\begin{document}

\title[Visual Regularity Theory for Elliptic Equations]{A Visual Approach to the Regularity Theory for Fully Nonlinear Elliptic Equations}

\author[H. A. Chang-Lara]{H\'ector A. Chang-Lara}
\address{Department of Mathematics, CIMAT, Guanajuato, Mexico}
\email{hector.chang@cimat.mx}

\begin{abstract}
In this set of notes, we revisit the regularity theory for uniformly elliptic equations without divergence structure.
\end{abstract}
\subjclass{(Primary) 35-01, (Secondary) 35B45, 35B65, 35J05, 35J15, 35J60, 35J70, 35K10, 35K55, 31B05}
\keywords{Elliptic regularity theory, Krylov-Safonov, Harnack inequalities.}

\maketitle

\tableofcontents

\section{Introduction}

The analysis of partial differential equations (PDEs) is a well-established discipline that treasures beautiful geometric insights. Although these concepts often come to life during seminar talks or discussions with experts in the field, students approaching these topics individually may encounter difficulties in extracting geometric meaning from the typical computations found in textbooks and research articles.

In these notes, our aim is to provide a pedagogical perspective that places a strong emphasis on the visual approach. Our journey begins with the fundamental case, that of the \textit{Laplacian}, and progressively leads to the \textit{Krylov-Safonov regularity theory}. Along the way, we engage the reader with a series of challenging problems that provide a valuable opportunity for hands-on learning. These problems are far from easy; many of them were actually taken from results in contemporary research articles; however, we expect that the techniques presented give some clues on their solutions. Toward the end, we offer hints for each one of these problems and provide references for those seeking a more comprehensive treatment. Finally, we also offer some discussions on some further developments and open problems in the field.

The main idea we hope to illustrate is how \textit{uniform ellipticity} enforces regularity on the solutions of elliptic equations through \textit{measure estimates}. The main example of this phenomenon is the \textit{mean value property} for \textit{harmonic functions}. In the next section, we will see how it implies an \textit{interior Hölder estimate} on any harmonic function, depending exclusively on its \textit{oscillation}.

To appreciate the significance of this result, consider an arbitrary uniformly bounded sequence of functions defined over a compact set. In contrast to numerical sequences, a bounded sequence of functions does not necessarily possess a convergent sub-sequence, even when each function within the sequence is smooth. However, as soon as we assume that the functions are harmonic, such limits are always guaranteed in the sense of uniform convergence by the Arzelà–Ascoli theorem and the Hölder continuity estimate for harmonic functions. This compactness plays an important role in numerous fundamental results in analysis of elliptic PDEs, such as existence theorems or the convergence of numerical schemes.

The analysis of PDEs is heavily based on establishing bounds for the modulus of continuity of a solution and its derivatives. These estimates serve as the cornerstones of what is known as the \textit{regularity theory}. 

The development of the regularity theory for elliptic equations has a rich history with numerous contributing authors. These notes delves into the regularity theory for uniformly elliptic equations in non-divergence form, a field that originated during 1980s and 1990s together with the theory of \textit{viscosity solutions}. This period witnessed significant contributions from notable figures such as Krylov, Safonov, Evans, Trudinger, and Caffarelli, among many others. Collectively, their work has given rise to what is now widely recognized as the Krylov-Safonov theory. For a detailed discussion on the history of the subject, we recommend \cite{MR1617413,MR1655532}, Chapter 1 in \cite{MR3243534}, and the notes at the end of Chapter 9 in \cite{MR737190}.

In this work, we do not provide a discussion of the rich variety of problems that motivate the study of elliptic equations. For this, we recommend the recent book \cite{MR4784613}, where a wide array of intriguing real-world problems, ranging from heat conduction, population dynamics, and random walks, are explored in depth. The book \cite{MR2309862} provides a visual approach that discusses applications for a wider range of PDEs.

The subjects we present in this article have been covered in various books \cite{MR737190,MR1351007,MR1406091,MR2777537,MR4560756}, notes \cite{mooney}, and articles \cite{MR1135923,MR1447056,MR2244602,MR2334822,MR3500837,MR3295593}.

These notes originated from a lecture offered by the author at the Learning Seminar on Analysis of PDEs for the Center for Mathematics at the University of Coimbra in July 2021. They are dedicated to Luis Caffarelli with deep gratitude and admiration.

\textbf{Acknowledgment:} The author was supported by CONACyT-MEXICO grant A1-S-48577.

\section{The Laplacian}

In this section, we show how the most basic form of uniform ellipticity leads to interior Hölder estimates. The main result is Theorem \ref{thm:int_hold_est}.

\subsection{Preliminaries}

In the following results, we use the oscillation of a function to measure how much it varies in a given set. For $u\colon \W\ss\R^n \to \R$ and  $E\ss\W$,
\[
\osc_E u:= \sup_{x,y\in E} |u(y)-u(x)| = \sup_E u - \inf_E u.
\]
For a fixed $x_0\in \W$, the oscillation of $u$ over\footnote{We will constantly use the following notation for open balls $B_r(x_0) := \{x\in \R^n \ | \ |x-x_0|<r\}$. By default $B_r := B_r(0)$.} $B_r(x_0)\cap \W$ can be used to determine the continuity of $u$ around $x_0$. In fact, $u$ is continuous at $x_0$ if and only if $\lim_{r\to 0^+}\osc_{B_r(x_0)\cap \W} u = 0$. Moreover, $u$ is uniformly continuous if and only if $\lim_{r\to0^+}\sup_{x_0\in\W} \osc_{B_r(x_0)\cap \W} u = 0$.

Any continuous non-decreasing function $\w\colon[0,\8)\to [0,\8)$ such that $\w(0)=0$ is said to be a \textit{modulus of continuity} for $u$ at $x_0$ if and only if $\w(r) \geq \osc_{B_r(x_0)\cap \W} u$. If we also have $\w(r) \geq \sup_{x_0\in\W} \osc_{B_r(x_0)} u$, then we just said that it is a modulus of continuity for $u$. The idea is that $\w$ quantifies the continuity of $u$, either punctually or globally.

\begin{figure}
    \centering
    \includegraphics[width=0.8\textwidth]{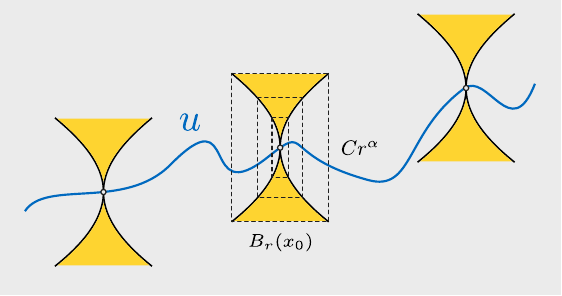}
    \caption{An $\a$-Hölder continuous function is one whose graph can be squeezed at every point $(x_0,u(x_0))$ between the graphs of $u(x_0) - C|x-x_0|^\a$ and $u(x_0) + C|x-x_0|^\a$ for some $C>0$. Equivalently, $\osc_{B_r(x_0)\cap \W} u \simeq \sup_{B_r(x_0)\cap \W} |u-u(x_0)|\leq Cr^\a$.}
    \label{fig:mod_cont}
\end{figure}

One important example of a modulus of continuity is the \textit{Hölder modulus}, it takes the form $\w(r) := Cr^\a$ for some $C>0$ and $\a\in(0,1]$, see Figure \ref{fig:mod_cont}. One of the most important features of this modulus of continuity is the scaling symmetry that it possesses, which leads to the \textit{diminish of oscillation} characterization.

\begin{lemma}[Characterization of Hölder continuity by diminish of oscillation]
\label{lem:dim_osc0}
Consider $\r,\theta\in(0,1)$, and $u\colon B_1 \to \R$ a bounded function. If $u$ satisfies the following diminish of oscillation identity for every radius $r \in(0,1]$
\[
\osc_{B_{\r r}} u\leq (1-\theta)\osc_{B_r} u,
\]
then $u$ admits the following Hölder modulus of continuity at the origin
\[
\osc_{B_r} u \leq Cr^\a\osc_{B_1} u,
\]
where $\a := \ln(1-\theta)/\ln \r$, $C := (1-\theta)^{-1}$, and $r \in(0,1]$.
\end{lemma}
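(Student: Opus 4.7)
The statement is purely a self-similarity argument: the hypothesis is a one-step contraction at scale $\rho$, and the conclusion repackages infinitely many iterations of it as a power-law bound. So the plan is to iterate, then to interpolate any radius between two consecutive powers of $\rho$.

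\textbf{Step 1 (iteration at the geometric scales).} Starting from the assumed inequality applied at $r=1$, $r=\rho$, $r=\rho^2$, etc., a straightforward induction yields
\[
\osc_{B_{\rho^k}} u \leq (1-\theta)^k \osc_{B_1} u \qquad \text{for every integer } k\geq 0.
\]
This is the only place the hypothesis enters, and it uses nothing beyond composition. Notice that this already gives the desired modulus along the discrete sequence $r=\rho^k$, since $(1-\theta)^k = (\rho^k)^{\ln(1-\theta)/\ln \rho} = (\rho^k)^\alpha$.

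\textbf{Step 2 (interpolation to arbitrary $r$).} Given $r\in(0,1]$, choose the unique nonnegative integer $k$ with $\rho^{k+1} < r \leq \rho^k$. Since $B_r \subseteq B_{\rho^k}$, oscillation is monotone under inclusion, so
\[
\osc_{B_r} u \leq \osc_{B_{\rho^k}} u \leq (1-\theta)^k \osc_{B_1} u.
\]
Now translate the geometric information $\rho^{k+1} < r$ into a bound on $(1-\theta)^k$. Taking logarithms (and remembering that both $\ln\rho$ and $\ln(1-\theta)$ are negative, so the inequalities flip) gives $k > \ln r/\ln\rho - 1$, and hence, since $0<1-\theta<1$ makes $t\mapsto (1-\theta)^t$ decreasing,
\[
(1-\theta)^k < (1-\theta)^{\ln r/\ln\rho - 1} = (1-\theta)^{-1}\, r^{\ln(1-\theta)/\ln\rho} = C\, r^\alpha.
\]
Combining this with the previous display proves the claim.

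\textbf{Where the work is.} There is no genuine analytic obstacle; the content of the lemma is the elementary scaling computation in Step 2. The only thing one must be careful about is the direction of the inequalities when dividing by the negative quantities $\ln\rho$ and $\ln(1-\theta)$. The constant $C=(1-\theta)^{-1}$ is exactly the price paid for replacing the discrete scale $\rho^k$ by an arbitrary intermediate radius $r$; taking the endpoint $r=\rho^k$ itself would remove it, confirming that the bound is sharp up to this factor.
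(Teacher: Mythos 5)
Your proof is correct and follows the same strategy as the paper's: induct to get the bound along the discrete scales $r=\rho^k$, then for general $r$ pick $k$ with $\rho^{k+1}<r\leq\rho^k$ and use monotonicity of the oscillation together with $\rho^\alpha=1-\theta$. The only cosmetic difference is that you pass through logarithms explicitly where the paper writes $(1-\theta)^k = C\rho^{\alpha(k+1)} \leq Cr^\alpha$ directly; the substance is identical.
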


We will usually find that $\theta$ and $\r$ are small positive constants. In particular, $1-\theta \geq \r$ would make $\a \in(0,1]$.

\begin{proof}
By an inductive argument we have that for any integer $k\geq 0$
\[
\osc_{B_{\r^k}} u \leq (1-\theta)^k\osc_{B_1} u.
\]
From this identity we can see why $\a$ was set to be $\ln(1-\theta)/\ln \r$. It is the exponent for which we have that $(1-\theta)^k = (\r^k)^\a$, for any $k$.

Let $r\in(0,1]$ and $k=\lfloor \ln r/\ln \r\rfloor\geq 0$ such that $\r^{k+1} < r \leq \r^k$. Then
\[
\osc_{B_r} u \leq \osc_{B_{\r^{k}}} u \leq (1-\theta)^{k}\osc_{B_1} u = C\r^{\a(k+1)}\osc_{B_1} u \leq Cr^\a\osc_{B_1} u.
\]
This concludes the proof.
\end{proof}

Given $\a\in(0,1]$, we measure the \textit{$\a$-Hölder semi-norm} of $u\colon \W\to \R$ over $E\ss\W$ by
\[
[u]_{C^{0,\a}(E)} := \sup_{x,y\in E} \frac{|u(y)-u(x)|}{|y-x|^\a} = \sup_{\substack{x_0\in E\\r>0}} r^{-\a}\osc_{B_r(x_0)\cap E} u.
\]
For $\b\in \R$, we denote the \textit{weighted Hölder semi-norm} in $\W$ as
\[
[u]_{C^{0,\a}(\W)}^{(\b)} := \sup_{B_r(x_0)\ss\W} r^\b[u]_{C^{0,\a}(B_{r/2}(x_0))}.
\]
Notice that $[u]_{C^{0,\a}(\W)}^{(\b)}$ controls the $\a$-Hölder modulus of $u$ at $x_0 \in \W$ in terms of the distance from $x_0$ to the boundary of $\W$: For $d:= \dist(x_0,\p\W)$,
\[
\sup_{r \in(0,d/2)} r^{-\a} \osc_{B_r(x_0)} u \leq [u]_{C^{0,\a}(B_{d/2}(x_0))} \leq d^{-\b}[u]_{C^{0,\a}(\W)}^{(\b)}.
\]

One could also consider for $\mu\in(0,1)$ a similar construction for $[u]_{C^{0,\a}(\W)}^{(\b)}$ replacing $B_{r/2}(x_0)$ with $B_{\mu r}(x_0)$. The following lemma shows that this quantity is comparable with the original one.

\begin{lemma}\label{lem:holder_weight}
Given $\a\in(0,1]$ and $\mu\in(0,1)$, there exists $C\geq 1$ such that for $u\in C(\W)$
\[
C^{-1}[u]_{C^{0,\a}(\W)}^{(\a)} \leq \sup_{B_\r(x)\ss\W} \r^\a[u]_{C^{0,\a}(B_{\mu\r}(x))} \leq C[u]_{C^{0,\a}(\W)}^{(\a)} .
\]
\end{lemma}

\begin{proof}
    Let $B_r(x_0)\ss \W$. We will show that for some $C\geq 1$, depending only on $\a$ and $\m$,
    \[
    r^\a[u]_{C^{0,\a}(B_{r/2}(x_0))} \leq C\sup_{B_\r(x)\ss\W} \r^\a[u]_{C^{0,\a}(B_{\m\r}(x))}.
    \]
    The proof for the lower bound can be obtained in a similar way and is omitted.
    
    For $y\in B_{r/2}(x_0)$ we have $B_{r/2}(y)\ss B_{r}(x_0)\ss\W$ and then
    \[
    r^\a[u]_{C^{0,\a}(B_{\mu r/2}(y))} = 2^\a (r/2)^\a[u]_{C^{0,\a}(B_{\mu r/2}(y))} \leq 2^\a\sup_{B_{\r}(x)\ss\W} \r^\a[u]_{C^{0,\a}(B_{\mu\r}(x))}.
    \]

    Let $N := \lfloor 2/\mu\rfloor +1$. Given $y_0,y_1\in B_{r/2}(x_0)$, consider $y_t := (1-t)y_0+ty_1$ so that
    \begin{align*}
    r^\a|u(y_0)-u(y_1)|
    &\leq r^\a\sum_{i=1}^N |u(y_{(i-1)/N})-u(y_{i/N})|\\
    &\leq \sum_{i=1}^N r^\a[u]_{C^{0,\a}(B_{\m r/2}(y_{(i-1/2)/N}))}\\
    &\leq 2^\a\sup_{B_{\r}(x)\ss\W} \r^\a[u]_{C^{0,\a}(B_{\m\r}(x))}\sum_{i=1}^N |y_{(i-1)/N}-y_{i/N}|^\a\\
    &\leq 2^\a N^{1-\a}\sup_{B_{\r}(x)\ss\W} \r^\a[u]_{C^{0,\a}(B_{\m\r}(x))}|y_0-y_1|^\a.
    \end{align*}
    We conclude after dividing by $|y_0-y_1|^\a$ and taking the supremum in $y_0,y_1\in B_{r/2}(x_0)$.  
\end{proof}

We will also consider the $L^p$-norms\footnote{For a Lebesgue measurable set $E\ss\R^d$, we denote its measure by $|E|$.}
\begin{align*}
    &\|u\|_{L^p(E)} := \1\int_E |u|^p\2^{1/p}, \qquad (p\in[1,\8)),\\
    &\|u\|_{L^\8(E)} := \inf\{M >0 \ | \ |\{x \in E \ | \ |u(x)|>M\}|=0\}.
\end{align*}
as well as its weighted version with exponent $\b\in \R$ 
\[\|u\|_{L^p(\W)}^{(\b)} :=  \sup_{B_r(x_0)\ss\W} r^\b\|u\|_{L^p(B_{r/2}(x_0))}.
\]

Finally, let us show an interpolation lemma.

\begin{lemma}\label{lem:inter}
Given $\a\in(0,1]$ and $\d\in(0,1/2)$ there exists $C\geq 1$ such that for $u \in C^1(B_1)$
\[
\|Du\|_{L^\8(B_{1/2})} \leq C\osc_{B_1}u + \d^\a[Du]_{C^\a(B_1)}.
\]
\end{lemma}

\begin{proof}
Given $x_0 \in B_{1/2}$ let
\[
\varphi(x) := \inf_{B_{\d}(x_0)} u + \d^{-2}\osc_{B_1}u\1\d^2-|x-x_0|^2\2.
\]
Let us see that there exists some $y_0 \in \argmin_{B_{\d}(x_0)}(u-\varphi)$, see Figure \ref{fig:inter}.

\begin{figure}
\centering
\includegraphics[width=0.8\textwidth]{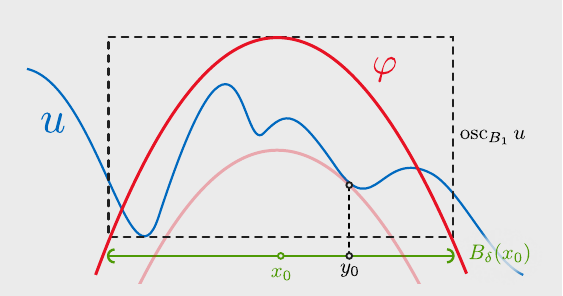}
\caption{The following construction could be considered a multidimensional version of Rolle's theorem. The paraboloid $\varphi$ satisfies that $\varphi(x_0)\geq u(x_0)$ and $\varphi\leq u$ on $\p B_{\d}(x_0)$. Either $\varphi$ touches $u$ at the center $x_0$ or $u$ must cross the graph of $\varphi$ in the interior of the ball $B_{\d}(x_0)$. Then a downwards translation of $\varphi$ becomes tangent with $u$ at some point $y_0 \in B_{\d}(x_0)$. At such contact point the gradient of $u$ equals the gradient of $\varphi$, which is bounded by $2\d^{-1}\osc_{B_1} u$.}
\label{fig:inter}
\end{figure}  

Notice that $m := \min_{\overline{B_{\d}(x_0)}}(u-\varphi)\leq 0$ because $\varphi(x_0)\geq u(x_0)$. If $m = 0$ we just take $y_0=x_0$. Otherwise, if $m<0$, then the non-empty set $\argmin_{\overline{B_{\d}(x_0)}}(u-\varphi)$ must be disjoint from $\p B_{\d}(x_0)$ where $\varphi \leq u$.

By applying the first derivative test at $y_0 \in \argmin_{B_{\d}(x_0)}(u-\varphi)$ we obtain that
\[
|Du(x_0)| \leq |Du(y_0)| + |Du(x_0)-Du(y_0)|\leq 2\d^{-1}\osc_{\W}u + \d^\a  [Du]_{C^{0,\a}(B_{1})}.
\]
The conclusion now follows by taking the supremum over $x_0\in B_{1/2}$.
\end{proof}

\subsection{Interior Hölder Estimate for the Laplacian}

We now have the ingredients to state the main regularity theorem of this section.

\begin{theorem}[Interior Hölder estimate - Laplacian]
    \label{thm:int_hold_est}
    Given $p\in[1,\8]\cap(n/2,\8]$, there exist $\a\in(0,1)$ and $C\geq 1$ such that for $\W\ss\R^n$ open, and $u\in C^2(\W)$ we have that
    \[
    [u]_{C^{0,\a}(\W)}^{(\a)} \leq C\1\osc_{\W}u + \|\D u\|_{L^p(\W)}^{(2-n/p)}\2.
    \]
\end{theorem}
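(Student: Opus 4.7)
The plan is to prove a perturbed diminish of oscillation and iterate it in the style of Lemma \ref{lem:dim_osc0}. A routine rescaling reduces the problem to the pointwise Hölder modulus at the unit scale: for $u\in C^2(B_1)$, $x_0\in B_{1/2}$, and $r\in(0,1/2]$,
\[
\osc_{B_r(x_0)} u \leq C\,r^\a\1\osc_{B_1} u + \|\D u\|_{L^p(B_1)}\2,
\]
since applying this to rescaled copies $\tilde u(y)=u(z+Ry)$ on balls $B_R(z)\ss\W$ and converting the pointwise modulus into a Hölder seminorm via the triangle inequality reproduces the weighted conclusion on $\W$. For each $x_0\in B_{1/2}$ and $r\in(0,1/2]$ I would decompose $u=v+w$ on $B_r(x_0)$, where $w$ solves the Poisson problem $\D w=\D u$ in $B_r(x_0)$ with $w=0$ on $\p B_r(x_0)$, so $v:=u-w$ is harmonic.

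Two classical facts then drive the estimate. First, the Poisson kernel representation for the harmonic part yields a universal $\theta_0\in(0,1)$ with $\osc_{B_{r/2}(x_0)} v \leq (1-\theta_0)\osc_{B_r(x_0)} v$, because for any $y\in B_{r/2}(x_0)$ the kernel is bounded below by $c_n r^{-(n-1)}$ over at least half of the boundary sphere, so $v(y)$ is strictly separated from both $\sup v$ and $\inf v$. Second, Green's representation for the correction $w$, together with $|G_{B_r}(x,y)|\leq c_n|x-y|^{2-n}$ and Hölder's inequality, gives
\[
\|w\|_{L^\8(B_r(x_0))} \leq C\bigl\||x-\cdot|^{2-n}\bigr\|_{L^{p'}(B_r(x_0))}\|\D u\|_{L^p(B_r(x_0))} \leq C\,r^{2-n/p}\|\D u\|_{L^p(B_r(x_0))},
\]
and the $L^{p'}$ integrability of $|x-\cdot|^{2-n}$ requires $(n-2)p'<n$, which rearranges to exactly $p>n/2$---this is the sole place where the threshold in the hypothesis enters.

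Combining these estimates with $\osc_S u \leq \osc_S v + 2\|w\|_{L^\8(B_r(x_0))}$ on any sub-ball $S\ss B_r(x_0)$ yields the perturbed diminish of oscillation
\[
\osc_{B_{r/2}(x_0)} u \leq (1-\theta_0)\osc_{B_r(x_0)} u + C\,r^{2-n/p}\|\D u\|_{L^p(B_r(x_0))}.
\]
Iterating along $r_k=2^{-k}$ with any exponent $\a$ satisfying $2^{-\a}>1-\theta_0$ and $\a<2-n/p$ (simultaneously feasible because $p>n/2$) then yields, via a geometric-sum argument analogous to Lemma \ref{lem:dim_osc0}, the pointwise bound from the first paragraph. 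The main obstacle I anticipate is the bookkeeping in this perturbed iteration: choosing $\a$ compatibly with both the harmonic contraction factor $1-\theta_0$ and the subcritical Laplacian exponent $2-n/p$, and verifying that the perturbation sums geometrically in $k$. A minor technicality is the logarithmic Green's function in dimension $n=2$, which introduces only harmless log factors absorbable into the constant.
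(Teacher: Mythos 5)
Your proposal is correct, but it takes a genuinely different route from the paper. You use the classical \emph{harmonic replacement} strategy: on each ball you solve the Dirichlet problem $\Delta w = \Delta u$, $w=0$ on $\partial B_r(x_0)$, bound the correction $w$ via the Green's function, and get the contraction factor $(1-\theta_0)$ for the harmonic part $v=u-w$ from the Poisson kernel. The paper instead derives a \emph{mean value inequality} $\fint_{B_1}u \leq u(0) + C\|(\Delta u)_+\|_{L^p}$ directly from the divergence theorem, upgrades it to a weak Harnack inequality by a geometric containment argument ($B_{1/3}\subset B_{2/3}(x_0)\subset B_1$), and proves diminish of oscillation by splitting on whether $\fint_{B_{1/3}}u\geq 1/2$ or not — no boundary value problem is ever solved and no Green's function appears. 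Both routes are rigorous and yield the same theorem, and your bookkeeping (the constraint $(n-2)p'<n\Leftrightarrow p>n/2$ for the $L^{p'}$-integrability of the kernel, the compatibility $\alpha<\min\{-\ln(1-\theta_0)/\ln 2,\ 2-n/p\}$, the $n=2$ logarithm) is sound. What each approach buys: yours is the shortest path when the Laplacian is the only operator in sight, since explicit representation formulas are available; the paper's is deliberately formula-free because its whole point is to isolate the \emph{measure-estimate} template (comparison of pointwise values to integral averages) that survives when the Laplacian is replaced by a non-divergence uniformly elliptic operator — there one has no Poisson kernel and no Green's function, and the mean value inequality is replaced by the ABP-type contact-set estimates of Section~3. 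So the paper's proof is not just a proof of this theorem; it is the warm-up for Krylov--Safonov, and your decomposition argument does not generalize in that direction.
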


Due to the definitions of the weighted semi-norms, it suffices to establish a Hölder estimate in balls contained in $\W$. Taking an appropriate system of coordinates, we can assume that the solution is defined over $B_1$, and the goal will be to establish an estimate on the Hölder modulus of continuity at the origin. Having in mind Lemma \ref{lem:dim_osc0}, we focus on showing that the oscillation of $u$ has a geometric decay in a sequence of concentric balls with geometrically decreasing radii. The mean value property plays a crucial role in this approach.

\begin{lemma}[Mean value property]
    \label{thm:mvf}
    Given $p\in[1,\8]\cap(n/2,\8]$, there exists $C\geq 1$ such that for $u\in C^2(B_1)$ we have that\footnote{Given $a\in \R$, we denote its positive and negative parts respectively as $a_+ = \max\{a,0\}$ and $a_-=\max\{-a,0\}$, such that $a=a_+-a_-$.}
    \[
    \fint_{B_1} u \leq u(0) + C\|(\D u)_+\|_{L^p(B_1)}.
    \]
\end{lemma}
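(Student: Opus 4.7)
The strategy is the classical route through spherical means, converting the statement into a Newtonian-potential bound and closing via Hölder. The sign hypothesis $(\Delta u)_+$ reflects that subharmonicity pushes the average \emph{above} $u(0)$, whereas negative parts of $\Delta u$ only help.

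First I would introduce the spherical mean $\phi(r) := \fint_{\partial B_r} u \, dS$ and differentiate using the divergence theorem to obtain
\[
\phi'(r) = \frac{1}{n\omega_n r^{n-1}} \int_{B_r} \Delta u \, dy,
\]
where $\omega_n = |B_1|$. Since $u\in C^2$ gives $\phi(0^+)=u(0)$, integrating in $r$ yields
\[
\phi(r) - u(0) = \int_0^r \frac{F(s)}{n\omega_n s^{n-1}} \, ds, \qquad F(s):=\int_{B_s}\Delta u\, dy \leq \int_{B_s}(\Delta u)_+\, dy.
\]
Next, expressing the volume average as $\fint_{B_1} u = n\int_0^1 r^{n-1}\phi(r)\, dr$, I subtract $u(0)$ and apply Fubini to interchange the order of integration in $(s,r,y)$, producing
\[
\fint_{B_1} u - u(0) \;\leq\; \int_{B_1} K(y)\,(\Delta u)_+(y)\, dy,
\qquad
K(y) := \int_{|y|}^1 \frac{1-s^n}{n\omega_n s^{n-1}}\, ds.
\]
Elementary computation shows that $K$ is comparable (up to dimensional constants) to the fundamental solution of the Laplacian: $K(y)\lesssim |y|^{2-n}$ for $n\geq 3$, $K(y)\lesssim \log(1/|y|)$ for $n=2$, and $K$ is bounded for $n=1$.

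Finally, I would close with Hölder's inequality using the conjugate exponent $p'=p/(p-1)$:
\[
\int_{B_1} K(y)\,(\Delta u)_+(y)\, dy \;\leq\; \|K\|_{L^{p'}(B_1)}\,\|(\Delta u)_+\|_{L^p(B_1)}.
\]
The condition $p > n/2$ is precisely what guarantees $\|K\|_{L^{p'}(B_1)} < \infty$: for $n\geq 3$, integrability of $|y|^{(2-n)p'}$ at the origin requires $(2-n)p'+n-1>-1$, equivalently $p'<n/(n-2)$, equivalently $p>n/2$; the cases $n=1,2$ are easier since $K$ is at most logarithmically singular.

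\textbf{Main obstacle.} No individual step is hard, but one must be attentive to two things: (i) carrying out the Fubini step cleanly so that the resulting kernel has precisely the Newtonian scaling, and (ii) checking the sharp integrability threshold $p>n/2$ against the exponent of $|y|$ after raising to the $p'$ power—this is where the hypothesis in the statement enters decisively. A minor bookkeeping issue is that the three ranges $n=1$, $n=2$, $n\geq 3$ produce slightly different kernels, but all are controlled by the same condition $p>n/2$, so the constant $C$ can be taken to depend only on $n$ and $p$.
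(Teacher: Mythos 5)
Your proof is correct and follows essentially the same route as the paper: both pass through the spherical mean $\phi(r) := \fint_{\partial B_r} u$, differentiate it via the divergence theorem to get $\phi'(r) = \tfrac{1}{n\omega_n r^{n-1}}\int_{B_r}\Delta u$, and exploit the integrability threshold $p>n/2$ to close with Hölder. The only difference is organizational: you integrate twice and collect everything via Fubini into an explicit truncated Newtonian kernel $K$ before applying Hölder once at the end, whereas the paper applies Hölder directly to $\fint_{B_r}\Delta u$ at the derivative stage and then integrates twice in $r$; both bookkeepings are valid and yield the same condition $p>n/2$.
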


\begin{figure}
    \centering
    \includegraphics[width=0.8\textwidth]{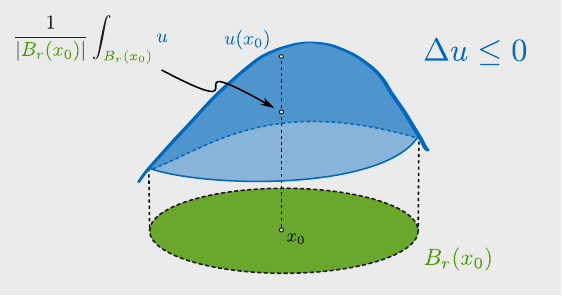}
    \caption{Given $\D u \leq 0$ in $\W$, we get that for any ball $B_r(x_0)$ contained in $\W$, the average of $u$ over $B_r(x_0)$ is less than the value at the center.}
    \label{fig:mvf}
\end{figure}

\begin{proof}
    Consider for $r\in(0,1)$, the average of $u$ over $\p B_r$. By the change of variable formula
    \[
    \fint_{\p B_r} u = \fint_{\p B_1} u(rx)dS(x).
    \]
    Then we compute its derivative with respect to $r$
    \[
    \frac{d}{dr} \fint_{\p B_r} u = \fint_{\p B_1} Du(rx)\cdot x dS(x) = \frac{r}{n}\fint_{B_1} \D u(rx) dx=\frac{r}{n}\fint_{B_r} \D u.
    \]
    The second equality follows by the divergence theorem. Given that $p\geq1$, Hölder's inequality allow us to get
    \[
    \frac{d}{dr} \fint_{\p B_r} u \leq Cr^{1-n/p}\|(\D u)_+\|_{L^p(B_1)}.
    \]

    Notice that the function $r\mapsto r^{1-n/p}$ is integrable in the interval $(0,1)$ if and only if $p>n/2$. By integrating the inequality from $r=0$ to $r=\r\in(0,1)$
    \[
    \fint_{\p B_\r} u \leq u(0) + C\r^{2-n/p}\|(\D u)_+\|_{L^p(B_1)}.
    \]
    We finally recover the desired formula by integrating from $\r=0$ to $\r=1$.
\end{proof}

The previous result can be extended to in any ball $B_r(x_0)$, namely
\[
\fint_{B_r(x_0)} u \leq u(x_0) + Cr^{2-n/p}\|(\D u)_+\|_{L^p(B_r(x_0))}.
\]
This can be shown by applying the lemma to $v(x) := u(rx+x_0)$.

The homogeneous case, $(\D u)_+ = 0$ or equivalently $\D u\leq 0$, gives a particular case of Jensen's inequality for concave functions, as depicted in Figure \ref{fig:mvf}. The functions that satisfy the inequality $\D u\leq 0$ are called \textit{super-harmonic}, and those that satisfy $\D u\geq 0$ are called \textit{sub-harmonic}. Although the naming convention might initially appear counterintuitive given the direction of the inequalities, it is actually based on the geometric characteristics of these functions. Specifically, concave functions are super-harmonic and their graphs curve \textit{upwards}. Conversely, convex functions are sub-harmonic functions and their graphs curve \textit{downwards}. See \hyperlink{ex:cvx}{Problem 2} for some further analogies.

If we apply Lemma \ref{thm:mvf} to a non-negative function defined in $B_1$, and over any possible ball $B_{2/3}(x_0)$ with $x_0\in B_{1/3}$ we deduce the following result. Figure \ref{fig:weak_har} illustrates the idea in the super-harmonic case. The general case can be recovered with a similar argument.

\begin{figure}[t]
    \centering
    \includegraphics[width=0.8\textwidth]{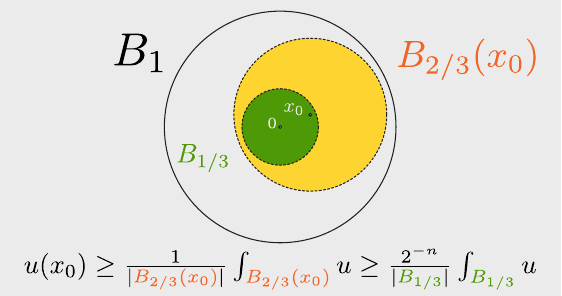}
    \caption{For any $x_0\in B_{1/3}$, we have the inclusions $B_{1/3}\ss B_{2/3}(x_0)\ss B_1$. We used that $u\geq 0$ in the second inequality.}
    \label{fig:weak_har}
\end{figure}

\begin{lemma}[Weak Harnack inequality - Laplacian]\label{cor:wharnack}
    Given $p\in[1,\8]\cap(n/2,\8]$, there exists $C\geq 1$ such that for $u\in C^2(B_1)$ non-negative
    \[
    \fint_{B_{1/3}}u \leq C\1\inf_{B_{1/3}} u+\|(\D u)_+\|_{L^p(B_1)}\2.
    \]
\end{lemma}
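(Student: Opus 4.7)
The plan is to apply the scaled Mean Value Property from Lemma \ref{thm:mvf} to every ball $B_{2/3}(x_0)$ with $x_0 \in B_{1/3}$, then exploit the inclusions and non-negativity illustrated in Figure \ref{fig:weak_har} to pass from the average on $B_{2/3}(x_0)$ to an average on $B_{1/3}$ controlled by $u(x_0)$.

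First, I would recall the rescaled version of the mean value inequality noted right after the proof of Lemma \ref{thm:mvf}: for every $x_0 \in B_{1/3}$,
\[
\fint_{B_{2/3}(x_0)} u \leq u(x_0) + C (2/3)^{2-n/p} \|(\Delta u)_+\|_{L^p(B_{2/3}(x_0))}.
\]
Since $B_{2/3}(x_0) \subset B_1$, the $L^p$ term on the right is bounded by $\|(\Delta u)_+\|_{L^p(B_1)}$, absorbing the fixed factor $(2/3)^{2-n/p}$ into a new constant.

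Next, I would use the key geometric observation $B_{1/3} \subset B_{2/3}(x_0)$ together with $u \geq 0$ to compare the two integrals:
\[
\int_{B_{1/3}} u \;\leq\; \int_{B_{2/3}(x_0)} u.
\]
Dividing by $|B_{1/3}|$ and noting that $|B_{2/3}(x_0)|/|B_{1/3}| = 2^n$, this becomes $\fint_{B_{1/3}} u \leq 2^n \fint_{B_{2/3}(x_0)} u$. Combining with the previous display yields, for every $x_0 \in B_{1/3}$,
\[
\fint_{B_{1/3}} u \;\leq\; 2^n u(x_0) + C' \|(\Delta u)_+\|_{L^p(B_1)},
\]
where $C'$ depends only on $n$ and $p$.

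Finally, since the left-hand side does not depend on $x_0$, I would take the infimum over $x_0 \in B_{1/3}$ on the right, which produces $\inf_{B_{1/3}} u$ on the first term and leaves the $L^p$ term untouched. Enlarging the constant to absorb $2^n$ gives the stated inequality. There is no real obstacle in this proof; the content is entirely contained in Lemma \ref{thm:mvf}, and the only step requiring care is making sure we apply the rescaled mean value inequality on balls centered at arbitrary $x_0 \in B_{1/3}$ (rather than the origin) and verifying that the inclusions $B_{1/3} \subset B_{2/3}(x_0) \subset B_1$ hold for all such $x_0$.
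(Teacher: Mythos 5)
Your proof is correct and follows exactly the approach the paper sketches (via the figure caption and the rescaled mean value inequality): apply Lemma \ref{thm:mvf} on $B_{2/3}(x_0)$ for $x_0 \in B_{1/3}$, use the inclusions $B_{1/3} \subset B_{2/3}(x_0) \subset B_1$ together with $u \geq 0$, and take the infimum over $x_0$. The details of the chain of inequalities and the volume-ratio constant $2^n$ are all handled correctly.
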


The following lemma is the key estimate that establishes a Hölder modulus of continuity by iterating the given estimate over balls of radii $3^{-k}$.

\begin{lemma}[Diminish of oscillation - Laplacian]
    \label{lem:dim_osc}
    Given $p\in[1,\8]\cap(n/2,\8]$, there exist $\d,\theta\in(0,1)$, such that for $u\in C^2(B_1)$ with $\|\D u\|_{L^p(B_1)}\leq \d$ we have that
    \[
    \osc_{B_1} u\leq 1 \qquad\Rightarrow\qquad \osc_{B_{1/3}} u \leq (1-\theta).
    \]
\end{lemma}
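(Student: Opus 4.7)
The plan is to reduce to a function taking values in $[0,1]$ and then apply the Weak Harnack Inequality (Lemma \ref{cor:wharnack}) via a standard dichotomy argument on whether $u$ is, on average over $B_{1/3}$, closer to its supremum or to its infimum.

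First, normalize: since $\osc_{B_1} u \leq 1$ is translation invariant, I would replace $u$ by $u - \inf_{B_1} u$, so that we may assume $0 \leq u \leq 1$ in $B_1$. Now split into two cases depending on whether $\fint_{B_{1/3}} u \geq 1/2$ or $\fint_{B_{1/3}} u < 1/2$, and in the second case work with $v := 1 - u$, which also satisfies $0 \leq v \leq 1$ and $\fint_{B_{1/3}} v > 1/2$. The key observation is that $\Delta v = -\Delta u$, so $\|(\Delta v)_+\|_{L^p(B_1)} = \|(\Delta u)_-\|_{L^p(B_1)} \leq \|\Delta u\|_{L^p(B_1)} \leq \delta$; hence, in either case, we have a non-negative function (call it $w$) with $\fint_{B_{1/3}} w \geq 1/2$ and $\|(\Delta w)_+\|_{L^p(B_1)} \leq \delta$.

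Next, apply Lemma \ref{cor:wharnack} to $w$ to obtain a constant $C\geq 1$, depending only on $n$ and $p$, such that
\[
\tfrac{1}{2} \leq \fint_{B_{1/3}} w \leq C\bigl(\inf_{B_{1/3}} w + \delta\bigr).
\]
Choosing $\delta := 1/(4C)$ gives $\inf_{B_{1/3}} w \geq 1/(4C)$. In the first case, this means $\inf_{B_{1/3}} u \geq 1/(4C)$, so
\[
\osc_{B_{1/3}} u \leq \sup_{B_{1/3}} u - \tfrac{1}{4C} \leq 1 - \tfrac{1}{4C}.
\]
In the second case, $\inf_{B_{1/3}}(1-u) \geq 1/(4C)$ yields $\sup_{B_{1/3}} u \leq 1 - 1/(4C)$, and the same bound on $\osc_{B_{1/3}} u$ follows since $u \geq 0$. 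Setting $\theta := 1/(4C)$ completes the argument.

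I do not expect a serious obstacle here, since the heart of the work is already packaged in the Weak Harnack Inequality. The only point that requires care is the sign bookkeeping when passing to $1-u$ in the second case, ensuring that the correct one-sided Laplacian $(\Delta w)_+$ is the one controlled by the hypothesis $\|\Delta u\|_{L^p(B_1)} \leq \delta$.
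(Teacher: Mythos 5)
Your proof is correct and follows essentially the same route as the paper: normalize to $0 \leq u \leq 1$, split according to whether $\fint_{B_{1/3}} u$ is at least $1/2$, apply the Weak Harnack Inequality to $u$ or $1-u$, and choose $\delta = \theta = 1/(4C)$. The sign bookkeeping for $(\Delta(1-u))_+ = (\Delta u)_-$ that you flag is the only subtle point, and you handle it correctly.
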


The hypothesis $\|\D u\|_{L^p(B_1)}\leq \d$, for $\d>0$ sufficiently small, can be considered as a perturbation of the Laplace equation $\D u = 0$ in $B_1$.

\begin{proof}
    Assume, without loss of generality, that $0\leq u \leq 1$. We now consider two possible alternatives: either $\fint_{B_{1/3}}u$ is $\geq 1/2$ or $\leq 1/2$. In the former case, we get by the weak Harnack inequality that $\inf_{B_{1/3}} u \geq 1/(2C) - \d$. Otherwise, we apply the same reasoning to $(1-u)$ to conclude that $\sup_{B_{1/3}} u \leq 1-(1/(2C) - \d)$. In any case, we get the desired diminish of oscillation if we choose $\d=\theta=1/(4C)$.
\end{proof}

Now the idea is to apply the previous lemma over a family of concentric balls in order to obtain a geometric decay of the oscillation as illustrated in Figure \ref{fig:dim_osc} (for some $\r\in (0,1/3]$).

\begin{lemma}[H\"older modulus of continuity - Laplacian]
\label{cor:dim_osc}
    Given $p\in[1,\8]\cap(n/2,\8]$, there exist $\a\in(0,1)$ and $C\geq 1$ such that for $u\in C^2(B_1)$
    \[
    \sup_{r\in(0,1)}r^{-\a}\osc_{B_r}u\leq C\1\osc_{B_1}u+\|\D u\|_{L^p(B_1)}\2.
    \]
\end{lemma}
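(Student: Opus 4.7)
The plan is to iterate Lemma \ref{lem:dim_osc} along a geometric sequence of concentric balls $B_{3^{-k}}$, and then invoke Remark \ref{rmk:dim_osc} to interpolate between the discrete scales. The delicate point, which I flag at the outset, is that Lemma \ref{lem:dim_osc} requires both $\osc_{B_1}u\leq 1$ \emph{and} a smallness condition on $\Delta u$, so each rescaling must preserve these two normalizations simultaneously.

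Let $\delta,\theta_0$ denote the constants from Lemma \ref{lem:dim_osc}, and set $K := \osc_{B_1}u + \delta^{-1}\|\Delta u\|_{L^p(B_1)}$. If $K=0$ then $u$ is constant and there is nothing to prove; otherwise I replace $u$ by $u/K$, reducing to the case $\osc_{B_1}u\leq 1$ and $\|\Delta u\|_{L^p(B_1)}\leq\delta$. I then claim by induction on $k\geq 0$ that $\osc_{B_{3^{-k}}}u\leq (1-\theta)^k$ for an appropriate $\theta\in(0,\theta_0]$ to be chosen shortly. The case $k=0$ is the normalization, so suppose the bound holds at scale $3^{-k}$ and define $v(x) := u(3^{-k}x)/(1-\theta)^k$. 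By the inductive hypothesis $\osc_{B_1}v\leq 1$, while the chain rule combined with the change of variables $y=3^{-k}x$ yields
\[
\|\Delta v\|_{L^p(B_1)} = \frac{3^{(n/p-2)k}}{(1-\theta)^k}\,\|\Delta u\|_{L^p(B_{3^{-k}})}\leq \frac{3^{(n/p-2)k}}{(1-\theta)^k}\,\delta.
\]
For this to remain $\leq\delta$ it suffices that $1-\theta\geq 3^{n/p-2}$. This is exactly where the hypothesis $p>n/2$ becomes essential: it guarantees $3^{n/p-2}<1$, so I may fix $\theta := \min(\theta_0,\,1-3^{n/p-2})>0$ and apply Lemma \ref{lem:dim_osc} to $v$, obtaining $\osc_{B_{1/3}}v\leq 1-\theta$, i.e., $\osc_{B_{3^{-(k+1)}}}u\leq (1-\theta)^{k+1}$.

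With the geometric decay in hand, Remark \ref{rmk:dim_osc} (applied with $\rho=1/3$) upgrades the estimate to $\osc_{B_r}u\leq Cr^\alpha$ for every $r\in(0,1]$, where $\alpha := -\ln(1-\theta)/\ln 3$ and $C = (1-\theta)^{-1}$. Undoing the normalization by multiplying through by $K$ and absorbing $\delta^{-1}$ into $C$ gives the desired bound. The main obstacle, as anticipated, is balancing the three scaling factors appearing under a zoom by $3^{-k}$: the $3^{-2k}$ coming from $\Delta$, the $3^{kn/p}$ coming from the $L^p$ volume change, and the $(1-\theta)^{-k}$ coming from dividing by the decayed oscillation. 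The precise range of admissible $\alpha$ is dictated exactly by this balancing, which forces the restriction $\alpha \le 2-n/p$ and motivates the choice of $\theta$.
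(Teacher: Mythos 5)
Your proof is correct and follows essentially the same diminish-of-oscillation iteration as the paper, with one cosmetic difference in how the scaling is balanced: you keep the radius $\rho=1/3$ fixed and shrink $\theta$ (to $\min(\theta_0,1-3^{n/p-2})$) so that the $L^p$-norm of $\Delta v$ stays below $\delta$, whereas the paper keeps $\theta=\theta_0$ from Lemma~\ref{lem:dim_osc} fixed and instead shrinks the radius to $\rho=\min(1/3,(1-\theta_0)^{1/(2-n/p)})$. Both choices enforce the same inequality $(1-\theta)\geq\rho^{2-n/p}$ that the scaling computation demands, so the two parametrizations are interchangeable; the only caveat worth making explicit in your version is that since you may have $\theta<\theta_0$, the conclusion of Lemma~\ref{lem:dim_osc} reads $\osc_{B_{1/3}}v\leq 1-\theta_0\leq 1-\theta$, which is what closes the induction.
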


\begin{figure}
    \centering
    \includegraphics[width=0.8\textwidth]{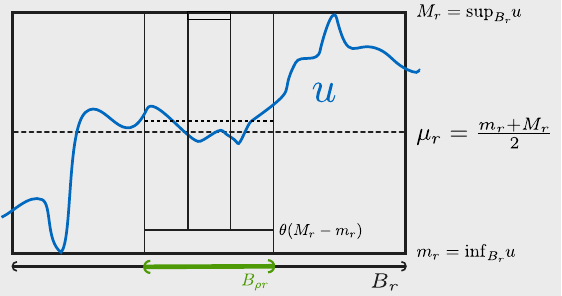}
    \caption{Diminish of oscillation: The lower bound of the solution in $B_{\r r}$ improves proportionally to the oscillation on $B_r$ if $\fint_{B_r} u\geq \m_r$. Otherwise the upper bound improves by the same amount. Then we repeat the argument from $B_{\r r}$ to $B_{\r^2 r}$ and so on.}
    \label{fig:dim_osc}
\end{figure}

\begin{proof}
    Our goal is to show that for some $\r,\theta \in (0,1)$ we see that $\osc_{B_{\r^k}} u \leq (1-\theta)^k$ holds true for any integer $k\geq 0$, so that Lemma \ref{lem:dim_osc0} settles the proof.
    
    Let $\d,\theta\in(0,1)$ as in Lemma \ref{lem:dim_osc}. Assume that $\osc_{B_1}u\leq 1$ and $\|\D u\|_{L^p(B_1)} \leq \d$, otherwise apply the following proof to the function $\bar u:= u/\1\osc_{B_1}u+\d^{-1}\|\D u\|_{L^p(B_1)}\2$ that satisfies the required conditions.
    
    Assume by induction that for $\r:= \min\{1/3,(1-\theta)^{1/(2-n/p)}\}$ and some $k\geq 1$, we have $\osc_{B_{\r^k}} u \leq (1-\theta)^k$. The base case $k=1$ is just the conclusion of Lemma \ref{lem:dim_osc}. Then consider $v(x) := (1-\theta)^{-k}u(\r^kx)$ such that
    \[
    \osc_{B_1} v =(1-\theta)^{-k}\osc_{B_{\r^k}} u \leq 1.
    \]
    Using that $\r \leq (1-\theta)^{1/(2-n/p)}$
    \[
    \|\D v\|_{L^p(B_1)} = (1-\theta)^{-k}\r^{k(2-n/p)}\|\D u\|_{L^p(B_{\r^k})} \leq \d.
    \]
    Then by Lemma \ref{lem:dim_osc}, $\osc_{B_\r} v \leq \osc_{B_{1/3}} v \leq (1-\theta)$. Equivalently,
    \[
    \osc_{B_{\r^{k+1}}} u = (1-\theta)^{k}\osc_{B_\r} v \leq (1-\theta)^{k+1},
    \]
    which now shows that the geometric decay is valid for every $k\geq 1$.
\end{proof}

The proof of Theorem \ref{thm:int_hold_est} now follows from Lemma \ref{cor:dim_osc} by an idea that is usually referred to in the literature as a \textit{covering argument}.

\begin{proof}[Proof of Theorem \ref{thm:int_hold_est}]
    Let $B_{r}(x_0)\ss\W$, $x,y \in B_{r/6}(x_0)$ distinct, and $\r=|x-y| \in (0,r/3)$. Notice that $B_{r/6}(x_0) \ss B_{r/3}(x) \ss B_{r/2}(x_0)\ss \W$. Consider $v(z) := u((r/3)z+x)$ such that as $z\in B_1$ we have that $(r/3)z+x \in B_{r/3}(x)$.
    
    By applying Lemma \ref{cor:dim_osc} to $v$
    \begin{align*}
    \frac{|u(y) - u(x)|}{|y-x|^\a} &\leq \r^{-\a}\osc_{B_{\r}(x)}u\\
    &= \r^{-\a}\osc_{B_{3\r/r}} v\\
    &\leq Cr^{-\a} \1\osc_{B_1} v + \|\D v\|_{L^p(B_1)}\2\\
    &= Cr^{-\a} \1\osc_{B_{r/3}(x)} u + r^{2-n/p}\|\D u\|_{L^p(B_{r/3}(x))}\2\\
    &\leq Cr^{-\a} \1\osc_{B_{r/2}(x_0)} u + r^{2-n/p}\|\D u\|_{L^p(B_{r/2}(x_0))}\2.
    \end{align*}
    By taking the supremum over $x,y \in B_{r/6}(x_0)$ and multiplying by $r^\a$
    \[
    r^\a[u]_{C^{0,\a}(B_{r/6}(x_0))} \leq C\1\osc_{B_{r/2}(x_0)} u + r^{2-n/p}\|\D u\|_{L^p(B_{r/2}(x_0))}\2.
    \]
    Finally, we conclude by taking the supremum in $B_r(x_0)$ and using Lemma \ref{lem:holder_weight}.
\end{proof}

\begin{remark}
    For $\a>0$, the function $u(x) := |\ln |x||^{-\a}$ over $B_{1/2}\ss\R^n$ is not Hölder continuous. We check that, 
    \begin{align*}
    |\D u(x)| &= \a|(\a+1)|\ln|x||^{-(\a+2)} - (n-2)|\ln|x||^{-(\a+1)}||x|^{-2} \leq  C|\ln |x||^{-(\a+1)}|x|^{-2}.
    \end{align*}
    Hence, we have $|\D u|\in L^{n/2}(B_{1/2})$ if $\a > \min\{2/n-1,0\}$
    \[
    \int_{B_{1/2}} |\D u|^{n/2} \leq C\int_0^{1/2} |\ln r|^{-(\a+1)n/2}\frac{dr}{r} \leq C\int_{\ln 2}^{\8} \ell^{-(\a+1)n/2}d\ell < \8. 
    \]
    This shows that the critical exponent $p=n/2$ cannot be reached in the Hölder estimate.
\end{remark}

\begin{remark}
    Notice how the Hölder estimate obtained from $\D u \in L^p(B_1)$ with $p\in[1,\8]\cap(n/2,\8]$ is related to the sequential application of Sobolev's embedding and Morrey's embedding. Specifically, the embeddings $W^{2,p}(B_1)\ss W^{1,q}(B_1) \ss C^{0,\a}(B_1)$ hold for $p \in (n/2,n)$, where $q = np/(n-p) > n$, and $\a = 1-n/q$. If instead $p\geq n$, we have that $W^{2,p}(B_1)\ss C^{0,\a}(B_1)$ for any $\a \in (0,1)$.
    
    The surprising fact is that, meanwhile the Sobolev/Morrey inequalities require to control each one of the coefficients in the Hessian, the Hölder estimate presented in Theorem \ref{thm:int_hold_est} achieves the same qualitative result with just a control on the Laplacian. This is remarkable given that the Laplacian, being the trace of the Hessian, retains only partial information on the Hessian.
\end{remark}

\begin{corollary}[Interior Higher Order Estimates - Laplacian]\label{cor:high_ord_est}
Given $p\in[1,\8]\cap(n/2,\8]$ and a non-negative integer $k$, there exist $\a\in(0,1)$ and $C\geq 1$, such that for $\W\ss\R^n$ open, and $u\in C^{k+2}(\W)$ we have that
\[
[D^ku]^{(k+\a)}_{C^{0,\a}(\W)} \leq C\1\osc_{\W}u + \|D^k(\D u)\|_{L^p(\W)}^{(k+2-n/p)}\2.
\]
\end{corollary}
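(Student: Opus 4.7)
The plan is induction on $k$, with Theorem \ref{thm:int_hold_est} serving both as the base case ($k=0$) and as the main tool at each inductive step. By rescaling any ball $B_r(x_0)\ss\W$ to the unit ball and running the same covering argument used at the end of the proof of Theorem \ref{thm:int_hold_est}, it is enough to establish the local version: for $u\in C^{k+2}(B_1)$,
\[
[D^k u]_{C^{0,\a}(B_{1/2})}\leq C\1\osc_{B_1}u + \|D^k(\D u)\|_{L^p(B_1)}\2.
\]

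The key algebraic observation is that $\D$ commutes with any partial derivative, so for any multi-index $\b$ with $|\b|=k$, the function $v:=D^\b u$ satisfies $\D v = D^\b(\D u)$. Applying Theorem \ref{thm:int_hold_est} to $v$ on a slightly shrunk ball such as $B_{3/4}$, and then unpacking the weighted semi-norm via a mild covering of $B_{1/2}$, yields
\[
[D^\b u]_{C^{0,\a}(B_{1/2})}\leq C\1\osc_{B_{3/4}}D^\b u + \|D^k(\D u)\|_{L^p(B_{3/4})}\2.
\]
The second term is already part of the target right-hand side, so the entire task reduces to bounding $\osc_{B_{3/4}}D^\b u$ by $\osc_{B_1}u + \|D^k(\D u)\|_{L^p(B_1)}$.

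The main obstacle is precisely this interior derivative estimate of order $k$, because Theorem \ref{thm:int_hold_est} only provides Hölder continuity of $u$ itself and not pointwise control of its derivatives. I would handle it by splitting $u = u_h + u_p$ inside $B_1$, where $u_p$ is a Newtonian potential of $\D u$ and $u_h$ is the resulting harmonic correction. Differentiating the mean value identity of Lemma \ref{thm:mvf} under the integral sign --- which is legitimate since the kernel is smooth in the interior of the averaging ball --- yields the classical derivative bound $\|D^\b u_h\|_{L^\8(B_{3/4})}\leq C\,\osc_{B_1} u_h$ for harmonic $u_h$. The derivatives of the Newtonian potential $u_p$ contribute norms of $D^j(\D u)$ with $j<k$, which can be absorbed into the two allowed terms $\osc_{B_1}u$ and $\|D^k(\D u)\|_{L^p(B_1)}$ by combining the induction hypothesis applied at lower orders with standard interpolation inequalities for Hölder semi-norms on nested balls, thereby closing the induction on $k$.
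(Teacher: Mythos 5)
Your opening reduction is essentially the paper's: reduce to a local statement via the covering argument, set $v := D^\b u$, observe $\D v = D^\b(\D u)$, and feed $v$ into Theorem \ref{thm:int_hold_est}. You also correctly isolate the real obstacle, namely that after this step the right-hand side contains $\osc D^\b u$ (equivalently $\|D^k u\|_{L^\8}$), which is not a priori controlled. The paper and your proposal then diverge. The paper does \emph{not} decompose $u$ into a harmonic part and a Newtonian potential. It proves the pointwise interpolation inequality
\[
\|Du\|_{L^\8(B_{r/2}(x_0))} \leq Cr^{-1}\osc_{\W}u + \d^\a r^{\a}\,[Du]_{C^{0,\a}(B_{3r/4}(x_0))}
\]
for every small $\d>0$, by sliding a concave paraboloid under $u$ in $B_{\d r}(x_1)$ (a multidimensional Rolle argument, cf.\ Figure \ref{fig:inter}), and then \emph{absorbs} the $[Du]_{C^{0,\a}}$ term on the left by taking $\d$ small, in the usual Schauder style. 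The decisive feature is that this introduces no new norms of $\D u$ whatsoever: only $\osc u$ and the Hölder seminorm of $Du$ appear, and the latter is absorbed.

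Your Newtonian-potential route $u = u_h + u_p$ has a concrete gap at exactly this point. Once $u_p$ is a Newtonian potential of (a cutoff of) $\D u$, estimating $\osc u_h$ and $\|D^\b u_h\|_{L^\8}$ requires a bound on $\osc u_p$, which is controlled by $\|\D u\|_{L^p(B_1)}$; and differentiating $u_p$ and distributing derivatives across the cutoff produces $\|D^j(\D u)\|_{L^p(B_1)}$ for all $0\le j\le k$, not just $j=k$. None of the terms with $j<k$ (in particular $j=0$) is allowed on the target right-hand side $\osc_{\W}u + \|D^k(\D u)\|_{L^p(\W)}^{(k+2-n/p)}$. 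Your proposed remedy — "induction hypothesis at lower orders plus standard interpolation of Hölder seminorms" — does not close this: the inductive estimate at order $j$ bounds $[D^j u]_{C^{0,\a}}$, which gives no control on $\|D^j(\D u)\|_{L^p}$, and a Gagliardo--Nirenberg interpolation of $\D u$ between $D^k(\D u)$ and $\D u$ still leaves $\|\D u\|_{L^p}$ as a base term. To salvage your approach you would need an additional lemma, e.g.\ a pairing argument (choose $\Psi$ compactly supported with $\div\Psi = \chi - \fint\chi$ and integrate $\D u$ against it) yielding $|\fint_{B_{1/2}}\D u| \le C\big(\osc_{B_1}u + \|D(\D u)\|_{L^p(B_1)}\big)$, followed by a Poincaré inequality for $\D u$; none of this is sketched, and it is considerably heavier than the paper's paraboloid interpolation, which sidesteps all of it.
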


Let us present the proof for $k=1$, as the general case follows from a similar reasoning.
    
\begin{proof}
    For any directional derivative $v := \p_e u$, with respect to a unit direction $e\in \p B_1$, we have that $\D v = \p_e(\D u)$ in $\W$. By applying the proof of Theorem \ref{thm:int_hold_est} to $v$ we get that for any $B_r(x_0)\ss\W$
    \begin{align*}
    r^\a[v]_{C^{0,\a}(B_{r/12}(x_0))} &\leq C\1\osc_{B_{r/4}(x_0)}v + r^{2-n/p}\|\p_e(\D u)\|_{L^p(B_{r/4}(x_0))}\2\\
    &\leq C\1\|Du\|_{L^\8(B_{r/4}(x_0))}+r^{-1}\|D(\D u)\|_{L^p(\W)}^{(3-n/p)}\2.
    \end{align*}
    By taking the supremum over $e \in \p B_1$ on the left-hand side and multiplying by $r$
    \begin{align}\label{eq:6}
    r^{1+\a}[Du]_{C^{0,\a}(B_{r/12}(x_0))} \leq C\1r\|Du\|_{L^\8(B_{r/4}(x_0))}+\|D(\D u)\|_{L^p(\W)}^{(3-n/p)}\2.
    \end{align}
    
    By applying Lemma \ref{lem:inter} to $w(x) = u((r/2)x+x_0)$ with some $\d\in(0,1/2)$ to be fixed sufficiently small
    \begin{align*}
    r\|Du\|_{L^\8(B_{r/4}(x_0))} &= \|Dv\|_{L^\8(B_{1/2})}\\
    &\leq C\osc_{B_1}v + \d^\a[Dv]_{C^{0,\a}(B_1)}\\
    &\leq C\osc_{\W} u + \d^\a r^{1+\a} [Du]_{C^{0,\a}(B_{r/2}(x_0))}\\
    &\leq C\osc_{\W} u + \d^\a [Du]_{C^{0,\a}(\W)}^{(1+\a)}.
    \end{align*}
    Substituting this interpolation in \eqref{eq:6}
    \[
    r^{1+\a}[Du]_{C^{0,\a}(B_{r/12}(x_0))} \leq C\1\osc_{\W} u + \d^\a [Du]_{C^{0,\a}(\W)}^{(1+\a)} + \|D(\D u)\|^{(3-n/p)}_{L^p(\W)}\2.
    \]
    Taking the supremum over $B_r(x_0)\ss\W$ and using Lemma \ref{lem:holder_weight}
    \[
    [Du]^{(1+\a)}_{C^{0,\a}(\W)} \leq C\1\osc_{\W} u + \d^\a [Du]^{(1+\a)}_{C^{0,\a}(\W)} + \|D(\D u)\|^{(3-n/p)}_{L^p(\W)}\2.
    \]
    By finally choosing $\d$ sufficiently small, the term $\d^\a[Du]^{(1+\a)}_{C^{0,\a}(\W)}$ gets absorbed by the left-hand side of the inequality.
\end{proof}

% \begin{exercise}[\hypertarget{ex:morrey}{\hyperlink{sol:morrey}{\textbf{Morrey's Inequality.}}} The identities in this problem could be considered as analogues of the mean value property for the first order operator $|Du|$.]
%     \item (Dimension 1) Show that for $p>1$, $u\in C^1((0,1))$, and $\a=1-1/p$
%     \[
%     [u]_{C^{0,\a}(0,1)} \leq \|u'\|_{L^p(0,1)}.
%     \]
%     \item (General dimension, integral identity) Prove that for some constant $C\geq 1$ it holds that for any $r\in(0,1)$ and $u \in C^1(B_1)$
%     \[
%     \fint_{B_r} |u-u(0)| \leq C\int_{B_r} \frac{|Du(x)|}{|x|^{n-1}}dx
%     \]
%     \item (General dimension, Hölder estimate) Given $p>n$, there exists some constant $C\geq 1$ such that for $u \in C^1(B_1)$
%     \[
%     \sup_{r\in(0,1/2)} r^{-(1-n/p)} \osc_{B_{r}} u \leq C\|Du\|_{L^p(B_1)}.
%     \]
% \end{exercise}

\begin{exercise}[\hypertarget{ex:harnack}{\hyperlink{sol:harnack}{\textbf{Harnack's inequality.}}}]
  \item Show that for a non-negative harmonic function $u\in C^2(B_1)$ and $r\in(0,1/3)$,
\[
\sup_{B_r}u\leq \1\frac{1-r}{1-3r}\2^n \inf_{B_r} u.
\]
  \item Use this previous inequality to show a diminish of oscillation and interior Hölder's estimate for harmonic functions considering as alternatives whether $u(0)$ is smaller or greater than $(\inf_{B_{1/4}}u + \sup_{B_{1/4}}u)/2$.

  % \item Show that for any compact and connected subset $K$ of an open set $\W \ss\R^n$, there exists some constant $C\geq 1$ such that for a non-negative harmonic function $u\in C^2(\W)$,
  %   \[
  %   \sup_{K}u\leq C\inf_{K} u.
  %   \]
\end{exercise}

\begin{remark}
    Other proofs of the Harnack inequality include: The one by Serrin in \cite{MR0081415} using barriers in two dimensions, also presented in \cite[Theorem 3.10]{MR737190}. The one by Li and Yau in \cite{MR0834612} using the Bernstein method, also presented in \cite[Lemma 1.32]{MR2777537}.
\end{remark}

\begin{exercise}[\hypertarget{ex:cvx}{\hyperlink{sol:cvx}{\textbf{Concave/Convex functions.}}} Many properties for sub/super-harmonic functions have easier counterparts for convex/concave functions. Recall that $u\colon \W\to \R$ is convex if and only if $\{(x,y) \in \W\times\R \ | \ y\geq u(x)\}$ is a convex set. It is concave if instead $-u$ is convex.]

\item (Harnack) For $u\in C(B_1)$ non-negative and concave, and $r\in (0,1)$
\[
\sup_{B_{1}}u\leq \frac{2}{1-r}\inf_{B_r}u.
\]

\item (Local maximum principle) For every $u\in C(B_1)\cap L^1(B_{1})$ non-negative and convex, and $r\in(0,1)$
\[
\sup_{B_r}u \leq \frac{2^n}{|B_1|(1-r)^n} \|u\|_{L^1(B_{1}\sm B_r)}.
\]
\end{exercise}

\begin{exercise}[\hypertarget{ex:weyl}{\hyperlink{sol:weyl}{\textbf{Weyl's lemma.}}} From Lemma \ref{thm:mvf} we get that a harmonic function $u$ must satisfy the mean value property
\[
u(x_0) = \fint_{B_r(x_0)}u.
\]
On the other hand, this identity is rigid enough to imply smoothness and harmonicity. We say that $u \in L^1(\W)$ is harmonic in a weak sense if and only if the mean value property holds for every ball $B_r(x_0)\ss\W$.]
\item Let $\eta \in C^\8_c(B_1)$ non-negative, radially symmetric, and such that $\int_{B_1} \eta = 1$; and let $\eta_\e(x) := \e^{-n}\eta(\e^{-1}x)$. Show that if $u \in L^1(\W)$ is weakly harmonic, then
\[
u = \eta_\e\ast u \text{ in } \W_\e := \{x\in \W  \ | \ B_\e(x)\ss\W\}.
\]
This implies that $u$ is smooth over $\W_\e$ for every $\e>0$.

\item Show that there exists $C\geq 1$ such that for every $u\in C^2(B_\e)$
\[
\lim_{\r\to0^+} \r^{-2}\fint_{B_\r} (u-u(0)) = C\D u(0).
\]
This shows that if $u$ is weakly harmonic, then it is also be harmonic in the classical sense.
\end{exercise}

\begin{exercise}[\hypertarget{ex:max_prin}{\hyperlink{sol:max_prin}{\textbf{Maximum principle.}}}]

\item Show that for every $p\in[1,\8]\cap(n/2,\8]$, there exists $C>0$ such that for $u\in C(\overline{B_1})$ with $u\in C^2(\{u>0\}\cap B_1)$ and $v = \max\{u,0\}$ we have that
\begin{align}
    \label{eq:7}
    v(0) \leq \fint_{\p B_1} v + C\|(\D u)_-\|_{L^p(\{u>0\}\cap B_1)}.
\end{align}

\item Show that for every $p\in[1,\8]\cap(n/2,\8]$, there exists $C>0$ such that for $\W\ss B_R$ and $u\in C^2(\W)\cap C(\R^n)$ with $u=0$ on $\R^n\sm \W$
\[
    \|u\|_{L^\8(\W)} \leq CR^{2-n/p}\|\D u\|_{L^p(\W)}.
\]

\end{exercise}

\begin{exercise}[\hypertarget{ex:bdry_reg}{\hyperlink{sol:bdry_reg}{\textbf{Boundary regularity.}}}]

\item Show that for every $p\in[1,\8]\cap(n/2,\8]$ and $\eta \in (0,1)$ there exist $\a\in(0,1)$ and $C\geq 1$ such that the following holds: Let $\W\ss \R^n$ such that
\[
\inf_{r\in(0,1)}\frac{|(\R^n \sm \W)\cap B_r|}{|B_r|} \geq \eta.
\]
Then for any $u\in C^2(\W)\cap C(B_1)$ with $u = 0$ in $B_1\sm \W$ and $\D u \in L^p(\W)$, we have that
\[
\sup_{r\in(0,1)} r^{-\a}\osc_{B_r} u \leq C\1\osc_{B_1} u + \|\D u\|_{L^p(\W)}\2.
\]

\item
Show that for every $p\in[1,\8]\cap(n/2,\8]$, $\eta \in(0,1)$, and $R>0$ there exist $\a\in(0,1)$ and $C\geq 1$ such such that the following holds: Let $\W\ss B_R\ss\R^n$ be such that
\[
\inf_{\substack{x_0 \in\p\W\\r\in(0,1)}} \frac{|(\R^n \sm \W)\cap B_r(x_0)|}{|B_r|} \geq \eta.
\]
Then for any $u\in C^2(\W)\cap C(\R^n)$ with $u = 0$ in $\R^n\sm \W$ and $\D u \in L^p(\W)$, we have that
\[
[u]_{C^{0,\a}(\W)} \leq C\|\D u\|_{L^p(\W)}.
\]
\end{exercise}

\begin{exercise}[\hypertarget{ex:lmp}{\hyperlink{sol:lmp}{\textbf{Local maximum principle.}}}]

\item Show that for every $p\in[1,\8]\cap(n/2,\8]$ and $\e>0$, there exist $\theta\in(0,1)$ and $M\geq 1$ such that the following holds: For $u\in C^2(B_1)$ with $\|u_+\|_{L^\e(B_1)} \leq 1$, and $\|(\D u)_-\|_{L^p(B_1)} \leq 1$, we have that
\[
    u(0)\geq M \qquad\Rightarrow\qquad \sup_{B_1}u\geq (1+\theta)u(0).
\]
\item Show that for every $p\in[1,\8]\cap(n/2,\8]$ and $\e>0$, there exists some $C\geq 1$ such that the following maximum principle holds for $u\in C^2(B_1)$
\[
\sup_{B_{1/2}} u_+ \leq C\1\|u_+\|_{L^\e(B_1)} + \|(\D u)_-\|_{L^p(B_1)}\2.
\]
This implies that the interior Hölder estimate can be strengthened to\footnote{For $\e\in(0,1)$, $\|\cdot \|_{L^\e}$ is not a norm because it is not sub-additive, however it is sub-additive up to a multiplicative constant.}
\[
[u]_{C^{0,\a}(\W)}^{(\a)} \leq C\1\|u\|_{L^\e(\W)}^{(-n/\e)} + \|\D u\|_{L^p(\W)}^{(2-n/p)}\2.
\]
\end{exercise}

\begin{exercise}[\hypertarget{ex:he}{\hyperlink{sol:he}{\textbf{Heat equation.}}} Let $H_t(x) := (4\pi t)^{-n/2}e^{-|x|^2/(4t)}$ be the heat kernel. For these problems it may be convenient to know that $u =u(x,t)\colon\R^n\times(-\8,0\rbrack\to \R$ defined from $f = f_t(x) \in C_c(\R^n\times(-\8,0\rbrack)$ by the global representation formula
\[
u(x,t) := \int_{-\8}^t (H_{t-s}\ast f_s)(x)ds,
\]
is the unique bounded solution of the heat equation $\p_t u - \D u = f$ in $\R^n\times(0,-\8\rbrack$, with $u(x,-T)=0$ for any $T>0$ such that $\spt f \ss \R^n\times(-T,0\rbrack$.]

\item (Mean value property) Show that there exist some non-negative kernels $K_1,K_2 \in C_c(B_1\times (-1,0])$, such that for any solution $u\in (C^2_x\cap C^1_t)(B_1\times (-1,0])$ of the heat equation $\p_t u - \D u = f$ in $B_1\times(0,1]$ it holds that
\[
u(0,0) = \iint_{B_1\times(-1,0]} (uK_1 + fK_2).
\]

\item (Weak Harnack) Show that for every $p,q \in[1,\8]$ such that $2>n/p+2/q$, there exist a radius $\r\in (0,1/4)$ and a constant $C\geq 1$ such that for any non-negative solution $u\in (C^2_x\cap C^1_t)(B_1\times (-1,0])$ of $\p_t u - \D u \geq f$ in $B_1\times(-1,0]$ it holds that
\[
\iint_{Q_-}u \leq C\1\inf_{Q_+} u + \|f_-\|_{L^q_t((-1,0]\to L^p_x(B_1))}\2,
\]
where $Q_-:=B_{\r}\times(-4\r,-3\r]$ and $Q_+:=B_{\r}\times(-\r,0]$. Similarly to the stationary problem, this estimate yields an interior Hölder estimate for the solution of a heat equation of the form
\[
\sup_{r \in(0,1)}r^{-\a}\osc_{B_r\times(-r^2,0]}u \leq C\1\osc_{B_1\times(-1,0]}u+\|f\|_{L^q_t((-1,0]\to L^p_x(B_1))}\2.
\]
\end{exercise}

% \noindent\textbf{6.3} (Harnack) Show that for every $p,q \in[1,\8]$ such that $2>n/p+2/q$, there exist a radius $\r\in (0,1)$ and a constant $C\geq 1$ such that if $u$ is a non-negative solution of $\p_t u = \D u + f$ in $B_1\times(-1,0]$ with $f \in L^q_t((-1,0]\to L^p_x(B_1))$, then
% \[
% \sup_{Q_-} u \leq C\1\inf_{Q_+} u + \|f\|_{L^q_t((-1,0]\to L^p_x(B_1))}\2
% \]
% where $Q_-:=B_{\r}\times(-1/2-\r/2,-1/2]$ and $Q_+:=B_{\r}\times(-\r/2,0]$.

% \noindent\textbf{6.4} (Local maximum principle) Show that for every $\e>0$ and $p,q \in[1,\8]$ such that $2>n/p+2/q$, there exist a radius $\r\in (0,1)$ and a constant $C\geq 1$ such that if $u$ is a non-negative solution of $\p_t u = \D u + f$ in $B_1\times(-1,0]$ then
% \[
% \sup_{Q_+} u \leq C\1\|u\|_{L^\e(Q_-)} + \|f_+\|_{L^q_t((-1,0]\to L^p_x(B_1))}\2
% \]
% where $Q_-:=B_{\r}\times(-1/2-\r/2,-1/2]$ and $Q_+:=B_{\r}\times(-\r/2,0]$.

\section{Uniformly Elliptic Equations}

In this section we present the Krylov-Safonov theory for non-linear uniformly elliptic equations. The main result is Theorem \ref{thm:ihe}.

\subsection{Preliminaries}

Let $\W\ss\R^n$ be open and $F=F(M,p,z,x)\in C(\R^{n\times n}_{\text{sym}}\times\R^n\times\R\times \W)$. This function defines a second-order nonlinear PDE of the form:
\[
F(D^2u(x), Du(x),u(x),x)= 0 \text{ for } x\in\W.
\]
To simplify the notation we may just write the expression as
\[
F(D^2u,Du,u,x)=0\text{ in $\W$}.
\]

In analogy with the definition of super-harmonic functions, we call super-solutions those functions that satisfy the inequality $F(D^2u, Du,u,x)\leq 0$. Similarly, sub-solutions are those that satisfy the inequality $F(D^2u, Du,u,x)\geq 0$.

\subsubsection{Linearizations}\label{sec:lin}

A common strategy to analyze non-linear equations consists of linearizing the problem at hand. In the second-order setting this lead us to a problem of the form\footnote{We assume the following inner product in $\R^{n\times n}$
\[
A:B := \tr(A^TB) = \sum_{i,j=1}^n a_{ij}b_{ij}.
\]}
\[
Lv := A:D^2v + b\cdot Dv + cv = f.
\]
Occasionally, we will use $L = A:M+b\cdot p+cz$ to refer to a linear operator as above.

The coefficients given by the \textit{diffusion matrix} $A$, the \textit{drift vector} $b$, the \textit{scalar} $c$, and the \textit{forcing term} $f$, are generally functions that depend on the spatial variable $x$. In such instances, we describe the operator as having variable coefficients. If $f=0$ we say that the equation is homogeneous. The simplest non-trivial case $A=I$, $b=0$ and $c=0$ makes $L=\D$.

We will consider three ways to obtain this linearization, depending on whether we look for estimates on $u$, the gradient of $u$, or the Hessian of $u$. In the following, we assume that $F\in C^1(\R^{n\times n}_{\text{sym}}\times\R^n\times\R\times \W)$ is differentiable and $u \in C^2(\W)$ satisfies
\[
F(D^2u,Du,u,x) = 0 \text{ in } \W.
\]

\noindent\textit{Linearization for the Original Function:} The first way follows from the fundamental theorem of calculus. By integrating the derivative $(d/dt)F(tD^2u,tDu,tu,x)$ from $t=0$ to $t=1$,
\begin{align*}
-F(0,0,0,x) &= F(D^2u,Du,u,x)-F(0,0,0,x)\\
&= \int_0^1 \frac{d}{dt}F(tD^2u,tDu,tu,x)dt\\
&= \sum_{i,j=1}^n\1\int_0^1\p_{m_{ij}}Fdt\2 \p_{ij}u + \sum_{i=1}^n\1\int_0^1 \p_{p_i}Fdt\2 \p_iu + \1\int_0^1 \p_zFdt\2u.
\end{align*}
We obtain in this way that $v_0=u$ satisfies the linear equation $Lv_0=f$, with coefficients given by
\begin{align*}
    A(x) &:= \int_0^1D_MF(tD^2u(x),tDu(x),tu(x),x)dt,\\
    b(x) &:= \int_0^1D_pF(tD^2u(x),tDu(x),tu(x),x)dt,\\
    c(x) &:= \int_0^1\p_zF(tD^2u(x),tDu(x),tu(x),x)dt,\\
    f(x) &:= -F(0,0,0,x).
\end{align*}
The functions $D_MF = (\p_{m_{ij}}F) \in \R^{n\times n}_{\text{sym}}$, $D_pF = (\p_{p_1}F,\ldots,\p_{p_n}F) \in \R^n$, and $\p_zF\in \R$ denote the partial derivatives of $F$ with respect to the Hessian variable $M=(m_{ij})\in \R^{n\times n}_{\text{sym}}$, the gradient variable $p=(p_1,\cdots,p_n)\in \R^n$, and $z\in \R$, respectively.

\noindent\textit{Linearization for a First Derivative:} A second way to obtain a similar linearization consists of taking a derivative of the equation in some unit direction $e\in \p B_1$. This leads to the following linear equation for $v_1 := \p_e u$
\[
D_M F:D^2v_1 + D_p F\cdot Dv_1 + \p_z F v_1 = - e \cdot D_xF.
\]
In other words, $v_1$ satisfies $Lv_1=f$ for the following coefficients
\begin{align*}
    A(x) &:= D_M F(D^2u(x),Du(x),u(x),x),\\
    b(x) &:= D_p F(D^2u(x),Du(x),u(x),x),\\
    c(x) &:= \p_z F(D^2u(x),Du(x),u(x),x)\p_e u(x),\\
    f(x) &:= - e \cdot D_xF(D^2u(x),Du(x),u(x),x).
\end{align*}

\noindent\textit{Linearization for a Second Derivative:} We can also obtain a linear equation for a pure second derivative $v_2 = \p_e^2u$ by differentiating the equation for $u$ twice, or equivalently the equation for $v_1 = \p_e u$ once. Keep in mind that information about mixed derivatives can be recovered from the polarization identity
\[
2\p_{e'}\p_e = \p_{e+e'}^2 - \p_e^2 - \p_{e'}^2.
\]

The derivative of the leading order term of the equation for $v_1$ is
\[
\p_e\1\sum_{i,j=1}^n \p_{m_{ij}}F\p_{ij}v_1\2 = \sum_{i,j=1}^n \p_{m_{ij}}F\p_{ij}v_2 + \sum_{i,j,k,l=1}^n (\p_{m_{ij}}\p_{m_{kl}}F)\p_{ij}v_1\p_{kl}v_1.
\]
The second term has a sign if we assume that $F$ is concave or convex in $M$.

We can repeat the same procedure to obtain linear equations for higher-order derivatives of $u$. However, once we have established Hölder estimates for $v_2$, higher-order estimates for $u$ become readily available thanks to the Schauder theory, not covered in this note and for which we recommend \cite[Chapter 2]{MR4560756}. This technique is known as \textit{elliptic bootstrapping}, before describing it we must understand the most important hypotheses, \textit{ellipticity} and \textit{uniform ellipticity}.

\subsubsection{Ellipticity and Uniform Ellipticity}

In the previous linearizations, the coefficients depend on the solution $u$ as well. However, under suitable hypotheses on the derivatives of $F$ and $u$, we can overlook this dependence and understand the equation in a general sense. For example, it may be the case that the coefficients are all measurable and bounded.

A common assumption in many applications is the ellipticity of the operator. This appears as a monotonicity property given by\footnote{We assume the following ordering in $\R^{n\times n}_{\text{sym}}$: $M\leq N$ if and only if $x\cdot Mx \leq x\cdot Nx$ for every $x \in \R^n$. Equivalently, the eigenvalues of $(N-M)$ are non-negative.} $D_MF\geq 0$. Uniform ellipticity requires instead a quantitative form of monotonicity with respect to the Hessian variable
\begin{align}
    \label{eq:ellipd}
    \l I \leq D_M F \leq \L I,
\end{align}
for some parameters $[\l,\L]\ss(0,\8)$.

We can also extend the notions of ellipticity and uniform ellipticity to non-differentiable operators. Ellipticity means that $F$ is monotone in the Hessian variable, which means that for $M,N\in \R^{n\times n}_{\text{sym}}$ and $(p,z,x)\in \R^n\times\R\times \W$ we have
\[
M\leq N\qquad\Rightarrow\qquad F(M,p,z,x) \leq F(N,p,z,x).
\]
The main consequence of this property is the \textit{comparison principle}.

\begin{figure}
    \centering
    \includegraphics[width=0.8\textwidth]{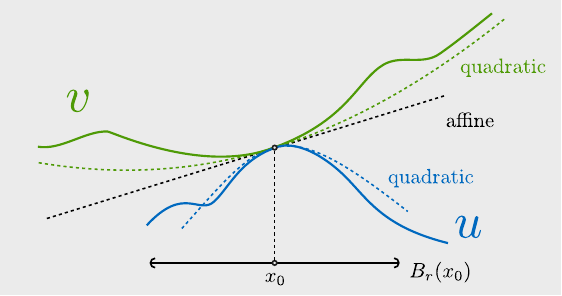}
    \caption{The graph of $u$ touches the graph of $v$ from below. At the contact point $x_0$ we obtain that $u(x_0)=v(x_0)$, $Du(x_0)=Dv(x_0)$ (first derivative test), and $D^2u(x_0)\leq D^2v(x_0)$ (second derivative test).}
    \label{fig:comp_prin}
\end{figure}

\noindent\textit{The Comparison Principle:} Given two functions $u,v\colon B_r(x_0)\to \R$, we say that $u$ touches $v$ from below at $x_0\in \W$ if $u\leq v$ in $B_r(x_0)$, and $u(x_0)=v(x_0)$ (Figure \ref{fig:comp_prin}). Whenever the functions are differentiable at $x_0$, the first derivative test implies that their gradients at the contact point are the same. If we additionally assume that the functions have second-order derivatives at $x_0$, then the second derivative test says that their Hessians are ordered, $D^2u(x_0)\leq D^2v(x_0)$. For $F$ elliptic it then must happen that
\[
F(D^2u(x_0),Du(x_0),u(x_0),x_0) \leq F(D^2v(x_0),Dv(x_0),v(x_0),x_0).
\]

The comparison principle motivates the notion of \textit{viscosity solutions}, a theory of weak solutions developed during the 1980s for first-order and second-order problems and widely used nowadays. Although they will not be used in these notes, it is useful to describe the general construction and keep it in mind for the next section about the contact set for a whole family of test functions (Section \ref{sec:abp}).

In the case of \textit{viscosity super-solutions}, this notion declares that $u \in C(\W)$ satisfies
\[
F(D^2u,Du,u,x)\leq 0 \text{ in the viscosity sense $\W$},
\]
by checking that for any $B_r(x_0)\ss\W$, and any \textit{test function} $\varphi\in C^2(B_r(x_0))$ touching $u$ from below at $x_0$, it holds that at the contact point
\[
F(D^2\varphi(x_0),D\varphi(x_0),\varphi(x_0),x_0)\leq 0.
\]
The ellipticity of $F$ means that this criterion is automatically satisfied if $u \in C^2(\W)$ is a super-solution in the classical sense.

\textit{Viscosity sub-solutions} require instead a test function $\varphi$ touching $u$ from above at some $x_0\in \W$, and checking that $F(D^2\varphi(x_0),D\varphi(x_0),\varphi(x_0),x_0)\geq 0$. The equality in the viscosity sense holds when the function is simultaneously a viscosity super-solution and a sub-solution. More details on this theory can be found in \cite{MR1118699,MR1351007,calder}.

The comparison principle allows us to compare sub and super-solutions. Consider $\W\ss\R^n$ be open and bounded, and $F \in C(\R^{n\times n}_{\text{sym}}\times\R^n\times\R\times\W)$ be elliptic and non-increasing in the $z$ variable, this last property is known as \textit{properness}. Given $u,v \in C^2(\W)\cap C(\overline{\W})$ we get that
\[
\begin{cases}
    F(D^2u,Du,u,x) > F(D^2v,Dv,v,x) \text{ in }\W,\\
    u\leq v \text{ in } \p\W,
\end{cases} \qquad\Rightarrow\qquad u< v \text{ in } \W.
\]

\begin{figure}
    \centering
    \includegraphics[width=0.8\textwidth]{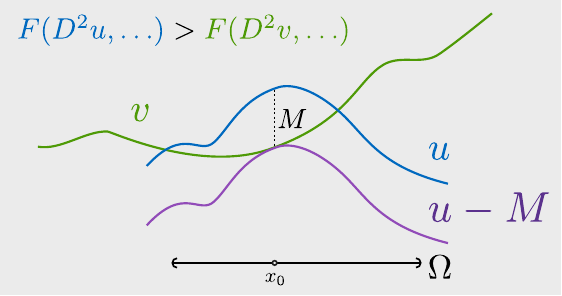}
    \caption{If the graphs of $u$ and $v$ cross or touch each other, we get that some downwards translation $u-M$ touches $v$ from below at some $x_0\in \W$. Given that $M\geq 0$ and $F$ is elliptic and proper, this leads to the contradiction $F(D^2u(x_0),Du(x_0),u(x_0),x_0) \leq  F(D^2u(x_0),Du(x_0),u(x_0)-M,x_0) \leq F(D^2v(x_0),Dv(x_0),v(x_0),x_0)$.}
    \label{fig:comp_prin2}
\end{figure}

In contrast to the direct reasoning for classical sub/super-solutions (Figure \ref{fig:comp_prin2}), the comparison principle for viscosity solutions is a highly non-trivial result that took several years to be fully established.

\noindent\textit{The Extremal Pucci Operators:} Let $F\in C^1(\R^{n\times n}_{\text{sym}}\times\R^n\times\R\times\W)$ be uniformly elliptic. By the fundamental theorem of calculus
\[
F(M,p,z,x)- F(N,p,z,x) = \1\int_0^1 D_MF(N+t(M-N),p,z,x)dt\2:(M-N)
\]
Using that $\l I \leq D_MF\leq \L I$, we deduce the identity
\begin{align}
\label{eq:ellip}
    \min_{\l I\leq A\leq \L I} A:(M-N) \leq F(M,p,z,x)-F(N,p,z,x)\leq \max_{\l I\leq A\leq \L I} A:(M-N).
\end{align}
This property is actually equivalent to $\l I \leq D_MF\leq \L I$ and does not requires differentiability on $F$. Therefore, it can be used to extend the notion of uniform ellipticity when $F$ is not differentiable.

The operators that appear on the left and right-hand side of \eqref{eq:ellip} are the extremal Pucci operators and can be computed as the following combinations in terms of the eigenvalues of the symmetric matrix $M$
\begin{align*}
\mathcal M^-_{\l,\L}(M) &:= \min_{\l I\leq A\leq \L I} A:M = \sum_{e\in \operatorname{eig}(M)} (\l e_+ - \L e_-),\\
\mathcal M^+_{\l,\L}(M) &:= \max_{\l I\leq A\leq \L I} A:M = \sum_{e\in \operatorname{eig}(M)} (\L e_+ - \l e_-).
\end{align*}

\begin{remark}
    \label{rmk:lin}
    The Pucci operators provide an indirect way to address equations of the form $A:D^2u=f$ without an explicit characterization of the coefficients given by the matrix-valued function $A$, in addition to uniform ellipticity. Indeed, for $f\in C(\W)$ we have that $u\in C^2(\W)$ satisfies
    \[
    \mathcal M^-_{\l,\L}(D^2u)\leq f \leq \mathcal M^+_{\l,\L}(D^2u) \text{ in $\W$}
    \]
    if and only if there exists $A\colon\W\to\R^{n\times n}_{\text{sym}}$ such that $\l I\leq A\leq \L I$ and $A:D^2u=f$ in $\W$.
\end{remark}

\begin{figure}
    \centering
    \includegraphics[width=0.8\textwidth]{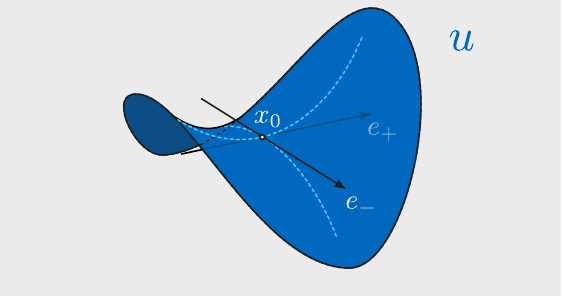}
    \caption{For the homogeneous equation $\mathcal M_{\l,\L}^-(D^2u)\leq 0 \leq \mathcal M_{\l,\L}^+(D^2u)$ we get that the graph of $u$ has a saddle at every point. Moreover, the sum of the positive eigenvalues of the Hessian and the sum of the negative eigenvalues of the Hessian are comparable: $(\l/\L)\sum_{e\in \operatorname{eig}(D^2u)} e_- \leq \sum_{e\in \operatorname{eig}(D^2u)} e_+ \leq  (\L/\l) \sum_{e\in \operatorname{eig}(D^2u)} e_-$.}
    \label{fig:eig_ue}
\end{figure}

The homogeneous equation $\mathcal M_{\l,\L}^-(D^2u)\leq 0$ states that the positive eigenvalues of the Hessian are controlled by the negative ones. Meanwhile, equation $\mathcal M_{\l,\L}^+(D^2u)\geq 0$ states that the negative eigenvalues are controlled by the positive ones (Figure \ref{fig:eig_ue}).

\begin{remark}\label{rmk:lin3}
Let $f\colon\W\to[0,\8)$. The equations
\begin{align}\label{eq:11}
\begin{cases}
\mathcal M^-_{\l,\L}(D^2u) \leq f \text{ in } \W,\\
\mathcal M^+_{\l,\L}(D^2u) \geq -f \text{ in } \W,
\end{cases}
\end{align}
are equivalent to $|A:D^2u| \leq f$, for some $A\colon\W\to\R^{n\times n}_{\text{sym}}$ with $\l I\leq A\leq \L I$. In particular, the equations appearing in Remark \ref{rmk:lin} imply \eqref{eq:11}

To justify this claim, consider $A^\pm\colon\W\to\R^{n\times n}_{\text{sym}}$ such that $A^\pm:D^2u = \mathcal M^\pm_{\l,\L}(D^2u)$. Then \eqref{eq:11} means that the intervals $[-f,f]$ and $[A^-:D^2u,A^+:D^2u]$ must intercept at least at one point. For some $t\in[0,1]$ we see that $A = (1-t)A^-+tA^+$ satisfies $|A:D^2u| \leq f$.
\end{remark}

\begin{remark}
\label{rmk:lin2}
For a uniformly elliptic operator $F=F(M,x)$, not necessarily differentiable, we can ``linearize'' the equation $F(D^2u,x)=0$ using the Pucci operators instead of the fundamental theorem of calculus. In fact, if we let $f(x) := -F(0,x)$, then any solution of $F(D^2u,x)=0$ satisfies
\[
\mathcal M^-_{\l,\L}(D^2u)\leq \underbrace{F(D^2u,x) - F(0,x)}_{=f} \leq \mathcal M^+_{\l,\L}(D^2u).
\]
We already pointed out that these inequalities are equivalent to $A:D^2u=f$ for some $A\colon\W\to\R^{n\times n}_{\text{sym}}$ such that $\l I\leq A\leq \L I$.
\end{remark}

If $F = F(M)$ is also independent of $x$, then we can find a linearization for a function approximating a directional derivative $v_1 = \p_e u$ in the following way:
\[
v_{1,h} := \frac{u_{h,e}-u}{h}, \qquad u_{h,e}(x) := u(x+he), \qquad h>0, \qquad e\in \p B_1.
\]
We notice that if $u$ is a solution of $F(D^2u)=0$ in $\W$, then also the translation $u_{h,e}$ is a solution of the same equation in $\W-he$. At the intersection we get that
\[
\mathcal M^-_{\l,\L}(D^2v_{1,h}) \leq \underbrace{\frac{F(D^2u_{h,e})-F(D^2u)}{h}}_{=0} \leq \mathcal M^+_{\l,\L}(D^2v_{1,h}). 
\]
If we assume that $u \in C^3(\W)$, then as we send $h\to 0^+$ we get $\mathcal M^-_{\l,\L}(D^2v_1) \leq 0 \leq \mathcal M^+_{\l,\L}(D^2v_1)$.

% We can obtain similar linearizations in the more general case of a uniformly elliptic operator $F=F(M,p,z,x)$, depending in all the variable and not differentiable. The terms of order less than two, which were expressed from the partial derivatives of $F$ with respect to $p$, $z$ and $x$, can be controlled from a Lipschitz assumption on $F$.

Finally, if $F=F(M)$ is uniformly elliptic and convex, we get a super-solution inequality for
\[
v_{2,h} := \frac{u_{h,e}+u_{-h,e}-2u}{h^2} \sim v_2 = \p_e^2u.
\]
By uniform ellipticity
\begin{align*}
\frac{h^2}{2}\mathcal M^-_{\l,\L}\1D^2v_{2,h}\2 &= \mathcal M^-_{\l,\L}\1\frac{D^2u_{h,e}+D^2u_{-h,e}}{2}-D^2u\2\\
&\leq F\1\frac{D^2u_{h,e}+D^2u_{-h,e}}{2}\2-F(D^2u)\\
&= F\1\frac{D^2u_{h,e}+D^2u_{-h,e}}{2}\2.
\end{align*}
By convexity and translation invariance
\[
F\1\frac{D^2u_{h,e}+D^2u_{-h,e}}{2}\2 \leq \frac{F(D^2u_{h,e})+F(D^2u_{-h,e})}{2} = 0.
\]
We conclude in this way that if $u\in C^4(\W)$, then $\mathcal M^-_{\l,\L}(D^2v_2)\leq 0$.

Let us finish this discussion by illustrating the bootstrapping technique for higher-order estimates. Assume that $u$ is a $C^3$-regular solution of the uniformly elliptic equation $F(D^2u)=0$ with $F$ smooth. Then, any directional derivative $v_1 = \p_eu$ satisfies a linear uniformly elliptic equation with Lipschitz coefficients. Schauder estimates (\cite[Chapter 2]{MR4560756} or \cite[Chapter 6]{MR737190}) then imply that $v_1$ is actually $C^{2,\a}$-regular (for any $\a\in(0,1)$) and then $u$ must be $C^{3,\a}$-regular. This now implies that the coefficients of the equation satisfied by $v_1$ are now even better, $C^{1,\a}$-regular, which then implies that the solution $v_1$ must be $C^{3,\a}$-regular, and $u$ is now promoted to be $C^{4,\a}$-regular. This procedure can be inductively continued, showing that $u$ is smooth.

% \begin{exercise}
%     \item For $f\in C(\W)$ and non-negative, $u\in C^2(\W)$ satisfies
% \[
% \begin{cases}
%     \mathcal M^-_{\l,\L}(D^2u) \leq f \text{ in $\W$},\\
%     \mathcal M^+_{\l,\L}(D^2u) \geq -f \text{ in $\W$},
% \end{cases}
% \]
% if and only if there exists $A\in C(\W\to\R^{n\times n}_{\text{sym}})$ such that $\l I\leq A\leq \L I$ and $|A:D^2u|\leq f$ in $\W$.
% \item Let $F=F(M,x)$ be uniformly elliptic and satisfy $F(0,x)=0$. Then 
% \end{exercise}

\subsubsection{Symmetries}

In our analysis for the Laplacian, we used some fundamental symmetries of the operator. These symmetries allowed us to establish an estimate in a specific (renormalized) configuration and then extend the result to translated and rescaled configurations. For example, the mean value property (Lemma \ref{thm:mvf}) was proved for the unit ball and then observed that it can be extended to arbitrary balls by a change of variables.

Let us give a few examples of the fundamental symmetries considered for second-order operators of the form $u\mapsto F(D^2u,Du,u,x)$. They have also been used in the previous considerations about linearizations.

\noindent\textit{Translation invariance:} Holds whenever $F = F(M,p,z)$ is independent of the $x$ variable. In this case, we find that if $f(x) := F(D^2u(x),Du(x),u(x))$ and $\bar u(x) := u(x+x_0)$ is a translation of $u$ for some $x_0\in \R^n$, then we can compute $F(D^2\bar u(x),D\bar u(x),\bar u(x))= f(x+x_0)$, with the same translation.

We also consider vertical translations of the graph of $u$ given by $\bar u(x) := u(x)+\theta$, for some $\theta\in \R$. In this case $F = F(M,p,x)$, independent of the $z$ variable, satisfies $F(D^2\bar u(x),D\bar u(x),x)=F(D^2u(x),Du(x),x)$. Whenever $F$ is proper, that is, nonincreasing in $z$, then we see that for $\theta\geq 0$, it holds
\[
F(D^2\bar u(x),D\bar u(x),\bar u(x),x) \leq F(D^2u(x),Du(x),u(x),x).
\]
This means that upward translations preserve super-solutions of proper equations. Similarly, downward translations preserve sub-solutions.

\noindent\textit{Homogeneity:} Holds if for any constant $c\geq 0$, we have $F(cM,cp,cz,x) = cF(M,p,z,x)$. In this case, we find that if $f(x) := F(D^2u(x),Du(x),u(x))$ and $\bar u(x) := cu(x)$ are a vertical scaling of $u$, then we can compute $F(D^2\bar u(x),D\bar u(x),\bar u(x))= cf(x)$, with the same scaling.

\noindent\textit{Dilation invariance:} Let $\a\in(0,1]$. The $\a$-H\"older continuity of a function $u\colon B_1\to \R$ at the origin can be characterized by the scaling transformations given by $u_r(x):= r^{-\a}u(rx)$, for $r>0$. We are interested in the dilations that arise when $r$ belongs to $(0,1)$, so the graph of $u_r$ consists in dilating the graph of $u$ horizontally with a factor $r^{-1}$, and vertically with a factor $r^{-\a}$ (Figure \ref{fig:scaling}).

\begin{figure}
    \centering
    \includegraphics[width=0.8\textwidth]{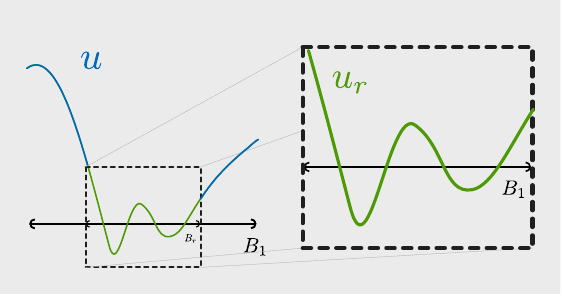}
    \caption{For $r\in (0,1)$, the scaling $u_r(x) = r^{-\a}u(rx)$ zooms-in the graph of $u$ by identifying the values of $u$ over the ball of radius $r$, to the values of $u_r$ over the ball of radius 1. In the figure above we illustrate the case when $\a=1$, which corresponds to a similar vertical stretching of the graph.}
    \label{fig:scaling}
\end{figure}
% Notice that $u$ admits the modulus of continuity $\w(x) = C|x|^\a$ at the origin, if and only if $\|u_r-u_r(0)\|_{L^\8(B_1)} \leq C$ for every $r\in (0,1)$. We expect then to obtain some control on the oscillation of the solution of an equation $F(D^2u,Du,u,x)=0$ in $B_1$, at arbitrarily small scales, for which we obtain the family of equations $F(r^{\a-2}D^2u_r,r^{\a-1}Du_r,r^\a u_r,rx) = 0$ in $B_1\ss B_{r^{-1}}$.

An operator $F=F(M,p,z,x)$, defined over $\W=\R^n$, is \textit{invariant by dilations} if for some exponents $\a,\b \in \R$ we have that
\[
F(r^{\a-2}M,r^{\a-1}p,r^\a z,rx) = r^\b F(M,p,z,x).
\]
In this case, if we let $f(x) := F(D^2u(x),Du(x),u(x),x)$ and $u_r(x) = r^{-\a}u(rx)$ a dilation of $u$, then we are also able to compute $F(D^2u_r(x),Du_r(x),u_r(x),x)=r^{-\b}f(rx)$ in terms of a scaling.

For example, the Laplacian is invariant by dilations for any pair of exponents $\a,\b \in \R$ such that $\b=\a-2$. This is the main consideration behind the change of variable arguments in the diminish of oscillation strategy. Notice also that any pure second-order linear operator with constant coefficients, as well as the Pucci extremal operators, is invariant by dilations, once again for $\b=\a-2$.

It is also useful to consider the notion of a whole family of operators preserved by some given symmetries. A family of operators $\mathcal F = \{F\}$ is invariant by dilations and in terms of the exponents $\a,\b \in \R$, if for any $F \in \mathcal F$ and $r\in(0,1)$, there exists $G \in \mathcal F$, such that $F(r^{\a-2}M,r^{\a-1}p,r^\a z,rx) = r^\b G(M,p,z,x)$.

This previous notion is useful for characterizing a family of linear operators $L = A:M+b\cdot p+cz$ with the hypothesis of uniform ellipticity $\l I\leq A\leq \L I$, and $\|b\|_{L^{q_1}(\R^n)},\|c\|_{L^{q_0}(\R^n)}\leq \L$, for $q_1\geq n$ and $q_0 \geq n/2$. In general, each of these operators is not invariant by dilations because each term has a different scaling invariance. However, the whole family is invariant by dilations.

All these symmetries will be combined in order to simplify some arguments in our proofs. The idea starts by showing a result in a normalized domain (usually the unit ball), with some normalized hypothesis (for example, oscillation bounded by one, or a sufficiently small forcing term for the equation). Then the hypotheses and conclusions can be extended to other configurations by applying a combination of translations and rescalings.

\subsection{Interior Hölder Estimates for Uniformly Elliptic Equations}

Here is the main result of this section and the fundamental estimate of the Krylov-Safonov regularity theory.

\begin{theorem}[Interior Hölder estimate - uniformly elliptic]
\label{thm:ihe}
Given $[\l,\L]\ss(0,\8)$, there exist $\a\in(0,1)$ and $C\geq 1$, such that the following holds: Let $\W\ss\R^n$ open and $F\in C(\R^{n\times n}_{\text{sym}}\times \W)$ be uniformly elliptic with respect to $[\l,\L]$. Given $u\in C^2(\W)$ and $f(x) := F(D^2u,x)-F(0,x)$ we have
\[
[u]^{(\a)}_{C^{0,\a}(\W)} \leq C\1\osc_{\W}u + \|f\|^{(1)}_{L^n(\W)}\2.
\]
\end{theorem}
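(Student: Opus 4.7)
The plan is to mirror the blueprint of Theorem \ref{thm:int_hold_est}, with one serious substitution. First, by Remark \ref{rmk:lin2}, I would reduce the hypothesis on $F$ to the two Pucci inequalities
\[
\mathcal P^-_{\l,\L}(D^2u) \leq f \leq \mathcal P^+_{\l,\L}(D^2u) \text{ in } \W,
\]
so that the problem becomes $F$-free and depends only on $[\l,\L]$. Once a diminish of oscillation lemma in the spirit of Lemma \ref{lem:dim_osc} is available for non-negative functions satisfying a Pucci super-solution inequality, Lemma \ref{lem:dim_osc0} together with the covering argument at the end of the proof of Theorem \ref{thm:int_hold_est} will then finish the proof essentially verbatim.

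The main obstacle, and what must replace the elementary mean value calculation of Lemma \ref{thm:mvf}, is a \textbf{weak Harnack inequality} for Pucci super-solutions: I need $\r\in(0,1)$ and $C\geq 1$ such that every non-negative $u\in C^2(B_1)$ with $\mathcal P^-_{\l,\L}(D^2u)\leq |g|$ in $B_1$ satisfies
\[
\fint_{B_\r} u \leq C\1\inf_{B_\r} u + \|g\|_{L^n(B_1)}\2.
\]
There is no integration-by-parts shortcut, so my plan is to build this through three non-trivial ingredients: (i) the \emph{Alexandroff-Bakelman-Pucci (ABP) maximum principle}, bounding $\sup u_-$ by the $L^n$-norm of the right-hand side restricted to the contact set where $-u$ coincides with its convex envelope; (ii) a first measure estimate, obtained by applying ABP against a suitable quadratic barrier, showing that $|\{u\leq M\}\cap B_\r|$ has positive density whenever $\inf_{B_\r} u \leq 1$; and (iii) a Calderón-Zygmund-type stacked cube iteration that amplifies the density estimate into the weak Harnack bound above. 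The $L^n$-norm of $g$ enters naturally because $n$ is the exponent for which the ABP integral has the correct scaling.

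Granting the weak Harnack, the diminish of oscillation proceeds by the same dichotomy as in Lemma \ref{lem:dim_osc}. Assuming $0\leq u\leq 1$ and $\|f\|_{L^n(B_1)}\leq \d$, if $\fint_{B_\r} u \geq 1/2$ then the weak Harnack applied to $u$ gives $\inf_{B_\r} u \geq 1/(2C)-\d$. Otherwise, the identity
\[
\mathcal P^-_{\l,\L}(D^2(1-u)) = -\mathcal P^+_{\l,\L}(D^2u) \leq -f
\]
says that $1-u$ is itself a Pucci super-solution with right-hand side bounded by $|f|$, so the weak Harnack applied to $1-u$ yields $\sup_{B_\r} u \leq 1-1/(2C)+\d$. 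Choosing $\d,\theta$ small produces $\osc_{B_\r} u \leq 1-\theta$.

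The iteration step of Lemma \ref{cor:dim_osc} then transfers with only cosmetic changes: the rescaling $v(x):=(1-\theta)^{-k}u(\r^k x)$ preserves the Pucci inequalities under the dilation exponents $\b=\a-2$ described in Section \ref{sec:lin}, and the rescaled forcing satisfies
\[
\|g_v\|_{L^n(B_1)} = (1-\theta)^{-k}\r^k\|f\|_{L^n(B_{\r^k})},
\]
so the inductive smallness condition closes by taking $\r := \min(1/3, 1-\theta)$ and $\a := \ln(1-\theta)/\ln\r \in (0,1)$. The covering argument at the end of Theorem \ref{thm:int_hold_est} finally delivers the weighted interior estimate, with the weight $1 = 2-n/n$ matching the exponent $(1)$ in $\|f\|^{(1)}_{L^n(\W)}$.
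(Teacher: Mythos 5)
Your plan is correct in outline, and the outer scaffolding matches the paper exactly: the reduction via the Pucci inequalities (Remark \ref{rmk:lin2}), the rescaling computation $\|g_v\|_{L^n(B_1)} = (1-\theta)^{-k}\r^k\|f\|_{L^n(B_{\r^k})}$ so that $\r\leq 1-\theta$ closes the induction, and the final covering argument. The genuine divergence is in the weak-Harnack machinery. You propose the classical route: the Alexandroff-Bakelman-Pucci maximum principle over the contact set of $-u$ with its convex envelope, a barrier comparison to get the initial measure estimate, and a Calder\'on-Zygmund stopping-time iteration. The paper instead proves the measure estimate (Lemma \ref{lem:abp}) and its localization (Lemma \ref{lem:loc}) directly by sliding a family of test functions (concave paraboloids, resp.\ radially decaying cusps) under $u$ and applying the area formula to the contact map $T(x_0)=x_0-D\varphi_0^{-1}(Du(x_0))$; it then iterates via a Whitney-type dyadic decomposition of $\{u>1\}\cap\overline{Q_1}$ (Lemma \ref{lem:dimdist}) rather than a Calder\'on-Zygmund selection, although the CZ alternative is explicitly noted in a remark. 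Both routes are standard, both produce the density estimate with the correct $L^n$ scaling, and your choice is a legitimate substitute; the sliding-paraboloid version avoids having to isolate ABP as a standalone theorem first and is the more self-contained path when one already aims for measure estimates.

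One imprecision to flag: in the Krylov-Safonov setting the weak Harnack inequality only gives $L^\e$-integrability for some small universal $\e>0$, i.e.\ a decay of the distribution $|\{u\geq\m\}\cap Q_1|\lesssim\m^{-\e}(\inf u+\|g\|_{L^n})^\e$. The $L^1$ form $\fint_{B_\r}u\leq C(\inf_{B_\r}u+\|g\|_{L^n(B_1)})$ you wrote does not follow when $\e<1$ and $\inf u+\|g\|$ is small, since $(\inf u+\|g\|)^\e$ can dominate $(\inf u+\|g\|)$. This does not break your dichotomy: if $0\leq u\leq 1$ and $\fint_{B_\r}u\geq 1/2$, Chebyshev gives $|\{u>1/4\}\cap B_\r|\geq |B_\r|/4$, and the distribution bound then forces $\inf u$ away from zero. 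The paper's proof of Lemma \ref{lem:dim_osc2} sidesteps this by phrasing the dichotomy directly in terms of the measure of $\{u\geq 1/2\}\cap Q_{1/(3\sqrt n)}$ rather than the average of $u$, and you should do the same.
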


Due to the definition of the weighted norms using balls $B_r(x_0)\ss\W$, and the considerations presented in Remark \ref{rmk:lin}, Remark \ref{rmk:lin3}, and Remark \ref{rmk:lin2}, we can drop the operator given by $F$ and assume instead that $u\in C^2(B_r(x_0))$ and $f\in L^n(B_r(x_0))$ non-negative satisfy
\[
\begin{cases}
\mathcal M^-_{\l,\L}(D^2u) \leq f \text{ in } B_r(x_0),\\
\mathcal M^+_{\l,\L}(D^2u) \geq -f \text{ in } B_r(x_0).
\end{cases}
\]

The symmetries of the Pucci operators allow us to extend the strategy for the Laplacian in this setting. Let us recapitulate the main lemmas:
\begin{itemize}
    \item Mean value formula: Lemma \ref{thm:mvf}.
    \item Weak Harnack inequality: Lemma \ref{cor:wharnack}.
    \item Diminish of oscillation: Lemma \ref{lem:dim_osc}.
    \item Hölder modulus of continuity: Lemma \ref{cor:dim_osc}.
\end{itemize}

For each of the steps above, we present the following analogous results:
\begin{itemize}
    \item Measure estimates: Section \ref{sec:abp2}.
    \item Weak Harnack inequality: Section \ref{sec:dim_dist} and Lemma \ref{lem:wharnack}.
    \item Diminish of oscillation: Lemma \ref{lem:dim_osc2}.
    \item Hölder modulus of continuity: Lemma \ref{thm:h}.
\end{itemize}

Here is the statement for last step in the list.

\begin{lemma}[Hölder modulus of continuity - uniformly elliptic]
\label{thm:h}
Given $[\l,\L]\ss(0,\8)$, there exist $\a\in(0,1)$ and $C\geq 1$, such that for $u\in C^2(B_1)$, and $f\in L^n(B_1)$ non-negative with
\[
\begin{cases}
\mathcal M^-_{\l,\L}(D^2u) \leq f \text{ in } B_1,\\
\mathcal M^+_{\l,\L}(D^2u) \geq -f \text{ in } B_1,
\end{cases}
\]
we have that
\[
\sup_{r\in(0,1)}r^{-\a}\osc_{B_r} u \leq C\1\osc_{B_1} u+\|f\|_{L^n(B_1)}\2.
\]
\end{lemma}

The second to last step in our list is given by the following lemma. This result will be the main technical goal of this section.

\begin{lemma}[Diminish of oscillation - uniformly elliptic]\label{lem:dim_osc2}
Given $[\l,\L]\ss(0,\8)$, there exist $\d,\r,\theta\in(0,1)$, such that for $u\in C^2(B_1)$, $f\in L^n(B_1)$ non-negative with $\|f\|_{L^n(B_1)}\leq \d$, and
\[
\begin{cases}
\mathcal M^-_{\l,\L}(D^2u)\leq f \text{ in } B_1,\\
\mathcal M^+_{\l,\L}(D^2u)\geq -f \text{ in } B_1,
\end{cases}
\]
we have that
\[
\osc_{B_1} u\leq 1 \qquad\Rightarrow\qquad \osc_{B_{\r}} u \leq (1-\theta).
\]
\end{lemma}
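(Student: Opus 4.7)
The plan is to mimic, step by step, the argument in Lemma \ref{lem:dim_osc} for the Laplacian, with the two-sided Pucci bound playing the role of the smallness of $\D u$. After subtracting $\inf_{B_1}u$, we may assume $0\leq u\leq 1$ in $B_1$. A crucial structural observation is that the two-sided Pucci condition is symmetric under the reflection $u\mapsto 1-u$: since $\mathcal P^-_{\l,\L}(-M) = -\mathcal P^+_{\l,\L}(M)$, the function $v := 1-u$ is still non-negative on $B_1$ and satisfies $\mathcal P^-_{\l,\L}(D^2v)\leq f$ and $\mathcal P^+_{\l,\L}(D^2v)\geq -f$. Hence any inequality we manage to prove for non-negative super-solutions of $\mathcal P^-_{\l,\L}(D^2 w)\leq f$ is automatically available for both $u$ and $1-u$.

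The essential ingredient I would invoke is a weak Harnack inequality for Pucci super-solutions, analogous to Lemma \ref{cor:wharnack}: there should exist $\r\in(0,1)$ and $C\geq 1$, depending only on $n$, $\l$, $\L$, such that any non-negative $w\in C^2(B_1)$ with $\mathcal P^-_{\l,\L}(D^2w)\leq f$ in $B_1$ satisfies
\[
\fint_{B_\r} w \leq C\1\inf_{B_\r} w + \|f\|_{L^n(B_1)}\2.
\]
Granting this, I would run the Laplacian-style dichotomy verbatim. Either $\fint_{B_\r} u\geq 1/2$, and the weak Harnack applied to $u$ yields $\inf_{B_\r}u \geq 1/(2C)-\d$; or $\fint_{B_\r}(1-u) > 1/2$, and the same estimate applied to $1-u$ yields $\sup_{B_\r}u \leq 1-1/(2C)+\d$. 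Choosing $\d = \theta = 1/(4C)$ gives $\osc_{B_\r}u \leq 1-\theta$ in either case.

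The main obstacle is therefore the weak Harnack inequality itself, which is the hard analytic heart of the Krylov-Safonov theory. Unlike Lemma \ref{cor:wharnack}, where the bound followed almost immediately from the mean value property via the divergence theorem, the Pucci operators admit no explicit fundamental solution from which averages can be read off. The standard route, foreshadowed by the reference to Section \ref{sec:abp}, passes through the Alexandrov-Bakelman-Pucci estimate, a careful analysis of the contact set of $u$ with a family of concave paraboloids to extract a first one-scale measure estimate of the form $|\{u\leq M\}\cap B_\r|\geq \mu|B_\r|$, and then a Calder\'on-Zygmund dyadic cube argument iterating this into a power decay $|\{u\geq M^k\}\cap B_\r|\leq (1-\mu)^k|B_\r|$, which integrates to the weak Harnack bound. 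Once this machinery is available, the diminish-of-oscillation conclusion of Lemma \ref{lem:dim_osc2} is a routine adaptation of the Laplacian argument, with the smallness $\|f\|_{L^n(B_1)}\leq\d$ absorbing the error $C\d$ introduced by the $L^n$ term.
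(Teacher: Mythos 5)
Your proposal matches the paper's proof in structure: normalize $u$ to $[0,1]$, set up a dichotomy, and apply the weak Harnack inequality to whichever of $u$ or $1-u$ is large on a half, using precisely the Pucci symmetry $\mathcal P^-_{\l,\L}(-M) = -\mathcal P^+_{\l,\L}(M)$ that you identify. Your sketch of where the weak Harnack itself comes from (ABP-type contact-set measure estimate, then dyadic/Calder\'on–Zygmund iteration into a power decay of the distribution) is also the route taken in the paper (Lemma~\ref{lem:abp}, Lemma~\ref{lem:loc}, Corollary~\ref{cor:mest}, Lemma~\ref{lem:dimdist}).

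One detail is worth tightening. You invoke the weak Harnack in the $L^1$-average form $\fint_{B_\r} w \leq C\bigl(\inf_{B_\r} w + \|f\|_{L^n(B_1)}\bigr)$, mirroring Lemma~\ref{cor:wharnack} for the Laplacian. For Pucci super-solutions what is actually established (Lemma~\ref{lem:wharnack}) is the weak-$L^\e$ bound $\sup_{\m>0}\m^\e|\{u\geq\m\}\cap Q_1|\leq C\bigl(\min_{\overline{Q_3}}u+\|\cdot\|\bigr)^\e$ with a possibly small universal $\e\in(0,1)$. Since $\e$ can be less than one, this does not control $\fint u$ in general, and indeed the $L^1$-form weak Harnack can fail for Pucci super-solutions when the ellipticity ratio $\L/\l$ is large, so your stated lemma is not literally available. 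Your argument survives because you have already normalized $u\leq 1$: either replace the dichotomy $\fint_{B_\r}u \gtrless 1/2$ by the measure dichotomy $|\{u\geq 1/2\}\cap Q_{1/(3\sqrt n)}| \gtrless \tfrac12|Q_{1/(3\sqrt n)}|$, which is exactly what the paper does and feeds directly into the distributional estimate at the level $\m=1/2$; or keep your average dichotomy and note that $\fint u\geq 1/2$ together with $u\leq 1$ forces a super-level set of $u$ to occupy a fixed fraction of the ball, which again is what the distributional weak Harnack needs. Either patch is routine, and with it your proof coincides with the paper's.
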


Lemma \ref{lem:dim_osc2} implies Lemma \ref{thm:h} in the same way that Lemma \ref{lem:dim_osc} implies Lemma \ref{cor:dim_osc}; using the scaling invariance of the equation and Lemma \ref{lem:dim_osc0} characterizing the Hölder modulus of continuity. As it was already noticed, the hypotheses in Theorem \ref{thm:ihe} imply those of Lemma \ref{thm:h} in an appropriated system of coordinates. The conclusion of Lemma \ref{thm:h} then implies the conclusion of Theorem \ref{thm:ihe} using the covering argument in the proof of Theorem \ref{thm:int_hold_est}

The rest of the strategy towards the proof of Lemma \ref{lem:dim_osc2} requires novel insights. Some technical steps, with proofs resembling those presented in the previous section for the Laplacian, will be postponed to Section \ref{sec:sum} in order to highlight the new ideas. We also encourage the reader to reconstruct those arguments as exercises.

\subsubsection{The Probabilistic Insight}

The original proof of Theorem \ref{thm:ihe}, due to Krylov and Safonov \cite{MR525227}, was originally presented in terms of stochastic processes. In this section, we will briefly describe this perspective which motivates the measure estimates in the coming sections.

Consider $u\in C^1(\R^n)$ satisfying the first-order equation $b \cdot Du = 0$ for some vector-valued function $b$. The curve $X=X(t)\in C^1([0,\8)\to \R^n)$ is \textit{characteristic} for the linear and homogeneous PDE if and only if $X'=b(X)$, in which case we obtain by the chain rule that $u(X(t))$ remains constant along the trajectory.

We can find a similar connection with second-order PDEs once we allow for random dynamics. The simplest case appears when $X(t)$ is a Brownian motion and $u$ is a harmonic function. By Itô's calculus we get that the expected value $\mathbb E[u(X(t))]$ is also constant\footnote{And more precisely, one can also say that $u(X(t))$ is a \textit{martingale}.}. We can use this property to compute $u(x)$ by considering a Brownian motion starting at $x$ up to the stopping time when it hits the region where the values of $u$ are prescribed.

Consider $\W\ss\R^n$ open and bounded and $g\in C(\R^n\sm\W)$. If we define $u(x)$ as the expected value of $g(X(\t))$ where $X(t)$ is a Brownian motion starting at $x$, and $\t$ is the exit time from $\W$, then $u$ satisfies $\D u=0$ in $\W$ with the complementary condition $u = g$ in $\R^n\sm \W$\footnote{The continuity of $u$ across the boundary $\p\W$, and the uniqueness of this characterization, depend on the regularity of the boundary $\p\W$; Lipschitz would be enough.}.

\begin{figure}
    \centering
    \includegraphics[width=0.8\textwidth]{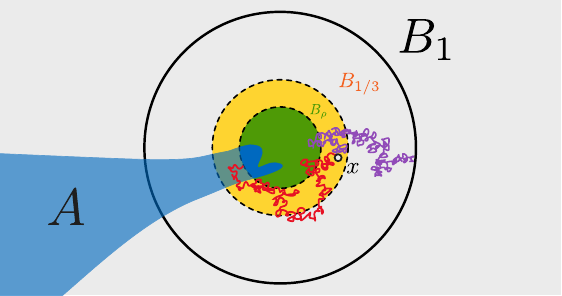}
    \caption{The Harnack inequality provides a lower bound for the probability that a Brownian motion starting at $x \in B_{1/3}$, hits $A\cap B_\r$ before exiting $B_1$. This bound depends linearly on the measure of $A\cap B_\r$, but not on the starting point or other geometric characteristics of the set $A$. The figure above illustrates two realizations of the Brownian motion.}
    \label{fig:prob}
\end{figure}

Consider $\r\in(0,1/3)$, a closed set $A$, and let $v_{\r,A}(x)$ be the probability that a Brownian motion starting at $x$ reaches the set $A\cap \overline{B_\r}$ before exiting $B_1$. Therefore
\[
\D v_{\r,A} = 0 \text{ in } B_1\sm (A\cap \overline{B_\r})
\]
with complementary values
\[
\begin{cases}
v=1 \text{ on } A\cap \overline{B_\r},\\
v=0 \text{ on } \R^n\sm B_1.
\end{cases}
\]
The probabilistic Harnack inequality (Figure \ref{fig:prob}) states that for some constant $c>0$ depending only on the dimension, it holds that
\[
\inf_{B_{1/3}} v_{\r,A} \geq c|A\cap B_\r|.
\]

In particular, if $u$ is a non-negative super-harmonic function and we let $A=\{u\geq 1\}$, we get by the comparison principle that $u\geq v_{\r,A}$ and then
\[
\inf_{B_{1/3}} u \geq c|\{u\geq 1\}\cap B_\r|.
\]

Notice that the same inequality can also be deduced from Lemma \ref{cor:wharnack} and the Markov inequality. Moreover, the proof of the diminish of oscillation (Lemma \ref{lem:dim_osc}) can also be achieved if we instead consider the alternatives whether $|\{u\geq 1/2\}\cap B_\r|$ is at least half the measure of $B_\r$, or is less than half the measure of the same ball.  

This probabilistic approach to the Harnack principle can actually be extended to general diffusions. Given that the identities now relate the distribution of $u$ with point-wise values, these inequalities are known as \textit{measure estimates}. They extend the mean value property for uniformly elliptic equations.

\subsubsection{The Geometric Insight}

Another significant insight into the theory was provided by the Minkowski problem. It consists of constructing a convex body with prescribed Gaussian curvature over its boundary. In the case where the boundary is parameterized by the graph of the convex function $u$ we find the PDE
\[
\det(D^2u) = K(x)(1+|Du|^2)^{(n+2)/2}.
\]
In this context, the leading term is referred to as the Monge-Ampère operator and has important connections with the field of optimal transport. Notice that it remains elliptic under the assumption of convexity; however, it lacks uniform ellipticity.

A key geometrical observation for these problems is the Aleksandrov lemma \cite{aleksandrov1960,MR0214900}. Similarly to the mean value property, it allows one to control the point-wise values of $u$ in terms of the integral of a second-order operator. Notice, however, that in this scenario the natural integration theorem is no longer the divergence theorem, but the change of variables formula.

\begin{lemma}
    There exists a constant $C\geq 1$ depending only on the dimension such that the following holds: Let $\W\ss\R^n$ be an open and convex set and $u\in C^2(\W) \cap C(\overline{\W})$ be a convex function with $u=0$ on $\p\W$, then
    \[
    \sup_{x\in \W}\frac{|u(x)|^n}{\dist(x,\p\W)} \leq C\diam(\W)^{n-1}\int_\W\det(D^2u)
    \]
\end{lemma}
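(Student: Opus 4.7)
The plan is to bound $|u(x_0)|^n$ for an arbitrary fixed $x_0 \in \W$ using the Monge--Amp\`ere change-of-variables: for a convex $u \in C^2$ the area formula gives $|Du(E)| \leq \int_E \det(D^2 u)$ for any measurable $E \ss \W$. So it suffices to exhibit a set of gradient values of $u$, attained at interior points, whose Lebesgue measure beats $|u(x_0)|^n/(\diam(\W)^{n-1}\dist(x_0,\p\W))$ up to a dimensional constant. The auxiliary tool is the cone anchored at $x_0$.

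First I would introduce the cone $w\colon \overline{\W}\to \R$ with vertex $(x_0,u(x_0))$ and boundary values $0$ on $\p\W$, namely the function whose graph is the convex hull of $\{(x_0,u(x_0))\}$ and $\p\W \times \{0\}$. Since $u$ is convex and vanishes on $\p\W$, tracing along any ray from $x_0$ to $\p\W$ shows $u \leq w$ throughout $\overline{\W}$, with equality at $x_0$. The sub-differential of $w$ at $x_0$ is then exactly $\{p \in \R^n : p\cdot(y-x_0) \leq |u(x_0)| \text{ for all } y\in\p\W\} = |u(x_0)|\,K^\circ$, where $K := \overline{\W}-x_0$ and $K^\circ$ is its polar body. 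The key transfer is $\partial w(x_0) \ss Du(\W)$: for $p \in \partial w(x_0)$, the convex function $g(y) := u(y) - p\cdot y$ satisfies $g(y)-g(x_0) = -[u(x_0) + p\cdot(y-x_0)] \geq -w(y) = 0$ on $\p\W$, so $g$ attains its minimum over $\overline{\W}$ at an interior point $x^* \in \W$, at which the first-order condition gives $Du(x^*) = p$. Combined with the area formula,
\[
|u(x_0)|^n \, |K^\circ| \;=\; |\partial w(x_0)| \;\leq\; |Du(\W)| \;\leq\; \int_\W \det(D^2 u).
\]

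The heart of the argument is the geometric lower bound $|K^\circ| \geq c_n/(R^{n-1} d)$, where $R := \diam(\W)$ and $d := \dist(x_0,\p\W)$. Since $0 \in K \ss B_R(0)$ we immediately get $B_{1/R}(0) \ss K^\circ$. Letting $y_0 \in \p\W$ be a closest boundary point to $x_0$ and $e := (y_0-x_0)/d$, the convexity of $\W$ furnishes a supporting hyperplane to $\W$ at $y_0$ orthogonal to $e$, so after translation $K \ss \{z : e\cdot z \leq d\}$ and hence $(1/d)\,e \in K^\circ$. Convexity of $K^\circ$ now gives $K^\circ \supset \operatorname{conv}\!\bigl(B_{1/R}(0) \cup \{(1/d)e\}\bigr)$, which contains an $n$-dimensional cone with apex $(1/d)e$ and $(n-1)$-dimensional circular base of radius $1/R$ in the hyperplane through the origin perpendicular to $e$; its volume is of order $R^{-(n-1)}d^{-1}$. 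Substituting and taking the supremum over $x_0 \in \W$ finishes the proof.

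The main obstacle is precisely this polar volume estimate: the crude inclusion $B_{1/R}(0) \ss K^\circ$ alone only yields $|K^\circ| \geq c_n R^{-n}$, which is too weak whenever $d \ll R$, so it is essential to exploit that the nearest boundary point forces $K^\circ$ to have a vertex far from the origin, at distance $1/d$ along the direction $e$. The remaining pieces (constructing $w$, proving $u \leq w$, transferring the sub-differential via the minimizer argument, and applying the area formula) are routine manipulations for convex functions in Monge--Amp\`ere theory.
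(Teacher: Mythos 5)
Your proof is correct and takes essentially the same route as the paper's suggested solution to Problem 9(2): both arguments exhibit a cone of gradient slopes — the convex hull of the ball $B_{|u(x_0)|/\diam(\W)}$ and the single slope of magnitude $|u(x_0)|/\dist(x_0,\p\W)$ pointing toward the nearest boundary point — that must lie inside $Du(\W)$, and then estimate its volume via the area formula for the gradient map. Your phrasing in terms of the polar body $K^\circ$ and the cone's sub-differential is a cleaner way to package the same geometry, with the key step in both cases being that the nearest boundary point produces a supporting hyperplane, hence a long ``spike'' of $K^\circ$ at distance $1/\dist(x_0,\p\W)$ from the origin.
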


We suggest that the reader attempt the proof of this lemma after reading this and the next section. See \hyperlink{ex:mp}{Problem 9} and its \hyperlink{sol:mp}{hints} at the end.

The techniques presented next could be considered refined versions of the convex geometry arguments developed by Aleksandrov, Bakelmann, and Pucci to establish a maximum principle for linear equations without divergence structure. We recommend the lecture \cite{MR2465040} for a complete survey on the subject.

\subsubsection{Touching Solutions with a Family of Test Functions}\label{sec:abp}

One of the main tools available for elliptic equations is the comparison principle. In the case of a super-solution $u$, a test function touching $u$ from below inherits the same equation as $u$ at the contact point. If we now consider a whole family of test functions, it is reasonable to expect that the corresponding contact set must also retain information from the equation.

To fix ideas, consider a family of test functions obtained by translations of a given profile
\[
\varphi_{y_0}(x) := \varphi_0(x-y_0).
\]
We will call $y_0$ the center of the function $\varphi_{y_0}$.

Given $u\in C(\W)$, consider the contact set as $y_0$ varies in some set $B$ as
\[
A := \bigcup_{y_0\in B} \argmin(u-\varphi_{y_0}). 
\]
(Figure \ref{fig:abp2}).

\begin{figure}
    \centering
    \includegraphics[width=0.8\textwidth]{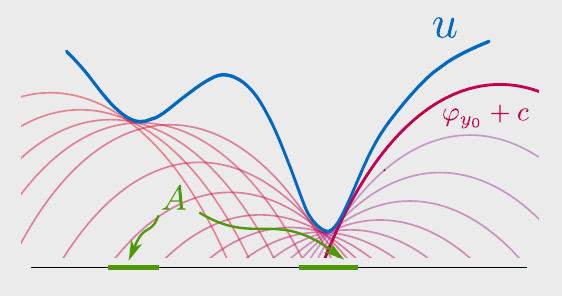}
    \caption{The contact set for a function $u$ is the set of points in the domain that admit a lower supporting graph of the form $\varphi_{y_0}+c$.}
    \label{fig:abp2}
\end{figure}

Here are three general principles about the contact set $A$:

\textbf{The set $A$ captures information about $u$:} For every $x_0 \in A$ one has $u(x_0) = \varphi_{y_0}(x_0)$, $Du(x_0) = D\varphi_{y_0}(x_0)$, and $D^2u(x_0) \geq \varphi_{y_0}(x_0)$. Depending on the choice of test functions, one can obtain different bounds on $u$, $Du$ or $D^2u$ over $A$.

For this previous information on $u$ to be significant, we need to ensure that $A$ is not too small.

\textbf{The measure of $B$ provides a lower bound on the measure of $A$:} For this we notice that if the test function $\varphi_{y_0}$ touches $u$ from below at $x_0$, then
\[
Du(x_0) = D\varphi_{y_0}(x_0) = D\varphi_0(x_0-y_0).
\]
If $D\varphi_0$ has an inverse, then one can compute the center of the test function touching $u$ from below at $x_0$ as $y_0 = T(x_0) := x_0 - D\varphi_{0}^{-1}(Du(x_0))$. If $T\colon A\to B$ is surjective and Lipschitz, we get by the area formula that
\[
|B| \leq \int_{A} |\det(DT)|.
\]
Notice that $DT$ ultimately depends on $D^2u$. Using the previous identity, we observe that $\int_{A} |\det(DT)|$ can be bounded by the measure of $A$ if there is some bound on $D^2u$ over $A$.

\textbf{Uniform ellipticity controls the eigenvalues of $D^2u$ over $A$:} The negative eigenvalues of $D^2u$ are bounded by the eigenvalues of $D^2\varphi_{y_0}$ due to the second derivative test. Then we can use $\mathcal M_{\l,\L}^-(D^2u) = \sum_{e\in \operatorname{eig}(D^2u)} \l e_+ - \L e_-$, to bound the positive eigenvalues of $D^2u(x_0)$ in terms of $\mathcal M_{\l,\L}^-(D^2u)$ and the already controlled negative eigenvalues.

\subsubsection{Measure Estimates (ABP-type Lemmas)}\label{sec:abp2}

The following lemma provides a concrete application of this idea when $\varphi_0(x) = -\frac{1}{2}|x|^2$ is a concave paraboloid, and therefore $T(x) = x + Du(x)$. This approach is know as \textit{the method of the sliding paraboloids} and was originally developed in \cite{MR1447056} and \cite{MR2334822}.

\begin{lemma}[Measure estimate]\label{lem:abp}
    Given $[\l,\L]\ss(0,\8)$, there exist $\d,\theta,\eta\in(0,1)$, such that the following holds: Let $u\in C^2(B_1)$, and $f \in L^n(B_1)$ with $\|f\|_{L^n(B_1)}\leq \d$, both non-negative functions such that $\mathcal M^-_{\l,\L}(D^2u)\leq f$. Then
    \[
    u(0) \leq \theta \qquad\Rightarrow\qquad |\{u\leq 1\}\cap B_1|\geq \eta.
    \]
\end{lemma}

% \begin{remark}
% By homogeneity and dilation invariance we get that under the same hypotheses
% \[
% x_0 \in \{u\leq \theta/2\}\cap \overline{B_{1/2}} \qquad\Rightarrow\qquad |\{u\leq 1/2\}\cap B_1|\geq |\{u\leq 1/2\}\cap B_{1/2}(x_0)| \geq 2^{-n}\eta.
% \]
% The contrapositive of this observation gets closer to the key step in the diminish of the oscillation presented in Lemma \ref{lem:dim_osc2}. Unfortunately, it is not sufficient to finish the proof.

% Assume that $u$ takes values in $[0,1]$, and we want to consider in which direction the oscillation improves, depending on which one of the following measures is larger, $|\{u\geq 1/2\}\cap B_1|$ or $|\{u\leq 1/2\}\cap B_1|$. This would require to have $2^{-n}\eta$ to be at least half the measure of $|B_1|$, which is not the case.
% \end{remark}

\begin{proof}[Proof of Lemma \ref{lem:abp}]
    Consider the family of paraboloids
    \[
    \varphi_{y_0}(x) := -\frac{1}{2}|x-y_0|^2 + \frac{1}{2}\1\frac{3}{4}\2^2 \leq 1.
    \]
    Letting $\theta := 1/4$, we get that for every $y_0\in B_{1/4}$ we have $\varphi_{y_0}(0) > \theta$ and $\varphi_{y_0} < 0$ in $\R^n \sm B_1$. Together with the hypotheses $u(0) \leq \theta$ and $\inf_{B_1} u\geq 0$, they imply that for every $y_0\in B_{1/4}$
    \begin{align}\label{eq:2}
    \emptyset \neq \argmin (u - \varphi_{y_0}) \ss \{u\leq 1\}\cap B_1.        
    \end{align}
    (Figure \ref{fig:abp}). Therefore, the result would follow from a lower bound on the measure of the contact set $A = \bigcup_{y_0\in B_{1/4}}\argmin (u - \varphi_{y_0})$.

\begin{figure}
    \centering
    \includegraphics[width=0.8\textwidth]{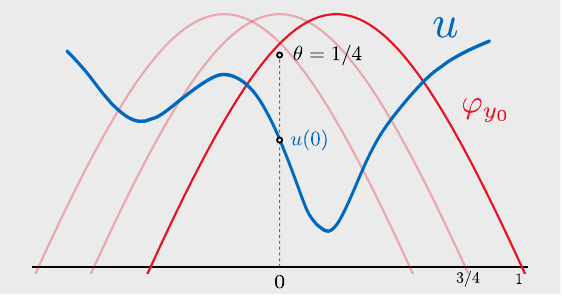}
    \caption{The paraboloids are arranged such that for $y_0 \in B_{1/4}$ it happens that $\varphi_{y_0}(0) > \theta$, and $\varphi_{y_0}<0$ in $\R^n\sm B_1$. Hence a positive function that dips below $\theta$ at the origin must necessarily cross all of these paraboloids. By translating each paraboloid downwards we obtain a contact point in $\{u\leq 1\}\cap B_1$.}
    \label{fig:abp}
\end{figure}

    Consider now the mapping $T\colon A\to B_{1/4}$ defined such that $T(x_0)=x_0+Du(x_0)$. Equivalently, $y_0=T(x_0)$ is the center of the paraboloid $\varphi_{y_0}$ such that $x_0 \in \argmin(u-\varphi_{y_0})$. By \eqref{eq:2} it also follows that $T$ is surjective. We finally observe that $DT = I+D^2u\geq 0$ on the contact set $A$, just by definition of the contact set and the second derivative test. 
    
    Now we will use the uniform ellipticity over the contact set. For every $x_0\in A$ we already have that the eigenvalues of $D^2u(x_0)$ are bounded from below by $-1$, then for any positive eigenvalue $e\in \operatorname{eig}(D^2u(x_0))$ we must have $\l e \leq (n-1)\L + f(x_0)$. This means that $\det(D^2u+I) \leq C(1+f^n)$ in $A$ for some $C\geq1$ depending on the dimension $n$ and the ellipticity constants.
    
    By the area formula and the hypothesis $\|f\|_{L^n(B_1)}\leq \d$,
    \[
    |B_{1/4}| \leq \int_{A}\det(D^2u+I) \leq C|A| + C\d^n.
    \]
    By choosing $\d$ sufficiently small we can absorb the term $C\d^n$ on the left-hand side and get the desired estimate for $\eta := |B_{1/4}|/(2C)$.
\end{proof}

The purpose of the following lemma is to localize the measure estimate close to the origin.

\begin{lemma}[Localization]\label{lem:loc}
    Given $[\l,\L]\ss(0,\8)$ and $\r\in(0,1/2)$, there exist $\d\in(0,1)$ and $M\geq 1$, such that the following holds: Let $u\in C^2(B_1)$ and $f\in L^n(B_1)$ with $\|f\|_{L^n(B_1)}\leq \d$, both non-negative functions such that $\mathcal M^-_{\l,\L}(D^2u)\leq f$. Then
    \[
    \inf_{B_{1/2}} u \leq 1 \qquad\Rightarrow\qquad \inf_{B_{\r}}u \leq M.
    \]
\end{lemma}

\begin{figure}
    \centering
    \includegraphics[width=0.8\textwidth]{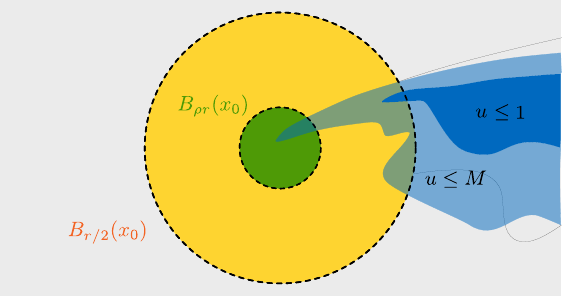}
    \caption{For any $B_r(x_0)$ in the domain of the equation, we have that if the set $\{u\leq 1\}$ enters $\overline{B_{r/2}(x_0)}$, then the set $\{u\leq M\}$ has positive density in $B_{\r r}(x_0)$.}
    \label{fig:mest}
\end{figure}

Before proving Lemma \ref{lem:loc}, let us point out a key consequence. By combining the measure estimate (Lemma \ref{lem:abp}), the localization (Lemma \ref{lem:loc}), and using the scaling symmetries of the equation, we obtain the following corollary. In Figure \ref{fig:mest} we illustrate a rescaled version of this result, which will be an important tool in the next section.

\begin{corollary}[Localized measure estimate]\label{cor:mest}
Given $[\l,\L]\ss(0,\8)$ and $\r \in(0,1/2)$, there exist $\d,\eta\in(0,1)$ and $M\geq 1$, such that the following holds: Let $u\in C^2(B_1)$ and $f\in L^n(B_1)$ with $\|f\|_{L^n(B_1)}\leq \d$, both non-negative functions such that $\mathcal M^-_{\l,\L}(D^2u)\leq f$. Then
\[
    \inf_{B_{1/2}}u \leq 1 \qquad\Rightarrow\qquad \frac{|\{u\leq M\}\cap B_{\r}|}{|B_{\r}|}\geq \eta.
\]
\end{corollary}

The detailed proof of Corollary \ref{cor:mest} can be found in Section \ref{sec:sum}. 

\begin{proof}[Proof of Lemma \ref{lem:loc}]
    Consider the family of test functions (Figure \ref{fig:abp4})
    \[
    \varphi_{y_0}(x) := C_0\frac{q(|x-y_0|)-q(1-\r/2)}{q(1/2+\r/2)-q(1-\r/2)}, \qquad q(r) := \min\{r^{-\a},(\r/2)^{-\a}\},
    \]
    where $C_0,\a\geq 1$ will be chosen sufficiently large. We see that for any $y_0 \in B_{\r/2}$, we have $\inf_{B_{1/2}}\varphi_{y_0} \geq 1$, and $\sup_{\R^n\sm B_1}\varphi_{y_0} \leq 0$.
    
    Let $M := \max_{\R^n} \varphi_{y_0}$. If we assume by contradiction that $\inf_{B_{\r}}u > M$ we obtain that for each $y_0\in B_{\r/2}$
    \begin{align}
        \label{eq:3}
    \emptyset\neq \argmin(u-\varphi_{y_0}) \ss B_1\sm \overline{B_{\r}}.
    \end{align}

    \begin{figure}
        \centering
        \includegraphics[width=0.8\textwidth]{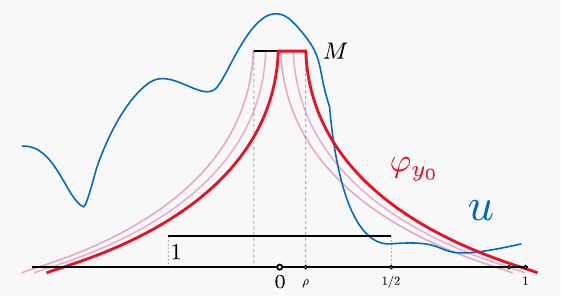}
        \caption{The test functions are arranged such that for $y_0\in B_{\r/2}$, they are smooth in $B_1\sm \overline{B_\r}$, remain above one in $\overline{B_{1/2}}$, and below zero in $\R^n\sm B_1$. Hence, a positive function that goes above their maximum $M$ in $\overline{B_\r}$, and dips below one in $\overline{B_{1/2}}$ must necessarily cross all the functions in $B_1\sm \overline{B_\r}$. As in Lemma \ref{lem:abp}, we can translate the test functions downwards until the touch $u$ at some point in $B_1\sm \overline{B_\r}$.}
        \label{fig:abp4}
    \end{figure}
    
    Let $A = \bigcup_{y_0 \in B_{\r/2}} \argmin(u-\varphi_{y_0})$, and $T:A\to B_{\r/2}$ such that $T(x_0)=y_0$ if $x_0 \in \argmin (u-\varphi_{y_0})$. This mapping is surjective due to \eqref{eq:3} and gets computed from the tangency relation at the contact point
    \[
    T(x_0) = x_0 - D\varphi_0^{-1}(Du(x_0)) = x_0 + C_0C_1\a|Du(x_0)|^{-(2+\a)/(1+\a)}Du(x_0).
    \]
    where $C_1:= (q(1/2+\r/2)-q(1-\r/2))^{-1}$.

    Let $\Phi(x) := D^2\varphi_0(x-T(x))$. By the area formula
    \begin{align}\label{eq:area}
        |B_{\r/2}| \leq \int_A |\det(DT)| = \int_A |\det(I - \Phi^{-1} D^2u)|.
    \end{align}

    To estimate the integrand we use the following fact from linear algebra: For $N_1,N_2 \in \R^{n\times n}_{\text{sym}}$ such that $\det N_1\neq 0$\footnote{For $M\in \R^{n\times n}$, we denote $|M|_{\text{op}} := \sup_{|x|=1}|Mx|$. If $M$ is symmetric then we have that
    \[
    |M|_{\text{op}} = \max\{|e| \ | \ e\in \operatorname{eig}(M)\}.
    \]}
    \begin{align*}
    |\det(I - N_1^{-1}N_2)| &= \sqrt{\det((I-N_1^{-1}N_2)^T(I-N_1^{-1}N_2))}\\
    &\leq |I - N_2N_1^{-1} - N_1^{-1}N_2 + N_2N_1^{-2}N_2|_{\text{op}}^{n/2}\\
    &\leq (1+2|N_1^{-1}|_{\text{op}}|N_2|_{\text{op}}+|N_1^{-1}|_{\text{op}}^2|N_2|_{\text{op}}^2)^{n/2}\\
    &\leq C(1+|N_1^{-1}|_{\text{op}}^n|N_2|_{\text{op}}^n)\\
    &= C\11+\1\frac{\max_{e\in \operatorname{eig}(N_2)}|e|}{\min_{e\in \operatorname{eig}(N_1)}|e|}\2^n\2.
    \end{align*}
    
    For $z \in \R^n\sm \overline{B_{\r/2}}$ we compute
    \[
    D^2\varphi_0(z) = C_0C_1\a|z|^{-\a-2}\1(\a+2)\frac{z\otimes z}{|z|^2} - I\2.
    \]
    From this we get that the eigenvalues of $D^2\varphi_0(z)$ are: $C_0C_1\a|z|^{-\a-2}(\a+1)$ with single multiplicity, and $-C_0C_1\a|z|^{-\a-2}$ with multiplicity $(n-1)$. 
    
    For $x_0 \in A$ and $y_0=T(x_0)$
    \begin{align*}
    f(x_0) \geq \mathcal M^-_{\l,\L}(D^2\varphi_{y_0}(x_0)) = C_0C_1\a|x_0-y_0|^{-\a-2}(\l (\a+1)-\L(n-1)) .
    \end{align*}
    We can make the right-hand side positive by fixing $\a:=\L n/\l$. Then we can also make the right-hand side $\geq 1$ by fixing $C_0$ large enough, depending on $\r$ and $\a$.

    Once we have fixed the parameters $\a$, $C_0$, and hence also $M$; we notice that $|D^2\varphi_{y_0}|_{\text{op}}$ and $|(D^2\varphi_{y_0})^{-1}|_{\text{op}}$ are bounded over $B_1\sm \overline{B_{\r}}\supseteq A$, uniformly with respect to $y_0\in B_{\r/2}$. The eigenvalues of $D^2u(x_0)$ are bounded from below by $-|D^2\varphi_{y_0}(x_0)|_{\text{op}}$. On the other hand, the equation provides an upper bound for the positive eigenvalues $e \in \operatorname{eig}(D^2u(x_0))$ given by $\l e \leq \L (n-1)|D^2\varphi_{y_0}(x_0)|_{\text{op}} + f(x_0)$.
    
    Putting together the information about the eigenvalues of $N_1= \Phi = D^2\varphi_{y_0}$ and $N_2= D^2u$, and using that $A\ss \{f\geq 1\}$, we conclude that $|DT| = |\det(I - \Phi^{-1} D^2u)| \leq Cf^n$ in $A$. Due to \eqref{eq:area} we obtain $|B_{\r/2}|\leq C\|f\|_{L^n(B_1)}^n \leq C\d^n$, however this is a contradiction if we finally fix $\d$ sufficiently small.
\end{proof}

% The following classical result in analysis involves the same type of idea as has been discussed so far: In this case, we are able to estimate the distribution of the derivative by testing the given function with rays of a fixed slope.

% \begin{exercise}[\hypertarget{ex:sun}{\hyperlink{sol:sun}{\textbf{Rising Sun Lemma.}}} Given $u \in C^1_b([0,1])$ and $m>0$, consider the contact set with rays of slope $m$ coming from the right (the sunny region)
% \[
% A^m := \3x \in [0,1] \ | \ u(x)+m(y-x) \geq u(y) \text{ for all } y\in[x,1]\4.
% \]
% We also consider $S^m := (0,1)\sm A^m$ (the shaded region), see Figure \ref{fig:sun}.]
% \item Show that for $m>0$
% \[
% |\{u' > m\}|\leq |S^m| \leq m^{-1}\int_0^1|u'|.
% \]
% \end{exercise}

% \begin{figure}
%     \centering
%     \includegraphics[width=0.8\textwidth]{}
%     \caption{The lemma states that as the sun rises from the right or east ($m\to \8$), the length of the shaded region goes to zero in a quantified way ($|S^m|\lesssim m^{-1}$).}
%     \label{fig:sun}
% \end{figure}

\begin{exercise}[\hypertarget{ex:mp}{\hyperlink{sol:mp}{\textbf{ABP maximum principle.}}} The techniques used in the previous two lemmas were originally used to prove the Aleksandrov-Bakelman-Pucci maximum principle, for this reason they are also known as \textit{ABP-type lemmas}. In this case the test functions are linear functions parameterized by its gradient.]
    \item Given $[\l,\L]\ss(0,\8)$ there exists $C>1$ such that the following holds: Let $u \in C^2(B_1)\cap C(\overline{B_1})$ and $f\in L^n(B_1)$ non-negative such that
    \[
    \begin{cases}
        \mathcal M_{\l,\L}^-(D^2u) \leq f \text{ in } B_1,\\
        u\geq 0 \text{ on } \p B_1.
    \end{cases}
    \]
    Consider the contact set
    \[
    A := \bigcup_{p\in \R^n} \argmin (u - \varphi_p), \qquad \varphi_p(x):= p\cdot x.
    \]
    Then $\sup_{B_1} u_- \leq C \|f\|_{L^n(A)}$.
    \item There exists some constant $C\geq 1$ depending on the dimension such that the following holds. Let $\W\ss\R^n$ be an open, convex and bounded set and $u\in C^2(\W)\cap C(\overline{\W})$ a convex function with $u=0$ on $\p\W$. Then, for every $x_0\in \W$
    \[
    |u(x_0)|^n \leq C\diam(\W)^{n-1}\dist(x_0,\p\W)\int_\W \det(D^2u),
    \]
    This gives a $1/n$-Hölder modulus of continuity for convex functions at the boundary.
\end{exercise}

    \begin{figure}
        \centering
        \includegraphics[width=0.8\textwidth]{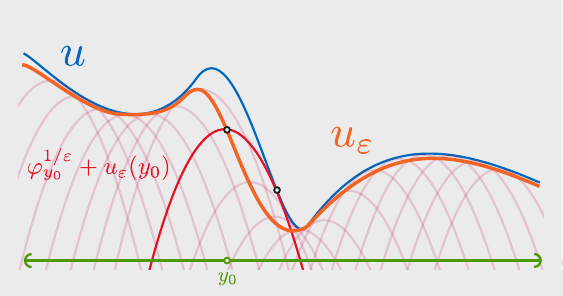}
        \caption{Geometrically, $(y_0,u_\e(y_0))$ gives the vertex of a concave paraboloid that touches $u$ from below. Similar to the approximations of the identity, we obtain that $u_\e$ approximates $u$ as $\e\to0$, and gains some regularity. This construction is essential in the theory of viscosity solutions.}
        \label{fig:inf_conv}
    \end{figure}

\begin{exercise}[\hypertarget{ex:envelope}{\hyperlink{sol:envelope}{\textbf{Inf/Sup-convolutions.}}} Given $\W\ss\R^n$ open, $u \in C(\W)$, and $\e>0$, we define the \textit{inf-convolution} of $u$ as $u_\e\colon \R^n \to \R$ such that
\[
u_\e(y_0) := \inf_\W \1u-\varphi^{1/\e}_{y_0}\2, \qquad \varphi^{1/\e}_{y_0}(x) := -\frac{1}{2\e}|x-y_0|^2.
\]
In a similar way, we define the \textit{sup-convolution} as
\[
u^\e(y_0) := \sup_\W \1u-\varphi^{-1/\e}_{y_0}\2 = -(-u)_\e(y_0).
\]
See Figure \ref{fig:inf_conv}.
]
\item Show that if $0 < \e_1 <\e_2$, then $u_{\e_2}\leq u_{\e_1}\leq u$.
\item Show that for any $\overline{B_r(x_0)}\ss\W$, $u_\e$ converges to $u$ uniformly in $\overline{B_r(x_0)}$.
\item Show that $x\mapsto u_\e(x)+|x|^2/(2\e)$ is concave.
\item Show the semi-group property: $(u_{1/\a_1})_{1/\a_2} = u_{1/(\a_1+\a_2)}$.
\item Show that $\G_\e := (u_\e)^\e$ is the the following upper-envelope of paraboloids
\[
\G_\e(x) = \sup\{\varphi^{1/\e}_{y_0}(x)+C \ | \ \varphi^{1/\e}_{y_0}+C\leq u\}.
\]
\item Show that
\[
\{u = \G_\e\} = \bigcup_{y_0 \in \R^n} \argmin(u-\varphi_{y_0}^{1/\e}).
\]
\end{exercise}

\begin{exercise}[\hypertarget{ex:abp_drift}{\hyperlink{sol:abp_drift}{\textbf{Measure estimates for operators with a drift term.}}}]
    \item (Measure estimate with small drift) Show that for $[\l,\L]\ss(0,\8)$, there exist $\d,\theta,\eta\in(0,1)$, such that the following holds: Let $u\in C^2(B_1)$, and $b,f \in L^n(B_1)$ with $\|b\|_{L^n(B_1)}\leq \d$, $\|f\|_{L^n(B_1)}\leq \d$, all of them non-negative functions such that
    \[
    \mathcal M^-_{\l,\L}(D^2u) - b|Du| \leq f \text{ in } B_1.
    \]
    Then
    \[
    u(0) \leq \theta \qquad\Rightarrow\qquad |\{u\leq 1\}\cap B_1|\geq \eta.
    \]
    \item (Localization with small drift) Prove a similar extension for Lemma \ref{lem:loc} and its corollary.
    \item (ABP maximum principle) Prove a similar extension for the ABP maximum principle without the smallness condition on $\|b\|_{L^n(B_1)}$: Given $[\l,\L]\ss(0,\8)$ there exists $C>1$ such that the following holds: Let $u \in C^2(B_1)\cap C(\overline{B_1})$, and $b,f \in L^n(B_1)$ non-negative with $\|b\|_{L^n(B_1)}\leq \L$ such that
    \[
    \begin{cases}
        \mathcal M_{\l,\L}^-(D^2u) - b|Du| \leq f \text{ in } B_1,\\
        u\geq 0 \text{ on } \p B_1.
    \end{cases}
    \]
    Consider the contact set
    \[
    A := \bigcup_{p\in \R^n} \argmin (u - \varphi_p), \qquad \varphi_p(x):= p\cdot x.
    \]
    Then
    \[
    \sup_{B_1}u_- \leq C \|f\|_{L^n(A)}.
    \]
    \item (Localized measure estimate) Prove a similar extension for Corollary \ref{cor:mest} without the smallness condition on $\|b\|_{L^n(B_1)}$: Given $[\l,\L]\ss(0,\8)$ and $\r \in(0,1/2)$, there exist $\d,\eta\in(0,1)$ and $M\geq 1$, such that the following holds: Let $u\in C^2(B_1)$, and $b,f \in L^n(B_1)$ with $\|b\|_{L^n(B_1)}\leq \L$ and $\|f\|_{L^n(B_1)}\leq \d$, all of them non-negative functions such that
    \[
    \mathcal M^-_{\l,\L}(D^2u) - b|Du| \leq f \text{ in } B_1.
    \]
    Then
    \[
    \inf_{B_{1/2}}u \leq 1 \qquad\Rightarrow\qquad \frac{|\{u\leq M\}\cap B_{\r}|}{|B_{\r}|}\geq \eta.
    \]
\end{exercise}

\begin{exercise}[\hypertarget{ex:par_eq1}{\hyperlink{sol:par_eq1}{\textbf{Measure estimates for parabolic equations.}}} For dynamic problems, we consider contact points $(x_0,t_0) \in B_1\times(-1,0\rbrack$ with respect to a function $u=u(x,t) \in C(B_1\times(-1,0\rbrack)$ and a test function $\varphi=\varphi(x,t)$ from below. The time $t_0$ is the first time when the graph of $\varphi$ touches the graph of $u$ from below, and the point $x_0$ is any point where the contact occurs at time $t_0$.]
    \item (Maximum principle) Show that for $[\l,\L]\ss(0,\8)$ there exists $C>1$ such that the following holds: Let $u \in C^2(B_1\times(-1,0])\cap C(\overline{B_1}\times[-1,0])$ and $f \in L^{n+1}(B_1\times(-1,0])$ non-negative with\footnote{$\p_{\text{par}}(B_1\times(-1,0]) = \p B_1\times(-1,0] \cup B_1\times\{-1\}$ denotes the parabolic boundary.}
    \[
    \begin{cases}
        \p_t u - \mathcal M_{\l,\L}^- (D^2u) \geq -f \text{ in } B_1\times(-1,0],\\
        u\geq 0 \text{ on } \p_{\text{par}}(B_1\times(-1,0]).
    \end{cases}
    \]
    Consider for $p\in \R^n$, $h\in \R$, and $\varphi_{p,h}(x) := p\cdot x-h$, the contact set
    \begin{align*}
    A := \{(x_0,t_0) \in B_1\times(-1,0] \ | \ &\text{For some $(p_0,h_0) \in \R^{n+1}$,}\\
    &\text{$u > \varphi_{p_0,h_0}$ in $B_1\times(-1,t_0)$,}\\
    &\text{and $u(x_0,t_0) = \varphi_{p_0,h_0}(x_0,t_0)$}\}.
    \end{align*}
    Then
    \[
    \sup_{B_1\times(-1,0]} u_- \leq C \|f\|_{L^{n+1}(A)}.
    \]
    \item (Measure estimate) Given $[\l,\L]\ss(0,\8)$ there exist $\d,\theta,\eta\in(0,1)$ such that the following holds: Let $u \in C^2(B_1\times(-1,0])$ and $f \in L^{n+1}(B_1\times(-1,0])$ with $\|f\|_{L^{n+1}(B_1\times(-1,0])}\leq \d$, both non-negative functions such that
    \[
    \p_t u - \mathcal M^-_{\l,\L}(D^2u) \geq -f \text{ in } B_1\times(-1,0].
    \]
    Then
    \[
    u(0,0) \leq \theta \qquad\Rightarrow\qquad |\{u\leq 1\}\cap B_1\times(-1,0]| \geq \eta.
    \]
    \item (Localization) Given $[\l,\L]\ss(0,\8)$ and $\r\in(0,1/2)$ there exist $\d,\eta\in(0,1)$ and $M\geq 1$, such that the following holds: Let $u \in C^2(B_1\times(-1,0])$ and $f \in L^{n+1}(B_1\times(-1,0])$ with $\|f\|_{L^{n+1}(B_1\times(-1,0])}\leq \d$, both non-negative functions such that
    \[
    \p_t u - \mathcal M^-_{\l,\L}(D^2u) \geq -f \text{ in } B_1\times(-1,0].
    \]
    Then
    \begin{align*}
    \inf_{B_{1/2}\times(-1/2,0]} u\leq 1 \qquad\Rightarrow\qquad \inf_{B_{\r}\times(-1,-1+\r]} u \leq M.
    \end{align*}
    \item (Localized measure estimate) Given $[\l,\L]\ss(0,\8)$ and $\r\in(0,1/2)$ there exist $\d,\eta\in(0,1)$ and $M\geq 1$, such that the following holds: Let $u \in C^2(B_1\times(-1,0])$ and $f \in L^{n+1}(B_1\times(-1,0])$ with $\|f\|_{L^{n+1}(B_1\times(-1,0])}\leq \d$, both non-negative functions such that
    \[
    \p_t u - \mathcal M^-_{\l,\L}(D^2u) \geq -f \text{ in } B_1\times(-1,0].
    \]
    Then
    \begin{align*}
    \inf_{B_{1/2}\times(-1/2,0]} u \leq 1 \qquad\Rightarrow\qquad \frac{|\{u\leq M\}\cap B_\r\times(-1/2-\r,-1/2]|}{|B_\r\times(-1/2-\r,-1/2]|} \geq \eta.
    \end{align*}
\end{exercise}

\begin{exercise}[\hypertarget{ex:w2p}{\hyperlink{sol:w2p}{\textbf{Measure estimates for the Hessian I.}}} We have seen in this section and the previous problems that the contact set captures information about $u$ and also its gradient, due to the first derivative test. It is natural to expect that it also captures information about its Hessian, due to the second derivative test. For $M>0$, $B\ss\R^n$, and $u\in C^2(B_1)$, consider the contact sets
\[
A^M_B := \bigcup_{y_0\in B}\argmin\1u-\varphi_{y_0}^M\2 \ss \{|(D^2u)_-|_{\text{op}}\leq M\}, \qquad \varphi_{y_0}^M(x):= -\frac{M}{2}|x-y_0|^2.
\]]
\item Given $[\l,\L]\ss(0,\8)$ and $\r \in (0,1/2)$, there exist $\d,\eta\in(0,1)$ and $M\geq 1$, such that for $u\in C^2(B_1)$ with $\|(\mathcal M^-_{\l,\L}(D^2u))_+\|_{L^n(B_1)}\leq \d$,
\[
    A^1_{\{0\}}\cap \overline{B_{1/2}} \neq \emptyset \qquad\Rightarrow\qquad \frac{|A^M_{\R^n}\cap B_\r|}{|B_\r|}\geq \eta.
\] 
\end{exercise}

\subsubsection{Diminish of Distribution}\label{sec:dim_dist}

The localized measure estimate proved in the previous section concludes that
\[
|\{u>M\}\cap B_{\r}| \leq (1-\eta)|B_\r|.
\]
In this section, we will iterate this estimate and obtain the following geometric decay for the distribution
\[
|\{u>M^k\}\cap B_{\r}| \leq (1-\eta)|\{u>M^{k-1}\}\cap B_{\r}| \leq \ldots \leq (1-\eta)^k|B_\r|.
\]
As soon as $(1-\eta)^k$ becomes less than $1/2$, we are in a position to prove the diminish of oscillation (Lemma \ref{lem:dim_osc2}).

By the homogeneity of the equation, it suffices to show the step from $k=0$ to $k=1$
\[
|\{u> M\}\cap B_{\r}|\leq (1-\eta)|\{u>1\}\cap B_{\r}|,
\]
which by rearranging the terms is equivalent to the following estimate (Figure \ref{fig:isoper})
\begin{align}
    \label{eq:4}
    |\{1 < u \leq M\}\cap B_{\r}| \geq \eta|\{u> 1\}\cap B_{\r}|.
\end{align}

At this point, we pose the equation in $B_{3\sqrt n}$, replace $B_\r$ with the cube $Q_1 = (-1/2,1/2)^n$ of unit length, and $\overline{B_{1/2}}$ for $\overline{Q_3} = [-3/2,3/2]^n$. This modification will be convenient for implementing a covering argument using dyadic cubes.

    \begin{figure}
        \centering
        \includegraphics[width=0.8\textwidth]{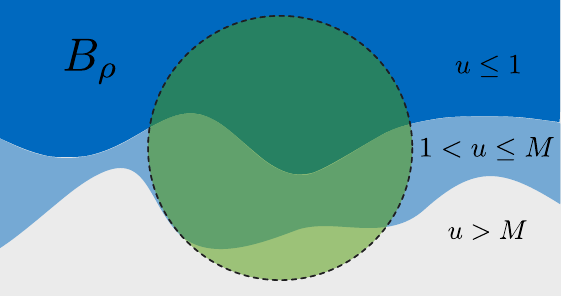}
        \caption{The estimate $|\{1 < u \leq M\}\cap B_{\r}| \geq \eta|\{u> 1\}\cap B_{\r}|$ resembles the isoperimetric inequality where the set $\{1 < u \leq M\}$ plays the role of the perimeter for the set $\{u>1\}$.}
        \label{fig:isoper}
    \end{figure}

\begin{lemma}[Diminish of distribution]\label{lem:dimdist}
    Given $[\l,\L]\ss(0,\8)$, there exist $\d,\eta\in(0,1)$ and $M\geq 1$ such that the following holds: Let $u\in C^2(B_{3\sqrt n})$ and $f \in L^n(B_{3\sqrt n})$ with $\|f\|_{L^n(B_{3\sqrt n})}\leq \d$, both non-negative functions such that $\mathcal M^-_{\l,\L}(D^2u)\leq f$. Then
    \[
    \inf_{Q_3} u \leq 1 \qquad\Rightarrow\qquad |\{1 < u \leq M\}\cap Q_1| \geq \eta|\{u> 1\}\cap Q_1|.
    \]
\end{lemma}

The proof of this lemma will be guided in a moment after discussing its most important consequence, the weak Harnack inequality.

Taking $\e = -\ln(1-\eta)/\ln M$ we obtain the following result with an argument similar to the one in the proof of Lemma \ref{cor:dim_osc}.

\begin{lemma}[Weak Harnack inequality - uniformly elliptic]
    \label{lem:wharnack}
    Given $[\l,\L]\ss(0,\8)$, there exist $\e\in(0,1)$ and $C\geq 1$ such that the following holds: Let $u\in C^2(B_{3\sqrt n})$ and $f \in L^n(B_{3\sqrt n})$ both non-negative functions such that $\mathcal M^-_{\l,\L}(D^2u)\leq f$. Then
    \[
    \sup_{\m>0}\m^{\e}|\{u \geq \m\}\cap Q_1|\leq C\1\inf_{Q_3} u+\|f\|_{L^n(B_{3\sqrt n})}\2^\e.
    \]
\end{lemma}

The outline of the proof of Lemma \ref{lem:wharnack} is the iteration discussed at the beginning of this section. Details are given in Section \ref{sec:sum}.

To prove Lemma \ref{lem:dimdist} we will integrate the local information given by Corollary \ref{cor:mest}. To do this, we use the \textit{dyadic decomposition} of a relatively open set of the unit cube.

The \textit{dyadic cubes} in $\overline{Q_1} = [-1/2,1/2]^n$ are a collection of closed cubes constructed by an iterative procedure: Each dyadic cube has the form
\[
\overline{Q_{2^{-k}}(x_0)} = \prod_{i=1}^n [(x_0)_i-2^{-(k+1)},(x_0)_i+2^{-(k+1)}],
\]
where $k$ is a non-negative integer, $x_0 = ((x_0)_1,\ldots,(x_0)_n)\in \R^n$ is its center, and $2^{-k}$ is the length of its sides. The unit cube $\overline{Q_1}$ is dyadic and forms the zeroth generation. Each successive generation is obtained from the previous one by subdividing each cube $Q$ into $2^n$ congruent cubes of half the length. We call these cubes the \textit{descendants} of $Q$, and for each descendant of $Q$ we say that $Q$ is its \textit{progenitor}.

For a relatively open set $E \ss \overline{Q_1}$, its dyadic decomposition is the set of dyadic cubes $Q\ss E$ such that the progenitor of $Q$ is not fully contained in $E$, see \cite[Theorem 1.4]{MR2129625}. This collection forms an almost-disjoint covering of $E$ (any pair of cubes has disjoint interiors). Figure \ref{fig:covering} illustrates some stages in the decomposition of $\{u>1\}\cap \overline{Q_1}$.

    \begin{figure}
        \centering
        \includegraphics[width=0.8\textwidth]{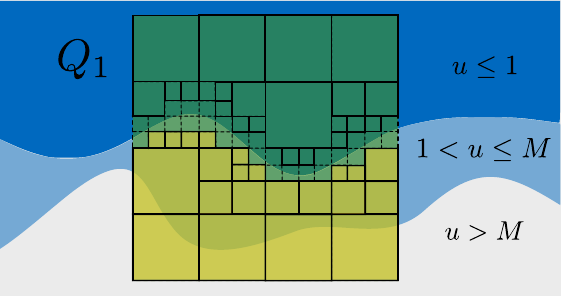}
        \caption{Each cube $Q$ in the dyadic decomposition of $E = \{u>1\}\cap \overline{Q_1}$ is selected by the following algorithm: Starting from the zeroth generation, a dyadic cube will be discarded (with all its descendants) if it does not intersect $E$, it will be selected if it is contained in $E$, otherwise its descendants will be passed to the next iteration.}
        \label{fig:covering}
    \end{figure}

\begin{proof}[Proof of Lemma \ref{lem:dimdist}]
Let $\d_0,\eta_0\in(0,1)$ and $M_0\geq 1$ be the constants from Corollary \ref{cor:mest} with respect to $\r := 1/(6\sqrt n)$ and the same ellipticity constants. We will show that Lemma \ref{lem:dimdist} holds for $\d := \d_0/(3\sqrt{n})$, $\eta:= \eta_0|B_{1/2}|$, and $M:=M_0$.

Let $Q := \overline{Q_\ell(x_0)}$ be an arbitrary dyadic cube in the decomposition of $\{u>1\}\cap \overline{Q_1}$. Let $r := \ell/(2\r) = 3\sqrt{n}\ell$, using that $\d = \d_0/(3\sqrt{n})$, we see that for the rescalings $v(x) := u(r x+x_0)$ and $g(x):= r^2f(r x+x_0) \geq \mathcal M_{\l,\L}^-(D^2v(x))$ we get
\[
\|g\|_{L^n(B_1)} = r\|f\|_{L^n(B_{r}(x_0))} \leq \d_0.
\]
Given that the predecessor of $Q$ intersects $\{u\leq 1\}$ 
\[
\inf_{B_{1/2}}v = \inf_{B_{3\sqrt{n}\ell/2}(x_0)}u \leq \inf_{Q_{3\ell}(x_0)}u \leq 1.
\]

Now we are in shape to apply Corollary \ref{cor:mest} to $v$ and $g$, obtaining in this way the following estimate for the distribution of $u$ over $Q$
\[
\frac{|\{u\leq M\}\cap Q|}{|Q|} \geq |B_{1/2}|\frac{|\{u\leq M\}\cap B_{\ell/2}(x_0)|}{|B_{\ell/2}(x_0)|}= |B_{1/2}|\frac{|\{v\leq M\}\cap B_{\r}|}{|B_{\r}|} \geq |B_{1/2}|\eta_0 = \eta.
\]

By adding this estimate to all the dyadic cubes in the decomposition of $\{u>1\}\cap \overline{Q_1}$
\[
|\{1 < u \leq M\}\cap Q_1| = \sum_Q |\{u\leq M\}\cap Q| \geq \eta \sum_Q |Q| = \eta|\{u>1\}\cap Q_1|,
\]
which concludes the proof.
\end{proof}

% \begin{remark}[Calderón-Zygmund Decomposition]
% An alternative argument used in \cite{MR1351007} consists in selecting the dyadic cubes according to the following criteria in terms of $F = \{u>M\}\cap \overline{Q_1}$ and $\eta\in(0,1)$. To start, let us assume that $|F|\leq (1-\eta)$. The dyadic cube $Q$ is selected if $|F\cap Q| > (1-\eta)|Q|$, but none of its predecessors satisfies a similar lower bound for the density of $F$. Due to the Lebesgue differentiation theorem, we get that the selected cubes must cover $F$ up to a set of measure zero.

% The contrapositive of Corollary \ref{cor:mest} implies that the predecessor of every selected cube must be contained in $E = \{u>1\}\cap \overline{Q_1}$. Hence, if we consider the covering given by the predecessors of the cubes selected by the algorithm, we get
% \[
% |F| \leq \sum_{Q} |F\cap Q| \leq (1-\eta)\sum_{Q} |Q| \leq (1-\eta)|E|.
% \]
% \end{remark}

\begin{figure}
    \centering
    \includegraphics[width=0.8\textwidth]{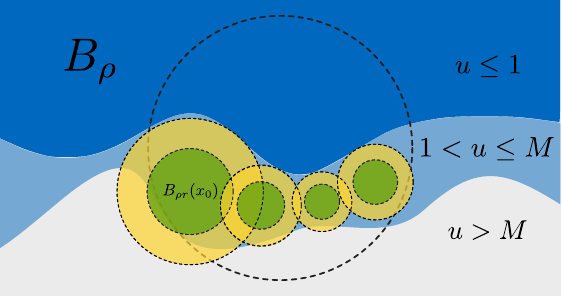}
    \caption{For $x_0 \in \{u>1\}$, and $r/2 = \dist(x_0,\{u\leq 1\})$, we get that the set $\{1<u\leq M\}$ has positive density in $B_{\r r}(x_0)$. The idea is then to add up this local information. The caveat is that the ball $B_{\r r}(x_0)$ may spill out of $B_\r$.}
    \label{fig:covering3}
\end{figure}
    
\noindent\textit{Growing Ink-spots.} It is also possible to prove Lemma \ref{lem:dimdist} using Vitali's covering lemma. In the Figure \ref{fig:covering3} we illustrate this approach and a geometric challenge.% The following result provides an alternative proof to the Lemma \ref{lem:dimdist}.

\begin{lemma}[Growing ink-spots]\label{lem:grow_ink}
    Let $\r_0:=1/6$ and $\r_1:=1/7$. Given $\eta\in (0,1)$ and $F\ss E\ss \R^n$ closed such that $\overline{B_{\r_1}} \cap F \neq \emptyset$, and for every ball $B_r(x_0) \ss B_1$
    \[
    \overline{B_{r/2}(x_0)} \cap F \neq \emptyset \qquad\Rightarrow\qquad \frac{|E\cap B_{\r_0 r}(x_0)|}{|B_{\r_0 r}(x_0)|}\geq \eta,
    \]
    it holds $|(E\sm F)\cap B_{\r_1}|\geq 10^{-n}\eta|B_{\r_1}\sm F|$.
\end{lemma}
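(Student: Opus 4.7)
The approach is a Vitali covering argument applied to balls naturally attached to points of $B_{\r_1}\setminus F$. For every $x\in B_{\r_1}\setminus F$, define $r(x):=2\,\mathrm{dist}(x,F)$, so that $\overline{B_{r(x)/2}(x)}$ meets $F$ by construction. Since $F\cap\overline{B_{\r_1}}\neq\emptyset$ and $x\in B_{\r_1}$, one has $\mathrm{dist}(x,F)\leq 2\r_1$, whence $r(x)\leq 4\r_1$ and $B_{r(x)}(x)\subseteq B_1$. The density hypothesis therefore applies at $B_{r(x)}(x)$ and delivers
\[
|E\cap B_{\r_0 r(x)}(x)|\geq \eta\,|B_{\r_0 r(x)}(x)|.
\]
Moreover, since $\r_0=1/6<1/2$ we have $\r_0 r(x)=r(x)/6<r(x)/2=\mathrm{dist}(x,F)$, so the inner ball $B_{\r_0 r(x)}(x)$ lies in the complement of $F$, and in particular $E\cap B_{\r_0 r(x)}(x)=(E\setminus F)\cap B_{\r_0 r(x)}(x)$.

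Next, I would invoke Vitali's covering lemma on the family $\{B_{\r_0 r(x)}(x):x\in B_{\r_1}\setminus F\}$, whose radii are uniformly bounded by $4\r_0\r_1$, to extract a countable disjoint subfamily $\{B_{\r_0 r(x_i)}(x_i)\}_i$ whose fivefold dilations cover $B_{\r_1}\setminus F$. Combining the covering bound, the density estimate, and the $F$-avoidance,
\[
|B_{\r_1}\setminus F|\leq 5^n\sum_i|B_{\r_0 r(x_i)}(x_i)|\leq \frac{5^n}{\eta}\sum_i\bigl|(E\setminus F)\cap B_{\r_0 r(x_i)}(x_i)\bigr|,
\]
and the pairwise disjointness of the $B_{\r_0 r(x_i)}(x_i)$ turns the last sum into $|(E\setminus F)\cap\bigcup_i B_{\r_0 r(x_i)}(x_i)|$.

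The remaining — and most delicate — step is the verification that $\bigcup_i B_{\r_0 r(x_i)}(x_i)\subseteq B_{\r_1}$, so that the preceding quantity is controlled by $|(E\setminus F)\cap B_{\r_1}|$. This is precisely where the calibration of the constants $\r_0=1/6$ and $\r_1=1/7$ is designed to operate, through the a priori bound $r(x_i)\leq 4\r_1$ together with $|x_i|<\r_1$; the argument reduces to a careful control of $|x_i|+\r_0 r(x_i)$. I expect this containment to be the main technical hurdle, since selected Vitali centers can in principle sit near $\partial B_{\r_1}$, and one may have to restrict the initial family to those $x$ for which the density ball automatically fits inside $B_{\r_1}$ and argue separately that the complementary ``boundary layer'' is controlled. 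Once the containment is in place, rearranging
\[
|B_{\r_1}\setminus F|\leq 5^n\eta^{-1}|(E\setminus F)\cap B_{\r_1}|
\]
yields the claimed inequality.
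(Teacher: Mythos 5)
Your setup mirrors the paper's: the same choice $r(x)=2\dist(x,F)$, the same observation that $B_{\r_0 r(x)}(x)$ avoids $F$ and that the density hypothesis applies, and the same plan to run Vitali. You also correctly identify the genuine obstruction: the balls $B_{\r_0 r(x_i)}(x_i)$ produced by Vitali need not lie inside $B_{\r_1}$ (indeed $|x_i|+\r_0 r(x_i)$ can exceed $\r_1$ when $r(x_i)$ is large), so the disjointness argument only bounds $|(E\setminus F)\cap\bigcup_i B_{\r_0 r(x_i)}(x_i)|$, which you cannot then compare to $|(E\setminus F)\cap B_{\r_1}|$. Identifying the hole is good, but your proposed remedy — split off a ``boundary layer'' and handle it separately — is not carried out and, as far as I can see, would not close easily: the excluded $x$ are precisely those with $r(x)$ comparable to $\r_1$, and these can constitute a non-negligible fraction of $B_{\r_1}\setminus F$ with no evident separate control.

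The paper closes this gap not by discarding bad points but by \emph{relocating} the ball attached to each $x_0\in B_{\r_1}\setminus F$. Pick $x_1\in\overline{B_{\r_1}}\cap\p B_{r(x_0)/2}(x_0)$ on the line through $0$ and $x_0$ (such a point exists because $\dist(x_0,F)\leq |x_0|+\r_1$), and let $x_2:=(x_0+x_1)/2$. Then the ball $B_{r(x_0)/4}(x_2)$ lies inside both $B_{r(x_0)/2}(x_0)$ (hence is disjoint from $F$) and $B_{\r_1}$, and these shifted closed balls still cover $B_{\r_1}\setminus F$. The price is that one must apply the density hypothesis to the enlarged ball $B_{3r(x_0)/2}(x_2)$: its half $\overline{B_{3r(x_0)/4}(x_2)}\supseteq\overline{B_{r(x_0)/2}(x_0)}$ still meets $F$, and the ratio $\frac{r(x_0)/4}{2\cdot 3r(x_0)/4}=\frac{1}{6}=\r_0$ is exactly calibrated so that the hypothesis yields $|E\cap B_{r(x_0)/4}(x_2)|\geq\eta|B_{r(x_0)/4}(x_2)|$. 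The bound $r(x_0)<4\r_1$ gives $B_{3r(x_0)/2}(x_2)\ss B_{7\r_1}\ss B_1$, which is what forces the choice $\r_1=1/7$. Running Vitali on $\{\overline{B_{r(x_0)/4}(x_2)}\}$ then yields the claim directly; no boundary-layer estimate is needed. Your proof as written is therefore incomplete, and the missing shift-and-enlarge construction is the heart of the lemma — it is exactly why the specific constants $\r_0=1/6$, $\r_1=1/7$ appear in the statement.
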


The hypotheses in the previous lemma are satisfied by $F = \{u\leq 1\}$ and $E=\{u\leq M\}$, see Figure \ref{fig:mest}. This approach was used in \cite{MR2334822} to deduce the Lemma \ref{lem:dimdist}.

\begin{proof}
    Given $x_0 \in B_{\r_1}\sm F$, let $r = r(x_0) := 2\dist(x_0,F)$. Consider as well $x_1\in B_{r/2}(x_0) \cap B_{\r_1}$ such that
    \[
    B_{r/4}(x_1) \ss B_{r/2}(x_0) \cap B_{\r_1} \qquad \text{ and } \qquad B_{r/2}(x_0) \ss B_{3r/4}(x_1). 
    \]
    Figure \ref{fig:growing_ink} details the construction of $x_1$.

    \begin{figure}
    \centering
    \includegraphics[width=0.8\textwidth]{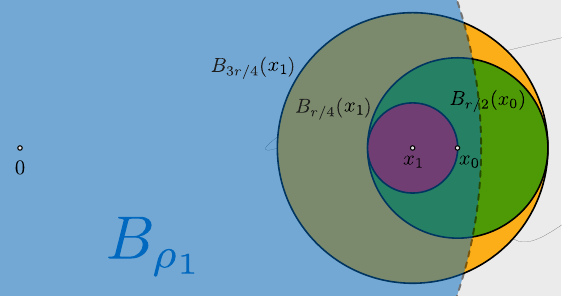}
    \caption{We may attempt to use Vitali's covering lemma on the set of balls given by $\{B_{\r_0 r}(x_0) \ | \ x_0 \in B_{\r_1}\sm F, r = r(x_0) := 2\dist(x_0,F)\}$ in order to estimate the measure of $(E\sm F)\cap B_{\r_1}$ from below, using the density hypothesis of the lemma. However, we find a problem because $B_{\r_0r}(x_0)$ may spill outside $B_{\r_1}$. We fix this issue by considering a new center $x_1$ over the segment from $0$ to $x_0$ with $|x_1-x_0|=r/4$, such that $B_{r/4}(x_1) \ss B_{r/2}(x_0) \cap B_{\r_1}$ and $B_{r/2}(x_0) \ss B_{3r/4}(x_1)$. In the exceptional case that $x_0=0$ we take $x_1=0$.}
    \label{fig:growing_ink}
    \end{figure}
    
    Given that $\overline{B_{\r_1}} \cap F \neq \emptyset$, we get $r\leq 4\r_1$. Then $B_{3r/2}(x_1)\ss B_{7\r_1}\ss B_1$, since $3r/2 + \r_1 \leq 7\r_1=1$. Because $B_{r/2}(x_0) \ss B_{3r/4}(x_1)$ and $\overline{B_{r/2}(x_0)} \cap F \neq \emptyset$, we also get $\overline{B_{3r/4}(x_1)} \cap F \neq \emptyset$. Then we can apply the hypothesis of the lemma to $B_{3r/2}(x_1)$ and deduce that
    \[
    \frac{|E\cap (B_{r/2}(x_0)\cap B_{\r_1})|}{|B_{r/2}|}\geq \frac{|E\cap B_{r/4}(x_1)|}{|B_{r/4}|}\frac{|B_{r/4}|}{|B_{r/2}|} \geq 2^{-n}\eta.
    \]
    
    By Vitali's covering lemma, we can extract a countable collection of disjoint balls $\mathcal B \ss \{B_{r(x_0)/2}(x_0) \ | \ x_0 \in B_{\r_1}\sm F\}$ such that $\sum_{B\in \mathcal B} |B| \geq 5^{-n}|B_{\r_1}\sm F|$. Then
    \[
    |(E\sm F)\cap B_{\r_1}| = \sum_{B\in \mathcal B} |E\cap (B\cap B_{\r_1})| \geq 2^{-n}\eta\sum_{B\in \mathcal B}|B| \geq 10^{-n}\eta|B_{\r_1}\sm F|,
    \]
    which concludes the proof.
\end{proof}

The localized measure estimate in \hyperlink{ex:par_eq1}{Problem 11} has a time shift in relation to the sets where $u$ dips below one ($\overline{B_{1/2}}\times\lbrack-1/2,0\rbrack$), and the set where the measure of $\{u\leq M\}$ is bounded from below ($B_\r\times(-1/2-\r,-1/2\rbrack$). This requires a technical modification of the covering arguments to obtain the corresponding weak Harnack inequality.

The dyadic parabolic decomposition was used by Lihe Wang in \cite{MR1135923} and more recently in the notes \cite{imbert:hal-00798300}. We define the dyadic cylinders generated from $\overline{Z_1} := \overline{Q_1}\times[-1,0]$ with a similar procedure that sub-divides the spatial lengths in halves and the temporal lengths in fourths, being consistent in this way with the scaling of the parabolic equation. A useful geometric result in this setting is the staking lemma in the first part of the next problem.

An alternative approach, using Vitali's covering lemma, was presented by Yu Wang \cite{MR3158522}, we also recommend the notes \cite{mooneyp}. In this setting, we require an analogue of the growing ink-spot lemma, in particular the construction in the Figure \ref{fig:growing_ink} using cylinders instead of balls. This is the content of the second part in the following problem.

\begin{exercise}[\hypertarget{ex:par_eq2}{\hyperlink{sol:par_eq2}{\textbf{Covering lemmas for parabolic equations.}}}]
    \item (Staking lemma) For every cylinder $Z = Q\times[t-r,t]$ and $m\geq 1$, let $Z^m = Q\times[t,t+mr]$. Show that for any collection $\mathcal Z$ of dyadic cylinders,
    \[
    \left|\bigcup_{Z\in \mathcal Z} Z^m\right| \geq \frac{m}{m+1}\left|\bigcup_{Z\in \mathcal Z} (Z^m\cup Z)\right|.
    \]

    \item (Construction for the parabolic growing ink-spot lemma) Given $(x_0,t_0)\in\R^n\times\R$ and $r>0$, let
    \[
    E^{\pm}_r(x_0,t_0) := \{(x,t) \in \R^n\times\R \ | \ \pm(t-t_0) > |x-x_0|^2, |t-t_0|<r^2\}.
    \]
    Show that there exists $\r\in(0,1)$ such that for every $(x_0,t_0) \in E^-_1(0,0)$ and $r\in(0,\sqrt{|t_0|})$, there exists $(x_1,t_1) \in E^+_r(x_0,t_0)\cap E^-_1(0,0)$ for which
    \[
    E^+_{\r r}(x_1,t_1) \ss E^+_r(x_0,t_0)\cap E^-_1(0,0).
    \]
\end{exercise}

\begin{exercise}[\hypertarget{ex:w2p2}{\hyperlink{sol:w2p2}{\textbf{Measure estimates for the Hessian II.}}} The diminish of distribution strategy can also be used on the distribution for the second derivative estimates stated in \hyperlink{ex:w2p}{Problem 12}. Recall the definition of the contact set $A_B^M$ from \hyperlink{ex:w2p}{Problem 12}, and let us introduce the Hardy–Littlewood maximal operator of $f\colon\W\to\R$ as 
\[
m(f)(x) := \sup_{r>0} \frac{1}{|B_r|}\int_{B_r(x)\cap\W}|f|.
\]\label{ex:w2p2}]
\item Given $[\l,\L]\ss(0,\8)$, there exist $\d,\eta\in(0,1)$ and $M\geq 1$, such that the following holds: Let $u\in C^2(B_{3\sqrt n})$, $f \in L^n(B_{3\sqrt n})$ with $\|f\|_{L^n(B_{3\sqrt n})}\leq \d$ such that $\mathcal M^-_{\l,\L}(D^2 u)\leq f$ and $A^1_{\{0\}}\cap \overline{Q_3} \neq \emptyset$. Then, for any cube $Q$ in the dyadic decomposition of the set
\[
(\overline{Q_1}\sm A^1_{\R^n})\cup \{x \in \overline{Q_1} \ | \ m(f^n) > \d^n\},
\]
we have $|Q\cap A^M_{\R^n}|\geq \eta|Q|$.
\item Given $[\l,\L]\ss(0,\8)$, there exist $\e\in(0,1)$ and $C\geq 1$ such that the following holds: Let $u\in C^2(B_{3\sqrt n})$, $f \in L^n(B_{3\sqrt n})$ such that $\mathcal M^-_{\l,\L}(D^2 u)\leq f$. Then
    \[
    \sup_{\m>0}\m^{\e}|\{|(D^2u)_-|_{\text{op}} \geq \m\}\cap Q_1|\leq C\1\osc_{B_{3\sqrt n}}u+\|f\|_{L^n(B_{3\sqrt n})}\2^\e.
    \]
\end{exercise}

\subsubsection{Summary of the Proofs}\label{sec:sum}

At this point, one already has all the results required to prove the diminish of oscillation lemma for the Pucci operators, Lemma \ref{lem:dim_osc2}. With it we finally settle the proof of the Krylov-Safonov theorem giving the interior Hölder estimates for non-linear equations, Theorem \ref{thm:ihe}. We have skipped some details in order to highlight the fundamental ideas. To gain a better view of the theory, let us recapitulate the main steps and compare them with the analogous results for the Laplacian. Finally, we quickly assemble the missing proofs.

\noindent\textit{Measure estimates:} For the Laplacian it is represented by the mean value property, proved as an application of the divergence theorem. For the Pucci operators, we proved in Lemma \ref{lem:abp} that for a super-solution, the measure of the lower contact set with a family of tests functions is bounded from below. It relies on a different integration theorem, the area formula. These proofs are the only ones where the uniform ellipticity is actually used. Corollary \ref{cor:mest} combines Lemma \ref{lem:abp} and Lemma \ref{lem:loc} to obtain a measure estimate localized at the ball $B_\r$. 

\begin{proof}[Proof of Corollary \ref{cor:mest}]
    Let $M_0$ be the constant from the localization Lemma \ref{lem:loc} with respect to the radius $\r_0=\r/2$. By this lemma, we get $u(x_0) \leq M_0$ for some $x_0 \in \overline{B_{\r/2}}$.
    
    Let $\theta$ and $\eta_0$ from Lemma \ref{lem:abp}. Applying this lemma to $\theta M_0^{-1} u((\r/2) x+x_0)$, we get $|\{u\leq \theta^{-1}M_0\}\cap B_{\r/2}(x_0)|\geq \eta_0(\r/2)^n$. Therefore, the desired result follows for $M := \theta^{-1}M_0$ and $\eta := \eta_0/|B_2|$.
\end{proof}

\noindent\textit{Weak Harnack inequality:} For the Laplacian it followed from the mean value property by a geometric observation (Figure \ref{fig:weak_har}). In the more general case, we found a way to iterate the measure estimate (Corollary \ref{cor:mest}) in order to get a geometric decay of the distribution $|\{u>M^k\}\cap Q_1|$ in Lemma \ref{lem:dimdist}.

\begin{proof}[Proof of Lemma \ref{lem:wharnack}]
Follows as in the proof of Lemma \ref{cor:dim_osc}, assuming first that for $\d\in(0,1)$ from Lemma \ref{lem:dimdist}, we have $\|f\|_{L^n(B_{3\sqrt n})} \leq \d$ and $\inf_{Q_3} u \leq 1$. The goal is to show that for every $\m>0$
\[
|\{u \geq \m\}\cap Q_1|\leq C\m^{-\e}.
\]
Otherwise, we use the homogeneity of the Pucci operator and apply the following reasoning to the renormalized function
\[
\bar u := \frac{u}{\inf_{Q_3}u+\d^{-1}\|f\|_{L^n(B_{3\sqrt n})}}.
\]

For $M$ and $\eta$ as in Lemma \ref{lem:dimdist}, it follows inductively and once again by the homogeneity of the operator, that for every integer $k\geq 1$,
\[
|\{u>M^k\}\cap Q_1| \leq (1-\eta)^k.
\]
To be precise, the inductive step follows by applying Lemma \ref{lem:dimdist} to $M^{-k}u$. Notice that for $\e := -\ln(1-\eta)/\ln M$, we can replace $(1-\eta)^{k}$ on the right-hand side with $(M^k)^{-\e}$.

Given $\m\geq 1$, let $k$ be a non-negative integer such that $M^k \leq \m <M^{k+1}$. Then, once we fix $C := M^\e$, we obtain
\[
|\{u>\m\}\cap Q_1| \leq |\{u>M^k\}\cap Q_1| \leq (1-\eta)^k = C(M^{k+1})^{-\e} \leq C\m^{-\e}.
\]
If $\m \in (0,1)$, the desired estimate in the distribution is true because $C\geq 1$.
\end{proof}

\noindent\textit{Diminish of the oscillation:} The proof of the interior Hölder estimate (Lemma \ref{thm:h}) follows from the dilation invariance of the equation, in the same way as for the Laplacian (proof of the Lemma \ref{cor:dim_osc}), but using the diminish of oscillation Lemma \ref{lem:dim_osc2} instead. Lemma \ref{lem:dim_osc2} relies on the weak Harnack inequality (Lemma \ref{lem:wharnack}).

\begin{proof}[Proof of Lemma \ref{lem:dim_osc2}]
    Assume without loss of generality that $0\leq u\leq 1$ in $B_1$. If
    \[
    \frac{|\{u\geq 1/2\}\cap Q_{1/(3\sqrt n)}|}{|Q_{1/(3\sqrt n)}|} \geq \frac{1}{2},
    \]
    we apply a rescaling of Lemma \ref{lem:wharnack} to get $u\geq \theta := C^{-1/\e}2^{-(1+2\e)/\e}$ in $B_{\r}\ss Q_{1/\sqrt n}$, provided that $\d := C^{-1/\e}2^{-(1+2\e)/\e}$ and $\r:=1/(2\sqrt n)$. Otherwise, we apply Lemma \ref{lem:wharnack} to $(1-u)$ to get a similar improvement of the upper bound of $u$.
\end{proof}

\begin{exercise}[\hypertarget{ex:fl}{\hyperlink{sol:fl}{\textbf{Fractional Laplacian.}}} Elliptic integro-differential operators, such as the fractional Laplacians, present in their own definition some sort of mean value property. The present problem aims to illustrate this claim from the comparison principle. For $\s\in(0,2)$ we define the fractional Laplacian $\D^{\s/2} = -(-\D)^{\s/2}$ in terms of its Fourier symbol $\widehat{\D^{\s/2}} = -|\xi|^{\s}$. From this definition we can evaluate the operator on $u \in L^1(\R^n)$ at a point $x$ where $u$ is regular enough ($C^2$ is enough)
\[
\D^{\s/2} u(x) = C\int_{\R^n} \frac{u(y+x)+u(y-x)-2u(x)}{|y|^{n+2\s}}dy.
\]
The constant $C$ that appears above is a positive number that depends only on the parameter $\s$ and the dimension $n$.]

\item (Comparison principle) Show that for every $u,v \in L^1(\R^n)$ such that $u\leq v$ and $u(x_0)=v(x_0)$ it holds that $\D^{\s/2} u(x_0)\leq \D^{\s/2} v(x_0)$.

\item (Weak Harnack) Show that for every $\s\in(0,2)$ there exists $C\geq 1$ such that for $u \in L^1(\R^n) \cap C^2(B_{3/4})$ non-negative with $\|(\D^{\s/2} u)_+\|_{L^\8(B_{3/4})} \leq 1$
\[
\inf_{B_{1/2}}u \leq 1 \qquad\Rightarrow\qquad
\int_{B_1\sm B_{3/4}} u \leq C.
\]

\item (H\"older estimate) Show that for every $\s\in(0,2)$ there exist $\a\in(0,1)$ and $C\geq 1$ such that the following H\"older estimate holds for $u \in L^1(\R^n) \cap C^2(B_{3/4})$
\[
\sup_{r\in(0,1)}r^{-\a}\osc_{B_r}u \leq C\1\osc_{B_1} u + \|u\|_{L^1(\R^n\sm B_1)} + \|\D^{\s/2} u\|_{L^\8(B_{3/4})}\2.
\]
\end{exercise}

\begin{exercise}[\hypertarget{ex:int_grad_est}{\hyperlink{sol:int_grad_est}{\textbf{Interior gradient estimate.}}}]
\item Show that for $[\l,\L]\ss(0,\8)$, there exist $\a\in(0,1)$ and $C\geq 1$, such that the following holds: Let $F \in C(\R^{n\times n}_{\text{sym}})$ be translation invariant and uniformly elliptic with respect to $[\l,\L]$, $\W\ss\R^n$ open and $u\in C^3(\W)$ with $F(D^2u)=0$ in $\W$. Then
\[
[Du]^{(1+\a)}_{C^{0,\a}(\W)} \leq C\osc_{\W}u.
\]
\end{exercise}

\begin{remark}
    Hypothesis $u\in C^3(\W)$ can be replaced by $u\in C^2(\W)$ in the previous problem. In fact, one can use the translation invariance and the interior Hölder estimate (Lemma \ref{thm:h}) to show that the difference quotient $v_\a = (u_{h,e}-u)/h^\a \in C^{0,\a}_{\text{loc}}(\W)$, uniformly in $h>0$. By the interpolation result in \cite[Lemma 5.6]{MR1351007} we recover that $u \in C^{0,2\a}_{\text{loc}}(\W)$ if $\a \leq 1/2$, and otherwise we get $u \in C^{1,2\a-1}_{\text{loc}}(\W)$. This procedure can be iterated all the way to an interior Hölder estimate for the gradient.
\end{remark} 

\begin{exercise}[\hypertarget{ex:lmp2}{\hyperlink{sol:lmp2}{\textbf{Local maximum principle - uniformly elliptic.}}}]
    \item Show that for $[\l,\L]\ss(0,\8)$ and $\e>0$, there exists some $C\geq 1$ such that the following maximum principle holds for $u\in C^2(B_1)$
\[
\sup_{B_{1/2}} u_+ \leq C\1\|u_+\|_{L^\e(B_1)} + \|(\mathcal M^+_{\l,\L} u)_-\|_{L^n(B_1)}\2.
\]
\end{exercise}

\begin{exercise}[\hypertarget{ex:harnack2}{\hyperlink{sol:harnack2}{\textbf{Harnack inequality - uniformly elliptic.}}}]
    \item Show that for $[\l,\L]\ss(0,\8)$ there exists some $C\geq 1$ such that for $u\in C^2(B_1)$, and $f \in L^n(B_1)$, both non-negative functions with
    \[
    \begin{cases}
        \mathcal M_{\l,\L}^-(D^2u) \leq f \text{ in } B_1,\\
        \mathcal M_{\l,\L}^+(D^2u) \geq -f \text{ in } B_1,
    \end{cases}
    \]
    we have
\[
\sup_{B_{1/2}}u \leq C\1\inf_{B_{1/2}}u + \|f\|_{L^n(B_1)}\2.
\]
\end{exercise}

\begin{exercise}[\hypertarget{ex:hess_est_convx}{\hyperlink{sol:hess_est_convx}{\textbf{Interior Hessian estimates for convex operators.}}}]
\item Show that for $[\l,\L]\ss(0,\8)$ there exists some $C\geq 1$ such that the following holds: Let $F \in C(\R^{n\times n}_{\text{sym}})$ convex, uniformly elliptic with respect to $[\l,\L]\ss(0,\8)$, and with $F(0)=0$. Let $u\in C^4(B_1)$ a solution of $F(D^2u)=0$ in $B_1$, $e\in \p B_1$, and $v_2 = \p_e^2 u$. Then
\[
\sup_{B_{1/2}} (v_2)_- \leq C\osc_{B_1} u.
\]
This is effectively a bound on the negative eigenvalues of the Hessian of $u$. Given that $u$ also satisfies $\mathcal M^-_{\l,\L}(D^2u)\leq 0$, the bound on the negative eigenvalues also implies a similar bound on the positive ones. All in all, we get the $C^{1,1}$ estimate
\[
\|D^2u\|_{L^\8(B_{1/2})} \leq C\osc_{B_1} u.
\]
\item Let $F \in C(\R^{n\times n}_{\text{sym}})$ convex and uniformly elliptic with respect to $[\l,\L]\ss(0,\8)$. Given $A \in \R^{n\times n}_{\text{sym}}$ and $u\in C^4(B_1)$ a solution of $F(D^2u)=0$ in $B_1$, consider $v_A := \div(AA^TDu) = (AA^T):D^2u$. Show that $v_A$ satisfies $\mathcal M_{\l,\L}^-(D^2v_A)\leq 0$ in $B_1$.
\item Show that for $[\l,\L]\ss(0,\8)$ there exists some $\theta \in(0,1)$ such that the following holds: Let $F \in C(\R^{n\times n}_{\text{sym}})$ convex, uniformly elliptic with respect to $[\l,\L]\ss(0,\8)$, and with $F(0)=0$. Let $u\in C^4(B_1)$ a solution of $F(D^2u)=0$ in $B_1$, with $D^2u(0)=0$. Then
\[
\min_{\substack{P \in \R^{n\times n}_{\text{sym}}\\P^2=P}} v_P \geq -1 \text{ in } B_1 \qquad\Rightarrow\qquad \min_{\substack{P \in \R^{n\times n}_{\text{sym}}\\P^2=P}} v_P \geq -(1-\theta)\text{ in } B_{1/2}.
\]
This is effectively a Hölder modulus of continuity for $|(D^2u)_-|_{\text{op}}$ at the origin. Once again, since $u$ also satisfies $\mathcal M^-_{\l,\L}(D^2u)\leq 0$, we also get a Hölder modulus of continuity for $|(D^2u)_+|_{\text{op}}$ at the origin. All in all, we conclude the $C^{2,\a}$ estimate
\[
[D^2u]_{C^{0,\a}(B_{1/2})} \leq C\osc_{B_1} u.
\]
\end{exercise}

\section{Degenerate Problems}

A natural question one may ask is: When is it possible to extend the theory whenever uniform ellipticity is not available? This actually the case of some of some of the most important models in elliptic equations such as the minimal surface equation
\begin{align}
    \label{eq:min_sur}
\div\1\frac{Du}{\sqrt{1+|Du|^2}}\2 = 0,
\end{align}
the $p$-Laplace equation ($p\geq1$)
\[
\div(|Du|^{p-2}Du) = 0,
\]
the Monge-Ampère equation ($D^2u\geq 0$)
\[
\det(D^2u) = f,
\]
and in the dynamic case, the porous media equation ($u\geq 0$)
\[
\p_t u = u\D u + |Du|^2.
\]

Free boundary problems such as the obstacle problem
\[
\min\{\D u,\varphi-u\}= 0,
\]
are also considered as degenerate elliptic, given that the uniform ellipticity only has an effect outside of the \textit{contact region} $\{u=\varphi\}$. 

In all these cases, we observe that $F$ is still monotone increasing on the Hessian; however, its rate of growth may go to zero or infinity depending on other features of the solution. Nevertheless, in all of the previous models we find some additional mechanism which complements the uniform ellipticity in order to establish suitable regularity estimates.

The result we present in this final section considers the case where the uniform ellipticity holds if the gradient is sufficiently large.

\subsection{Elliptic Equations That Hold Only Where the Gradient is Large}

This degeneracy consideration splits the domain of the equation into two regions. One where the gradient is bounded and the solution is automatically Lipschitz, and another where the solution satisfies a uniformly elliptic equation which should enforce some regularity. The problem is that we do not have any control over the boundary between these two regions, where the regularity may degenerate.

The following theorem was originally proven in \cite{MR3500837}.

\begin{theorem}\label{thm:int_hold_est_deg}
Given $[\l,\L]\ss(0,\8)$ and $\gamma\geq 0$, there exist $\a\in(0,1)$ and $C\geq 1$, such that the following holds: Let $u\in C^1(B_1)\cap C^2(\{|Du|>\gamma\}\cap B_1)$, and $f\in C(B_1)$ non-negative such that
\[
\begin{cases}
\mathcal M^-_{\l,\L}(D^2u) \leq f \text{ in } \{|Du|>\gamma\} \cap B_1,\\
\mathcal M^+_{\l,\L}(D^2u) \geq -f \text{ in } \{|Du|>\gamma\} \cap B_1,
\end{cases}
\]
Then
\[
\sup_{r\in(0,1)}r^{-\a}\osc_{B_r} u \leq C\1\osc_{B_1} u+\|f\|_{L^n(\{|Du|>\gamma\} \cap B_1)}\2.
\]
\end{theorem}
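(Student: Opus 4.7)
I would follow the same diminish-of-oscillation strategy that underlies Lemma \ref{lem:dim_osc2} and Theorem \ref{thm:h}, paying close attention to how the gradient threshold $\gamma$ rescales under the natural H\"older dilation. Consider the normalization $v(x) = M_k^{-1}(u(\r^k x) - c_k)$, for which $Dv(x) = (\r^k/M_k)\,Du(\r^k x)$; the Pucci inequalities for $v$ then hold on $\{|Dv|>\gamma_k\}$ with $\gamma_k := \gamma\,\r^k/M_k$. Aiming for H\"older decay $M_k = \r^{k\a}$ with $\a \in (0,1)$ gives $\gamma_k = \gamma\,\r^{k(1-\a)}\to 0$ as $k\to\8$, so the problem becomes effectively uniformly elliptic at small scales. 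The plan is therefore to establish a diminish-of-oscillation lemma in the regime when $\gamma$ is already below a critical threshold $\gamma_0$, and iterate it starting from the first scale at which the rescaled threshold drops below $\gamma_0$.

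The analytic core is thus a lemma providing constants $\gamma_0, \r, \theta, \d \in (0,1)$ depending only on $[\l,\L]$ such that, if $\gamma\leq \gamma_0$, $\osc_{B_1}u \leq 1$, and $\|f\|_{L^n(\{|Du|>\gamma\}\cap B_1)}\leq \d$, then $\osc_{B_\r} u \leq 1-\theta$. To obtain it I would revisit the sliding-paraboloid arguments of Lemma \ref{lem:abp} and Lemma \ref{lem:loc} and verify that they still yield the localized measure estimate of Corollary \ref{cor:mest} when the Pucci inequality is only assumed on $\{|Du|>\gamma\}$. The delicate point is that at a contact point $x_0$ between $u$ and a test function $\varphi_{y_0}$, the first derivative test fixes $Du(x_0) = D\varphi_{y_0}(x_0)$, and for the concave paraboloid family this gradient can be as small as $|x_0-y_0|$; if it falls below $\gamma$ then the equation is silent at that point. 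I would remedy this by enriching the test-function family with added linear parts $V\cdot x$ whose magnitude $|V|$ sits in a narrow band above $\gamma$, thereby forcing $|Du(x_0)|>\gamma$ at every contact point while preserving both the Hessian of the test functions (needed for the area-formula estimate) and the lower bound on the measure of the resulting contact set, at the cost of quantitatively adjusted constants. This is the one and only place where the degeneracy enters; everything else — from Lemma \ref{lem:abp} through the dyadic or growing-ink covering in Lemma \ref{lem:dimdist} — carries over unchanged.

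Given this lemma, the iteration is routine. Set $\a := \log(1-\theta)/\log(1/\r) \in (0,1)$ and let $k^*$ be the smallest non-negative integer such that $\gamma\,\r^{k^*(1-\a)}\leq \gamma_0$. For $k\geq k^*$ the rescaled problem falls in the small-$\gamma$ regime and the oscillation improves geometrically; for $k<k^*$ the trivial bound $\osc_{B_{\r^k}}u \leq \osc_{B_1}u$ is sufficient. Combining the two regimes gives $\osc_{B_{\r^k}}u \leq \r^{-k^*\a}\,\r^{k\a}\,(\osc_{B_1}u + \d^{-1}\|f\|_{L^n(\{|Du|>\gamma\}\cap B_1)})$, with the constant $\r^{-k^*\a}$ depending on $\gamma/\gamma_0$ (hence ultimately on $[\l,\L]$ and $\gamma$). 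The covering argument closing the proof of Theorem \ref{thm:int_hold_est} then upgrades this pointwise estimate at the origin to the weighted H\"older seminorm bound in the statement. The principal obstacle throughout is the test-function adaptation described in the second paragraph: it is precisely what allows the measure estimate to close across the degenerate set $\{|Du|\leq\gamma\}$, and getting the Hessian bounds and contact-set geometry to survive the enrichment is where the real work lies.
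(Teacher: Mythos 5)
Your overall framing is exactly right: the family is dilation invariant, the rescaled threshold $\gamma_k=\gamma\,\rho^{k(1-\alpha)}$ tends to zero, the localization Lemma~\ref{lem:loc} already uses test functions with gradient uniformly bounded away from zero and hence survives a smallness condition on $\gamma$, and the iteration and covering at the end are indeed routine. This all matches the paper's reduction.

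The gap is in the modification you propose for the measure estimate, which you rightly flag as the crux. Adding a linear part $V\cdot x$ to the concave paraboloid $\varphi_{y_0}^1(x)=-\frac{1}{2}|x-y_0|^2$ accomplishes nothing: completing the square gives $\varphi_{y_0}^1(x)+V\cdot x=\varphi_{y_0+V}^1(x)+c$ for a constant $c$, so the enriched family is the same family of paraboloids with translated centers. At a contact point one still has $Du(x_0)=-(x_0-(y_0+V))$, which vanishes when $x_0=y_0+V$, so the degenerate set $\{|Du|\leq\gamma\}$ is in no way avoided. The paper's actual fix, following Mooney \cite{MR3295593} (Lemmas~\ref{lem:abp_deg1}--\ref{lem:abp_deg3}), is structurally different: when the contact point $x_0$ of $u$ with $\varphi_0^1$ lands in an annulus $\overline{B_{1-\mu}}\setminus B_\mu$ away from the vertex, one forms composite paraboloids $\varphi_0^1+\varphi_{y_0}^1+c=\varphi_{y_0/2}^2+c'$ of doubled curvature whose new contact points automatically carry gradient comparable to $|x_0|\geq\mu$, hence above $\gamma$ once $\gamma\leq\mu/2$; when $x_0$ lands near the vertex, where the equation may be silent, Lemma~\ref{lem:abp_deg3} shows one can pass to a half-size ball and double the curvature again, and the proof of Lemma~\ref{lem:abp_deg1} runs a dyadic stopping-time recursion whose stopped cubes yield the density bound and whose non-stopped points are shown directly to satisfy $u\leq M$. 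The alternative one-shot approach of Imbert and Silvestre, recorded as Problem~21, replaces the paraboloid by a cusp $\varphi_{y_0}(x)=-C_0|x-y_0|^{1/2}$ whose unbounded gradient near the vertex pushes the contact set away from the vertex and forces $|Du|$ above $\gamma$ there. Either way the remedy changes the geometry of the test function, not merely perturbs it by an affine term, and the dyadic recursion inside Lemma~\ref{lem:abp_deg1} is an essential extra layer that your plan does not account for.
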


The equation given in the previous theorem is invariant by dilations. When we consider $u_r(x) := r^{-\a}u(rx)$, $\gamma_r := r^{1-\a}\gamma$, and $f_r(x) := r^{2-\a}f(rx)$, we obtain that
\[
\begin{cases}
\mathcal M^-_{\l,\L}(D^2u_r) \leq f_r \text{ in } \{|Du_r|>\gamma_r\} \cap B_{1/r},\\
\mathcal M^+_{\l,\L}(D^2u_r) \geq -f_r \text{ in } \{|Du_r|>\gamma_r\} \cap B_{1/r},\\
\|f_r\|_{L^n(\{|Du_r|>\gamma_r\} \cap B_{1/r})} = r^{1-\a}\|f\|_{L^n(\{|Du|>\gamma\} \cap B_1)}.
\end{cases}
\]
For $\a,r\in(0,1)$ the dilations make the threshold $\gamma_r$ and the forcing term $f_r$ smaller than the original ones.

This means that the proof of this theorem ultimately relies on the proof of a measure estimate, as the one appearing in Corollary \ref{cor:mest}, which followed from Lemma \ref{lem:abp} and Lemma \ref{lem:loc}. The remaining steps towards the interior Hölder estimate can be proved with the same arguments we have already presented.

Notice as well that, by performing a sufficiently large dilation, we can assume that $\gamma$ is as small as needed. In Lemma \ref{lem:loc} we observe that the test functions have gradients universally bounded away from zero. Hence the exact same proof applies under a smallness condition on $\gamma$. On the other hand, the result analogous to Lemma \ref{lem:abp} requires a new idea. Let us present the proof given in \cite{MR3295593}.

\begin{lemma}\label{lem:abp_deg1}
    Given $[\l,\L]\ss(0,\8)$ there exist $\d,\theta,\eta\in(0,1)$ and $M\geq 1$, such that the following holds: Let $\gamma \in [0,1/12]$, $u\in C^1(B_{1})\cap C^2(\{|Du|>\gamma\}\cap B_{1})$ and $f\in L^n(B_1)$ with $\|f\|_{L^n(B_{1})}\leq \d$, both non-negative functions with
    \[
    \mathcal M^-_{\l,\L}(D^2u) \leq f \text{ in } \{|Du|>\gamma\}\cap B_{1}.
    \]
    Then
    \[
        u(0) \leq \theta \qquad \Rightarrow\qquad|\{u\leq M\} \cap B_{1}|\geq \eta.
    \]
\end{lemma}

As before, we use test functions of the form $\varphi_{y_0}^M(x) = -\frac{M}{2}|x-y_0|^2$ and their corresponding contact set will be denoted by
\[
A^M_B = \bigcup_{y_0\in B} \argmin(u-\varphi_{y_0}^M).
\]
Notice that for $\m\in(0,1)$ and $\theta=(1-\m)^2/2$, we find that the hypotheses given in Lemma \ref{lem:abp_deg1} imply that for some $c \in [0,\theta]$, $\varphi_0^1+c$ touches $u$ from below at some point in $\overline{B_{1-\m}}$. We first consider the scenario where the contact point falls in the region where the gradient is large and the uniform ellipticity is available.

\begin{lemma}\label{lem:abp_deg2}
    Given $[\l,\L]\ss(0,\8)$ and $\m\in(0,1/2)$ there exist $\d,\eta\in(0,1)$, such that the following holds: Let $\gamma \in[0,\m/2]$, $u\in C^1(B_1)\cap C^2(\{|Du|>\gamma\}\cap B_1)$ with $u\geq \varphi_0^1$, $f\in L^n(B_1)$ non-negative with $\|f\|_{L^n(B_1)}\leq \d$ such that $\mathcal M^-_{\l,\L}(D^2u)\leq f$. Then for $R:=\overline{B_{1-\m}}\sm B_{\m}$
    \[
        A_{\{0\}}^1\cap R \neq \emptyset  \qquad \Rightarrow\qquad|\{u\leq 1\} \cap B_1|\geq \eta.
    \]
\end{lemma}

The following proof is closely related with the idea behind the solution of measure estimate for the Hessian (\hyperlink{ex:w2p}{Problem 12} and its \hyperlink{sol:w2p}{solution}). See also Figure \ref{fig:abp7}.

\begin{proof}[Proof of Lemma \ref{lem:abp_deg2}]
Let $x_0 \in A^1_{\{0\}}\cap R$ and consider for $y_0 \in B_{\m/10}(x_0)$
\[
\varphi_0^1 + (\varphi_{y_0}^1+\m^2/200) = \varphi_{y_0/2}^2 + c_{y_0}.
\]
For $y_0 \in B_{\m/10}(x_0)$ we have $\varphi_{y_0}^{1}(x_0) + \m^2/200>0$ and $\varphi_{y_0}^{1} + \m^2/200<0$ in $\R^n\sm B_{\m/5}(x_0)$. Therefore
\[
\emptyset \neq \argmin(u-\varphi_{y_0/2}^{2}) \ss \{u\leq \m^2/200\} \cap B_{\m/5}(x_0)\ss B_1.
\]
For every $x_1 \in \argmin(u-\varphi_{y_0/2}^{2})$ we get that
\begin{align*}
|Du(x_1)| &= 2|x_1-y_0/2|\\
&\geq 2(|x_0-x_0/2|-|x_0-x_1|-|y_0/2-x_0/2|)\\
&\geq 2(|x_0|/2 - \mu/5 -\mu/20)\\
&> \mu/2\\
&\geq \gamma.
\end{align*}
Finally, by the same argument as in the proof of Lemma \ref{lem:abp}, using the uniform ellipticity at the contact points and the area formula, we conclude that
\[
|\{u\leq 1\}\cap B_1|\geq |A^2_{B_{\m/20}(x_0/2)}| \geq \eta.
\]
\end{proof}

Now we consider the alternative scenario where the contact point falls in the region where the gradient is small. The following result could be considered as a localization-type estimate and it does not require any assumption on the derivatives of the function.

\begin{lemma}\label{lem:abp_deg3}
    For $\m=1/6$, $r=1/12$, $u\in C(B_1)$ with $u\geq \varphi_0^1$, and $x_0 \in \p B_{r}$ it holds that
    \[
        A_{\{0\}}^1\cap \overline{B_\m}\neq\emptyset \qquad \Rightarrow\qquad
        \emptyset\neq A_{\{x_0\}}^2 \ss \{u\leq 1\} \cap \overline{B_{(1-\m)/2}(x_0)}.
    \]
\end{lemma}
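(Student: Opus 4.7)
The plan is to exploit the fact that $\varphi_{x_0}^2$ is \emph{steeper} than $\varphi_0^1$, so that the ``gap function'' $q := \varphi_0^1 - \varphi_{x_0}^2$ is itself a convex paraboloid with a highly localized minimum. A direct expansion gives $q(x) = |x - 2x_0|^2/2 - |x_0|^2$, with unique minimizer at $2x_0$ and minimum value $-r^2$. Setting $w := u - \varphi_{x_0}^2$, the hypothesis $u \geq \varphi_0^1$ on $B_1$ yields $w \geq q$ pointwise, while any contact point $x^* \in \overline{B_\m}$ at which $u(x^*) = \varphi_0^1(x^*)$ satisfies $w(x^*) = q(x^*)$. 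The whole argument then reduces to purely geometric comparisons between $q$ on the exterior $\overline{B_1}\sm\overline{B_{(1-\m)/2}(x_0)}$ and the single value $q(x^*)$.

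First I would fix $\m \in (0, 1/3)$ and $r \in (0, (1-3\m)/6)$; this choice automatically implies $r + (1-\m)/2 < 1$. Under these constraints $x^* \in \overline{B_{(1-\m)/2}(x_0)}$ since $|x^* - x_0| \leq \m + r < (1-\m)/2$. Next, for $x$ outside this small ball in $\overline{B_1}$, the triangle inequality gives $|x - 2x_0| \geq |x - x_0| - |x_0| > (1-\m)/2 - r$, while $|x^* - 2x_0| \leq |x^*| + 2|x_0| \leq \m + 2r$. The single arithmetic condition $(1-\m)/2 - r > \m + 2r$, equivalent to $r < (1-3\m)/6$, then yields $q(x) > q(x^*)$ strictly throughout the exterior.

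With this in hand the conclusions follow easily. On the compact set $\overline{B_{(1-\m)/2}(x_0)} \ss B_1$ the continuous function $w$ attains its minimum $m_1 \leq w(x^*) = q(x^*)$. On the exterior in $B_1$, $w \geq q > q(x^*) \geq m_1$, so any global minimizer of $w$ in $B_1$ must lie in $\overline{B_{(1-\m)/2}(x_0)}$. This delivers both $A^2_{\{x_0\}} \neq \emptyset$ and the inclusion $A^2_{\{x_0\}} \ss \overline{B_{(1-\m)/2}(x_0)}$. For the height bound, any $x_1 \in A^2_{\{x_0\}}$ satisfies $u(x_1) = w(x_1) - |x_1 - x_0|^2 \leq q(x^*) \leq (\m + r)^2 < 1$, the last inequality again forced by the parameter choice.

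The only real obstacle is arranging the single inequality $(1-\m)/2 - r > \m + 2r$, which encodes the geometric fact that the unconstrained minimizer $2x_0$ of the gap $q$ sits well inside $\overline{B_{(1-\m)/2}(x_0)}$ while the contact point $x^*$ cannot be too far from it. Once this is set up everything --- nonemptiness of the contact set, confinement of the argmin, and the terminal height bound --- falls out with no further work. There is no PDE machinery or delicate analysis; the lemma is just a comparison between two paraboloids of different curvatures, quantifying the region where a function wedged above the milder one can be tangent from below to a translate of the steeper one.
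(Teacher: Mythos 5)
Your proof is correct and is essentially the paper's argument reformulated in terms of the gap function $q = \varphi_0^1 - \varphi_{x_0}^2 = \tfrac12|x-2x_0|^2 - r^2$: the paper writes the key comparison as $\max_{\overline{B_\m}} q \leq \min_{\R^n\sm\overline{B_{(1-\m)/2}(x_0)}} q$, while you encode the same fact as the distance inequality $|x^*-2x_0| \leq \m+2r < (1-\m)/2 - r < |x-2x_0|$, and both reduce to $r < (1-3\m)/6$ with $\m<1/3$. The compactness step, the confinement of the argmin, and the height bound $u(x_1)\leq q(x^*) < 1$ all match the paper's reasoning.
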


\begin{proof}[Proof of Lemma \ref{lem:abp_deg3}]
    Let us see that the given choice of $\m$ and $r$ guarantees that
    \begin{align}
        \label{eq:5}
    m &:= \max\{(\varphi_0^1-\varphi_{x_0}^2)(x) \ | \ x\in \overline{B_\m}\}\\
    \notag &\leq \min\{(\varphi_0^1-\varphi_{x_0}^2)(x) \ | \ x\in \R^n\sm\overline{B_{(1-\m)/2}(x_0)}\}\\
    \notag &\leq 1.
    \end{align}
    Once this is done, we observe that the paraboloid $\varphi^2_{x_0}+m$ is above $\varphi_0^1$ over $\overline{B_\m}$ and below $\varphi_0^1$ outside $\overline{B_{(1-\m)/2}(x_0)}$. Hence, the hypotheses on $u$ imply that $\varphi_{x_0}^2+m$ must cross $u$ in $\overline{B_{(1-\m)/2}(x_0)}$. By lowering the paraboloid we find a contact point in $\overline{B_{(1-\m)/2}(x_0)}$. See Figure \ref{fig:abp_deg2}.

\begin{figure}
    \centering
    \includegraphics[width=0.8\textwidth]{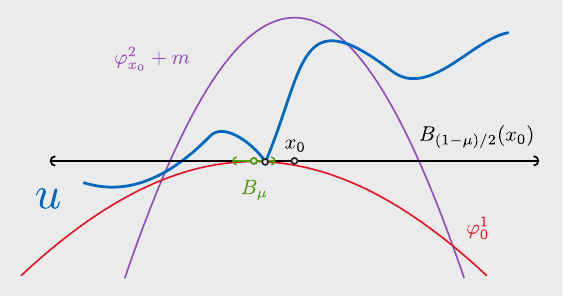}
    \caption{We arrange $\m$, $r$, and $m$ such that the graph of $u$ enters the region $\{(x,y) \in \R^n \times \R \ | \ \varphi_0^1(x) \leq y\leq  \varphi_{x_0}^2(x)+m\} \ss B_{(1-\m)/2}(x_0)\times (-\8,1]$.}
    \label{fig:abp_deg2}
\end{figure}

    We compute
    \begin{align*}
    \varphi_0^1(x)-\varphi_{x_0}^2(x) &= -\frac{1}{2}|x|^2 + |x-x_0|^2 = \frac{1}{2}|x|^2 - 2x\cdot x_0 + r^2 = \frac{1}{2}|x-2x_0|^2-r^2.
    \end{align*}
    
    The maximum of this convex paraboloid over $\overline{B_\m}$ is achieved at the furthest boundary point from $2x_0$
    \[
    \max\{(\varphi_0^1-\varphi_{x_0}^2)(x) \ | \ x\in \overline{B_\m}\} = \frac{1}{2}(\m+2r)^2-r^2.
    \]
    
    Given that $r \leq (1-\m)/2$, the vertex $2x_0$ falls in $\overline{B_{(1-\m)/2}(x_0)}$, then the minimum of $(\varphi_0^1-\varphi_{x_0}^2)$ over the complement of $\overline{B_{(1-\m)/2}(x_0)}$ is attained at the closest boundary point from $2x_0$. In particular it is less than 1
    \[
    \min\{(\varphi_0^1-\varphi_{x_0}^2)(x) \ | \ x\in \R^n\sm\overline{B_{(1-\m)/2}(x_0)}\} \leq \1\frac{1-\mu}{2}\2^2<1.
    \]
    Moreover,
    \[
    \min\{(\varphi_0^1-\varphi_{x_0}^2)(x) \ | \ x\in \R^n\sm\overline{B_{(1-\m)/2}(x_0)}\} = \frac{1}{2}\1\frac{1-\m}{2}-r\2^2-r^2.
    \]
    
    Finally we see that $\m+2r = 1/3 = \frac{1-\m}{2}-r$ guarantees \eqref{eq:5}.
\end{proof}

\begin{proof}[Proof of Lemma \ref{lem:abp_deg1}]
    We show instead the result over the domain $B_R$ with $R=3\sqrt{n}$, instead of $B_1$, so we assume $\|f\|_{L^n(B_R)}\leq \d$. As in Lemma \ref{lem:abp_deg3}, we fix $\mu=1/6$ and $r=1/12$.

    Let
    \[
    m_k(x_0) := \inf_{B_R} (u-\varphi_{x_0}^{2^k}), \qquad R_k(x_0):=\overline{B_{(1-\m)2^{-k} R}(x_0)}\sm B_{r2^{-k}R}(x_0), \qquad \theta := (1-\m)^2 R^2/2.
    \]
    From $u(0)\leq \theta$, we get $m_0(0) \leq \theta$ and $A_{\{0\}}^1\cap \overline{B_{(1-\m)R}} \neq \emptyset$.
    
    We select a set of dyadic cubes according to the following algorithm: At the $k^{th}$ stage of the algorithm, which starts from $k=0$, we consider a given subset of dyadic cubes from the $k^{th}$ generation of dyadic cubes starting from $Q_1$. One of these cubes, say $Q_{2^{-k}}(x_0)$, is selected if $A_{\{x_0\}}^{2^k}\cap R_k(x_0)\neq \emptyset$. Once a cube is selected, none of its descendants will be considered in the following stages. Otherwise, if the cube is not selected, its immediate descendants will be considered in the next stage.

    By an inductive argument using a rescaling of Lemma \ref{lem:abp_deg3}, we obtain that for every cube $Q_{2^{-k}}(x_0)$ that is considered in the $k^{th}$ iteration we also have that $A_{\{x_0\}}^{2^k}\cap \overline{B_{(1-\m)2^{-k}R}(x_0)} \neq \emptyset$.
    
    Also by induction and using Lemma \ref{lem:abp_deg3}, we see that if $Q_{2^{-(k+1)}}(x_1)$ is a cube considered in the $(k+1)^{th}$ iteration, descendant of $Q_{2^{-k}}(x_0)$, then
    \[
    m_{k+1}(x_1) \leq m_k(x_0) + 2^{-k}R^2 \leq \theta+2R^2.
    \]
    If $y_0\in Q_1$ is never covered by the selected cubes we find a nested sequence of dyadic cubes $Q_1 \supseteq Q_{1/2}(x_1) \supseteq \ldots$ such that $\bigcap_{k\geq 1}Q_{2^{-k}}(x_k) = \{y_0\}$ and
    \[
    \min\{(u-(\varphi_{x_k}^{2^k}+m_k(x_k)))(x) \ | \ x\in \overline{B_{\mu 2^{-k}R}(x_k)}\} = 0.
    \]
    This implies by the continuity of $u$ that $u(y_0)\leq \theta+2R^2$. If the measure of the set of points not covered is at least $1/2$, the result would now follow for any $\eta\leq 1/2$ and $M\geq \theta+2R^2$.
    
    On the other hand, using Lemma \ref{lem:abp_deg2} we see that if $Q_{2^{-k}}(x_0)$ is one of the selected cubes and now fix $M:=\theta+3R^2$, then
    \[
    |\{u\leq M\} \cap B_{2^{-k}R}(x_0)|\geq \eta_0|B_{2^{-k}R}(x_0)|.
    \]
    Assume that the measure of the union of all the selected cubes is at least 1/2 and let
    \[
    \mathcal B := \{B_{2^{-k}R}(x_0) \ | \ \text{$Q_{2^{-k}}(x_0)$ is one of the cubes selected by the algorithm}\}.
    \]
    By Vitali's covering lemma there exists a disjoint collection of balls $\mathcal B'\ss\mathcal B$ so that $\sum_{B\in \mathcal B'} |B| \geq 5^{-n}\left|\bigcup_{B\in \mathcal B}B\right| \geq 5^{-n}/2$. Given that for each $B\in \mathcal B'$ one has $|\{u\leq M\} \cap B|\geq \eta_0|B|$, we obtain that
    \[
    |\{u\leq M\} \cap B_R| \geq \sum_{B\in \mathcal B'} |\{u\leq M\} \cap B| \geq \eta := 5^{-n}\eta_0/2,
    \]
    which conbcludes the proof.
\end{proof}

\begin{exercise}[\hypertarget{ex:imbsil}{\hyperlink{sol:imbsil}{\textbf{Imbert-Silvestre's Approach.}}}]
    \item Assume the hypotheses of Lemma \ref{lem:abp_deg1} with $\theta=1$. Consider the family of test functions of the form
    \[
    \varphi_{y_0}(x) := -C_0|x-y_0|^{1/2},
    \]
    and the contact set
    \[
    A := \bigcup_{y_0 \in \{u > M\}\cap B_{1/4}} \argmin(u-\varphi_{y_0}).
    \]
    Show that for some appropriate choice of $C_0,M\geq 1$ and $\gamma_0,\d,\eta\in(0,1)$, we have $A \ss \{u \leq M\}$ with $|A|\geq \eta$.
\end{exercise}

\subsubsection{Further Developments}

The methods described in \cite{MR3500837, MR3295593} have proven to be remarkably versatile, extending their applicability to various equations. The cusp test functions introduced in \cite{MR3500837} were used by Silvestre and Schwab to extend the interior regularity estimates for parabolic integro-differential equations in \cite{MR3518535}. Recently, Pimentel, Santos, and Teixeira further advanced this idea to derive higher-order fractional estimates in \cite{MR4462186}. Moreover, in collaboration with Santos, we revisited the regularity theory for the porous medium equation, as presented in \cite{MR4557323}, by adapting Mooney's proof to a parabolic setting.

\subsection{Quasi-Harnack Inequality}

Degeneracy can also manifest itself across scales. For example, when modeling a PDE using finite-difference schemes, the continuous formulation of uniform ellipticity breaks down at the level of the discretization. However, if the numerical scheme approximates a uniformly elliptic equation, we expect that the discrete solution will approximate the continuous solution over large scales, inheriting with it the classical manifestations of uniform ellipticity \cite{MR1371593}.

A recent work by De Silva and Savin in \cite{MR4201786} proposes a weak notion of solution for equations where the uniform ellipticity manifests itself from a given scale onward. Their main result is a weak Harnack-type inequality. In this case, the alternative mechanism that compensates for the lack of uniform ellipticity is a measure estimate at microscopic scales. We discuss this result in more detail in the recent survey \cite{MR4658084}.

In addition to the applications already discussed to numerical schemes, the quasi-Harnack inequality can also be applied in the homogenization of elliptic problems with degeneracies, as studied in \cite{MR3265174}. It was also shown in \cite{MR4201786} that uniformly elliptic integro-differential equations of order $\s$ close to two fit also within the framework of the quasi-Harnack inequality. In this way, it provides a new proof for the Harnack inequality of Caffarelli and Silvestre \cite{MR2494809}.

A particular attractive feature of the quasi-Harnack inequality is that it does not require an actual partial differential equation. This is the idea behind the above mentioned applications. This versatility allowed to implement this approach to almost minimizers in calculus of variations in \cite{MR4126326}.

\subsection{Small Perturbation Solutions for Elliptic Equations}

The previous framework for degenerate equations does not apply in some fundamental equations coming from the calculus of variations, such as the minimal surface equation \eqref{eq:min_sur} or the $p$-Laplacian. In both of these cases, the ellipticity degenerates as the gradient grows.

Caffarelli introduced a perturbative method in \cite{MR1005611} to establish higher regularity estimates for solutions of uniformly elliptic equations, often referred to as \textit{regularity by compactness} or the \textit{improvement of flatness}. Here, ``flatness" means that a solution is assumed to be uniformly close to a prescribed profile. This strategy was inspired by De Giorgi's regularity theorem for minimal surfaces \cite{DeGiorgi1960}. The core idea is that if a solution is uniformly close to a smooth solution, it inherits the estimates of the corresponding linearization.

This approach quickly provided alternative proofs for regularity estimates in certain degenerate equations. Caffarelli and Córdoba applied it to the minimal surface equation in \cite{MR1190161}, while Wang explored estimates for the $p$-Laplace equation in \cite{MR1264526}.

In \cite{MR2334822}, Savin extended these estimates to include operators $F=F(M,p,z,x)$ that only need to be uniformly elliptic in the neighborhood of smooth solutions. Notably, this result enabled the treatment of equations that become degenerate as $(M,p,z)$ grows large, complementing the lack of uniform ellipticity whenever $(M,p,z)$ belongs to an unbounded regions. This approach allowed Savin to establish a long-standing conjecture of De Giorgi about the level sets of semi-linear equations arising from the Ginzburg-Landau energy in \cite{MR2480601}.

\subsubsection{Further Developments}

The groundbreaking work in \cite{MR2334822} has paved the way for numerous significant developments in the field. Although there are too many to comprehensively cover all of them here, some noteworthy results include the following research directions.

\noindent\textit{Free Boundary Problems:} De Silva developed the perturbative approach for the Bernoulli free boundary problem (also known as the One-Phase or Alt-Caffarelli problem), starting in \cite{MR2813524}. In this case, the free boundary condition linearizes to a zero Neumann type problem on the half-space, or equivalently the non-local problem $\D^{1/2}u=0$. A parabolic counterpart was studied in \cite{MR3983138}. A setup leading to the obstacle problem for $\D^{1/2}$ was studied in \cite{MR3916702} and used in \cite{MR4285137} to analyze the branching points in a two-phase free boundary problem.

\noindent\textit{Partial and Boundary Regularity for Uniformly Elliptic Equations:} Armstrong, Silvestre, and Smart applied this method to develop partial regularity results for fully non-linear equations in \cite{MR2928094}. One should keep in mind that, in general, uniformly elliptic problems may develop singularities as the one exhibited by \cite{MR3125267}. However, the interesting result proved in \cite{MR2928094}, shows that those singularities must have positive Hausdorff codimension (and hence zero measure). Silvestre and Sirakov proved in \cite{MR3246039} that the singular set can only be found in the interior of the domain of the equation.

\noindent\textit{Optimal Regularity for Degenerate Equations:} Colombo and Figalli developed regularity estimates for degenerate equations arising from traffic congestion models in \cite{MR3133426}. In collaboration with Pimentel, we demonstrated in \cite{MR4249793} the continuity of $|Du|$, where $u$ solves the gradient-constrained problem $\max\{1-|Du|,\Delta u+1\}=0$.

In \cite{MR3714836}, the paraboloid method is implemented to show a Harnack inequality for singular elliptic equations that are $(D^2u)^{-1}$-like. These arise as linearizations of the Monge-Ampère equation.

\noindent\textit{Non-Local Minimal Surfaces:} Caffarelli, Roquejoffre, and Savin established regularity estimates for non-local minimal surfaces in \cite{MR2675483}.

\noindent\textit{Parabolic and Non-Local Counterparts:} The technique was extended to parabolic problems by Wang in \cite{MR3158522}, and for non-local equation by Yu in \cite{MR3605294}.

\section{Open Problems}

One of the most famous open problems in the field is characterizing the dimension for which the equation $F(D^2u)=0$ has $C^2$ estimates.

Nirenberg, using complex analysis tools in \cite{MR0064986}, showed that $C^{2,\a}$ estimates are available in dimension 2. Recall that in \hyperlink{ex:hess_est_convx}{Problem 19} we saw that $C^{2,\a}$ estimates are available for convex/concave operators in any dimension. Some further generalizations have been studied in \cite{MR1995493,MR1793687}. However, it is known that in general we may find singularities starting in dimension $5$, \cite{MR3125267}.

\textbf{Do solutions of the uniformly elliptic equation $F(D^2u)=0$ have interior $C^2$ estimates or do singularities also develop in dimension 3 or 4?}

Whenever singularities develop for the equation $F(D^2u)=0$, it was shown in \cite{MR2928094} that they have positive Hausdorff codimension. This codimension is at least the exponent $\e$ in the measure estimate for the Hessian (\hyperlink{ex:w2p2}{Problem 14}), which yields the question of how large could $\e$ actually be. In the same article it was observed that in dimension 2, one must have $\e \leq C(\L/\l)^{-1}$ and conjectured that this is actually optimal.

In a recent approach to the Krylov-Safonov theory by Mooney \cite{MR3952774}, it has been shown that one can take $\e \geq C_n(\L/\l)^{1-n}$, settling the previous conjecture in dimension 2; see also \cite{MR4088812}. \textbf{It was conjectured in \cite{MR3952774} that this estimate is actually the optimal one in any dimension.}

One may also wonder about the optimal integrability exponent $p$ allowed as a forcing term in a uniformly elliptic equation in order to get interior Hölder estimates. As we know from the Sobolev and Morrey embeddings, this exponent should be strictly greater than $n/2$. For uniformly elliptic operators, we just saw that the theory holds for $p\geq n$. Due to fundamental estimates arising from harmonic analysis in \cite{MR0771392}, it was shown in \cite{MR1237053} that the optimal $p$ can always be taken slightly smaller in the interval $(n/2,n)$. \textbf{The actual dependence of the optimal integrability exponent $p$ on the dimension and the ellipticity constants remains open.}

A quite challenging question related to the integrability exponent for non-local equations of order $\s \in(0,2)$ is the following one: In contrast to the second-order theory, the non-local ABP estimate proved in \cite{MR2494809} depends on the $L^\8$-norm for the forcing term. \textbf{It is expected that a maximum principle also holds in terms of the $L^p$-norm for the forcing term, for some $p \in (n/(2\s),\8)$.} For the fractional Laplacians this is a consequence of the $L^p$ bounds for the Riesz transform and the Sobolev embedding. In the uniformly elliptic setting, the proof of this conjecture is only known under restrictive assumptions on the kernel \cite{MR2968592,MR4354731}. 

Similar questions can also be posed regarding the optimal Hölder exponent. So far, it seems that new techniques may be required in a dimension greater than or equal to 3. See \cite{MR4186265} for some very interesting conjectures in the case of the $p$-Laplacian.

The article \cite{MR3500837}, which studied Hölder estimates for elliptic equations that hold whenever the gradient is large (Theorem \ref{thm:int_hold_est_deg}), posed two interesting problems. The first was to extend the estimate for equations with drift term. This was solved in \cite{MR3295593}, whose strategy is presented in this note (in the absence of drift).

The second problem remains open: Is it possible to get a Hölder estimate on a solution of the parabolic problem
\[
\begin{cases}
    \p_t u - \mathcal M^-_{\l,\L}(D^2u) \geq -f \text{ in } \{|Du|>\gamma\}\cap B_1\times(-1,0],\\
    \p_t u - \mathcal M^+_{\l,\L}(D^2u) \leq f \text{ in } \{|Du|>\gamma\}\cap B_1\times(-1,0].
\end{cases}
\]
Notice that any $u=u(t)$, independent of $x$, is automatically a solution for any forcing term $f\geq 0$. This shows that we can not expect a regularity estimate in time. However, \textbf{we expect that a Hölder estimate exclusively in space holds true.}

Other reasonable extension in the static case would be to replace the constant $\gamma$ by some non-negative function $\gamma \in L^p$. For $p>n$, we have by Morrey's inequality that $|Du|\leq \gamma$ implies $u \in C^{0,1-n/p}$. \textbf{Would the Theorem \ref{thm:int_hold_est_deg} hold under the assumption $\gamma \in L^p$ for some $p>n$?}

\section{Ideas and References for the Problems}

% \noindent\hypertarget{sol:morrey}{\hyperlink{ex:morrey}{\textbf{Morrey's Inequality}}} 

% \noindent\textbf{1.} Use the fundamental theorem of calculus and Hölder's inequality.

% \noindent\textbf{2.} Let $\r\in (0,r)$ and $\theta \in \p B_1$. By the fundamental theorem of calculus
% \[
% u(\r\theta) - u(0) = \int_0^\r Du(t\theta)\cdot \theta dt.
% \]
% Integrating over $\theta \in \p B_1$
% \[
% \frac{1}{\r^{n-1}}\int_{\p B_\r} |u-u(0)| \leq \int_0^\r \1\frac{1}{t^{n-1}}\int_{\p B_t} |Du| \2 dt = \int_{B_\r} \frac{|Du(x)|}{|x|^{n-1}}dx \leq \int_{B_r} \frac{|Du(x)|}{|x|^{n-1}}dx
% \]
% A final integration from $\r=0$ to $\r=r$, after having multiplied by $\r^{n-1}$, gives the desired identity.

% \noindent\textbf{3.} By the triangle inequality $\osc_{B_r}u \leq 2\sup_{B_r}|u-u(0)|$. Once again by the triangle inequality, we get that for $x\in B_r$ with $r\in(0,1/2)$
% \[
% |u(x)-u(0)| \leq \fint_{B_r\cap B_r(x)} (|u-u(0)| + |u-u(x)|).
% \]
% Given that $B_{r/2}(x/2) \ss B_r\cap B_r(x)$
% \[
% |u(x)-u(0)| \leq 2^n\1\fint_{B_r} |u-u(0)| + \fint_{B_r(x)} |u-u(x)|\2.
% \]
% Hence the desired result follows by Hölder's inequality using that $1/|x|^{n-1} \in L^q(B_1)$ for $q<n/(n-1)$.

\noindent\hypertarget{sol:harnack}{\hyperlink{ex:harnack}{\textbf{Harnack's inequality}}}

\noindent\textbf{1.} Use the mean value formula and the fact that for $r\in (0,1/3)$ and $x,y\in B_r$ one has $B_{1-3r}(y) \ss B_{1-r}(x) \ss B_1$.

\noindent\textbf{2.} Follows by using $u(0)$ instead of $\fint_{B_{1/3}}u$ in the proof of Lemma \ref{lem:dim_osc}. We also replace $B_{1/3}$ by $B_{1/4}$.

\noindent\hypertarget{sol:cvx}{\hyperlink{ex:cvx}{\textbf{Concave/Convex functions}}}

\noindent\textbf{1.} Consider a supporting plane for the graph of $u$ at $x_0 \in \argmin_{\overline{B_r}}u \ss \p B_r$.

\noindent\textbf{2.} The set $\{u \geq \sup_{B_r}u\}$ contains a ball of diameter $(1-r)$. We can then conclude by applying Markov's inequality.

\noindent\hypertarget{sol:weyl}{\hyperlink{ex:weyl}{\textbf{Weyl's lemma}}}

\noindent\textbf{1.} Use polar coordinates for the integration defining the convolution.

\noindent\textbf{2.} Using the Taylor expansion around zero for $x = \e\theta$ and $\theta \in B_1$
\[
u(x) - u(0) = \e Du(0)\cdot \theta + \frac{\e^2}{2} \theta\cdot D^2u(0)\theta + o(\e^2).
\]
The first term is odd and its integral over $B_\e$ vanishes. The last term is an error that vanishes faster than $\e^2$ as $\e\to 0$. Hence, we just keep the quadratic terms in the limit. Once again, we notice the odd symmetry of $\theta_i\theta_j$ for $i\neq j$ to keep only the quadratic terms of the form $\theta_i^2$ in the integration.

Using spherical symmetry once again
\[
\int_{B_1}\theta_i^2d\theta = \frac{1}{n} \int_{B_1} |\theta|^2 d\theta = \frac{|\p B_1|}{n}\int_0^1 r^{n+1} dr = \frac{|B_1|}{n+1}.
\]

Details can be consulted in \cite[Theorem 1.8 in Section 1.2]{MR2777537}.

\noindent\hypertarget{sol:max_prin}{\hyperlink{ex:max_prin}{\textbf{Maximum principle}}}

\noindent\textbf{1.} For $\e>0$, consider $\phi_\e \in C^\8(\R)$ with $0 \leq \phi_\e' \leq 1$, $\phi_\e''\geq0$, and
\[
\phi_\e(z) = \begin{cases}
    z-\e \text{ if } z\geq 2\e,\\
    0 \text{ if } z \leq 0.
\end{cases}
\]
Then we apply the mean value property to $v_\e := \phi_\e(u)$ and conclude after sending $\e\to0^+$. Notice that
\begin{align*}
(\D v_\e)_- &= \max\{-\phi_\e'(u) \D u - \phi_\e''(u) |Du|^2,0\} \leq \mathbbm 1_{\{u>0\}} (\D u)_-.
\end{align*}

\noindent\textbf{2.} Apply the previous result to $v(y) = u(x+2Ry)$ for every $x\in \W$.

\noindent\hypertarget{sol:bdry_reg}{\hyperlink{ex:bdry_reg}{\textbf{Boundary regularity}}}

\noindent\textbf{1.} Assume as usual that $\sup_{B_1} u\leq 1$ and $\|\D u\|_{L^p(\W)}\leq \d$ is small enough. From the estimate \eqref{eq:7} in the previous exercise we get that
\[
C\1\inf_{B_{1/3}}(1-u) +\d\2 \geq \int_{B_{1/3}}(1-u) \geq |\{u\leq 0\}\cap B_{1/3}| \geq \eta|B_{1/3}|.
\]
Therefore, we can choose $\theta = \d = \frac{|B_{1/3}|}{2C}\eta$ to get $\inf_{B_{1/3}}(1-u)\geq \theta$.

By iterating this estimate, we get the desired modulus of continuity from above. The one from below can be obtained by applying the same reasoning to $-u$.

\noindent\textbf{2.} Let $\a_0 \in (0,1)$ be the exponent from the interior Hölder estimate Lemma \ref{cor:dim_osc}, $\a_1 \in (0,1)$ the exponent from the previous part, and let $\a = \min\{\a_0,\a_1,2-n/p\}$.

Given two distinct points $x,y\in \W$ such that without loss of generality $\dist(y,\p\W)\leq \dist(x,\W) =: r$, we consider two cases:

If $y\in B_r(x)$, then by the Lemma \ref{cor:dim_osc}
\begin{align*}
\frac{|u(y)-u(x)|}{|y-x|^{\a}} &\leq r^{\a_0-\a}\frac{|u(y)-u(x)|}{|y-x|^{\a_0}}\leq C\1r^{-\a}\osc_{B_r(x)}u + R^{2-n/p-\a}\|\D u\|_{L^p(\W)}\2
\end{align*}
Let $x_0\in\p\W$ such that $|x-x_0|=\dist(x,\p\W)$. By the previous part and the maximum principle, we estimate the first term in the right-hand side for $r\leq 2R$
\[
r^{-\a}\osc_{B_r(x)}u \leq (2R)^{\a_1-\a}r^{-\a_1}\osc_{B_{2r}(x_0)}u \leq C\1\osc_{\W} u + \|\D u\|_{L^p(\W)}\2 \leq C\|\D u\|_{L^p(\W)}.
\]

If instead $|y-x|\geq r$, then we just have to apply the previous boundary estimate and the maximum principle twice
\[
\frac{|u(y)-u(x)|}{|y-x|^\a} \leq \frac{|u(y)|}{\dist(y,\p\W)^\a}+\frac{|u(x)|}{\dist(x,\p\W)^\a} \leq C\1\osc_{\W} u + \|\D u\|_{L^p(\W)}\2 \leq C\|\D u\|_{L^p(\W)}.
\]

\noindent\hypertarget{sol:lmp}{\hyperlink{ex:lmp}{\textbf{Local maximum principle}}}

\noindent\textbf{1.} Let $\theta>0$ to be fixed sufficiently small and assume by contradiction that $u < (1+\theta)u(0)$ in $B_1$. The idea is that $\|u_+\|_{L^\e(B_1)}\leq 1$ and the mean value property for $-u$ compete in opposite directions when we estimate the measure of $\{u\geq u(0)/2\}\cap B_{1/3}$.

Let $M\geq 1$ be sufficiently large such that by Markov's inequality
\begin{align}
    \label{eq:1}
    \frac{|\{u\geq u(0)/2\}\cap B_{1/3}|}{|B_{1/3}|}\leq \frac{(u(0)/2)^{-\e}}{|B_{1/3}|} \|u_+\|_{L^\e(B_1)}^\e \leq CM^{-\e} < \frac{1}{2}.
\end{align}

By a combination of the Markov inequality and the mean value property applied to $v = (1+\theta)u(0)-u > 0$
\begin{align*}
\frac{|\{u < u(0)/2\}\cap B_{1/3}|}{|B_{1/3}|} \leq \frac{|\{v>u(0)/2\}\cap B_{1/3}|}{|B_{1/3}|} \leq 2u(0)^{-1}\fint_{B_{1/3}}v.
\end{align*}
Applying the mean value formula to $v$ we get $\fint_{B_{1/3}}v \leq v(0) + C = \theta u(0) + C$ and obtain the estimate
\[
\frac{|\{u < u(0)/2\}\cap B_{1/3}|}{|B_{1/3}|} \leq 2\theta + CM^{-1}.
\]
Hence, by taking $M$ sufficiently large and $\theta$ sufficiently small, we enforce the right-hand side to be less than $1/2$, contradicting \eqref{eq:1}.

\noindent\textbf{2.} The idea proceeds once again by contradiction. Let $x_1 \in \argmax_{\overline{B_{1/2}}}u$ with $u(x_1)=M$ to be fixed sufficiently large. One can use the previous part to obtain a sequence of points $\{x_k\} \ss \overline{B_{3/4}}$ with where $u(x_k) \geq (1+\theta)u(x_{k-1}) \geq \ldots \geq (1+\theta)^kM \to \8$, contradicting the continuity of $u$. To prevent that the points escape from $\overline{B_{3/4}}$, one can also show that we can make this choice with $|x_{k+1}-x_{k}|$ controlled by a geometric vanishing sequence.

The same argument as in the previous part shows that for some $\s \geq 1$ sufficiently large, it holds that if $B_{3r}(x_0)\ss B_1$ with $r=\s u(x_0)^{-\e/n}$, then
\[
\begin{cases}
    \|u_+\|_{L^\e(B_1)} \leq 1,\\
    \|(\D u)_-\|_{L^p(B_1)} \leq 1,\\
    u(x_0)\geq M,
\end{cases} \qquad\Rightarrow\qquad \sup_{B_r(x_0)}u\geq (1+\theta)u(x_0).
\]
Indeed, we only need to modify the upper estimate on the measure of $\{u\geq u(x_0)/2\}\cap B_r(x_0)$ by
\[
\frac{|\{u\geq u(x_0)/2\}\cap B_r(x_0)|}{|B_r(x_0)|}\leq C\s^{-n} < \frac{1}{2}.
\]
The rest of the argument proceeds in the same way as before.

Back to the sequence, one gets that $u(x_k)\geq (1+\theta)^kM$ and
\[
|x_{k+1}-x_{k}| \leq \s u(x_k)^{-\e/n} \leq \s M^{-\e/n} (1+\theta)^{-k\e/n}.
\]
By choosing $M$ even larger, we can enforce that
\[
\s M^{-\e/n}\sum_{k=0}^\8 (1+\theta)^{-k\e/n} \leq \frac{1}{4}.
\]

Further details can be consulted in \cite[Lemma 4.7 and the proof of Lemma 4.4]{MR1351007}.

\noindent\hypertarget{sol:he}{\hyperlink{ex:he}{\textbf{Heat equation}}}

\noindent\textbf{1.} (Mean Value Property) The main idea is to consider $v=\eta u$, for some non-negative and compactly supported cut-off function $\eta \in C^\8_c(B_1\times(-1,0])$, so that $v$ would now satisfy the heat equation in $\R^n\times (-\8,0]$.

Given that
\[
\p_t v - \D v = \eta f + (\p_t \eta - \D \eta)u -2D\eta\cdot Du.
\]
Then, if $\eta(0,0)=1$,
\begin{align*}
v(0,0) = u(0,0) &= \iint (f\eta +(\p_t \eta - \D \eta)u -2D\eta\cdot Du)H_{-t}dxdt\\
&= \iint (f\eta H_{-t} +[(\p_t \eta + \D \eta)H_{-t}+2D\eta\cdot DH_{-t}]u)dxdt\\
&= \iint (f\eta H_{-t} +[(\p_t \eta + \D \eta) + t^{-1}x\cdot D\eta]H_{-t}u)dxdt.
\end{align*}
Hence we take
\[
\begin{cases}
    K_2 = \eta H_{-t},\\
    K_1 = [(\p_t \eta + \D \eta)+ t^{-1}x\cdot D\eta]H_{-t}.
\end{cases}
\]
We can easily see that $K_2\geq 0$, now we need to choose $\eta$ such that $(\p_t \eta + \D \eta)+ t^{-1}x\cdot D\eta\geq 0$. The function $\eta = (1-2|x|^2+4nt)_+$ works, but it is not smooth and the above computations cannot be applied directly. Instead, we may consider a modification of this candidate by approximating the positive part with a smooth function as was done, for example, in the \hyperlink{sol:max_prin}{solution of Problem 5}.

See \cite[Chapter 2.3]{MR2597943} for an alternative way to construct the mean value property for the heat equation.

\noindent\textbf{2.} (Weak Harnack) In the previous construction we have that for some $\r\in(0,1/2)$ sufficiently small, $\min_{\overline{B_{\r}}\times [-\r,-2\r]}K_1>0$.

Using that $K_2\leq H_{-t}$ and the scaling symmetry of $H_t$, we get that after two iterations of Hölder's inequality 
\[
\iint fK_2 \geq -C \int_{-1}^0 \|f_-(t)\|_{L^p(B_1)}t^{-n/(2p)}dt \geq -C\|f_-\|_{L^q_t((-1,0]\to L^p_x(B_1))}.
\]
Notice that the second iteration requires $2>n/p+2/q$ to integrate $t^{-nq'/(2p)}$ with $q'$ the conjugate exponent of $q$.

Then, putting together this information with the previous part
\[
\iint_{B_{\r}\times (-\r,-2\r]}u \leq C(u(0,0) + \|f_-\|_{L^q_t((-1,0]\to L^p_x(B_1))}).
\]
The conclusion can be finally obtained from this estimate by a geometric argument, similar to that illustrated in Figure \ref{fig:weak_har} and adapted to the parabolic setting.

% \noindent\hypertarget{sol:sun}{\hyperlink{ex:sun}{\textbf{Rising Sun Lemma}}}

% \noindent\textbf{1.} The function $Tu(x) := \max_{[x,1)}u$ is the smallest possible decreasing function which is above $u$. If $u$ is continuous and bounded, we see that $Tu$ is also continuous and bounded.

% Given $m>0$ and $u_m(x) := u(x)-mx$, we have $S^m = \{Tu_m > u_m\}$. As an open set, it can be decomposed as a at most countable union of disjoint open intervals. If $(a,b)\ss(0,1)$ is one of such intervals we get that $u_m(a)\leq u_m(b)$, hence
% \[
% m(b-a) \leq u(b)-u(a) \leq \int_a^b|u'|.
% \]
% Then $|S^m|\leq m^{-1}\int_0^1|u'|$ follows by additivity.

% For the other inequality, we have to notice that if $x \in A^m$, then $u'(x) \leq m$.

% More details can be consulted in \cite[Section 3.3]{MR2129625}.

\noindent\hypertarget{sol:mp}{\hyperlink{ex:mp}{\textbf{ABP maximum principle}}}

\begin{figure}
    \centering
    \includegraphics[width=0.8\textwidth]{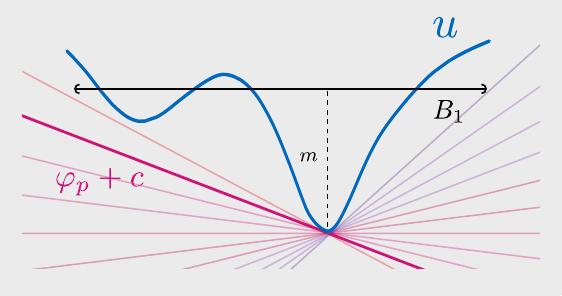}
    \caption{For the ABP maximum principle we consider the hyper-planes that support the graph of $u$ from below. Any plane with slope less than $m/2$ can be slid from below until it touches the graph of $u$.}
    \label{fig:abp5}
\end{figure}

\noindent\textbf{1.} The geometric idea is illustrated in Figure \ref{fig:abp5}. For $m := \max_{\overline{B_1}}u_-$ and $T=Du\colon A\to \R^n$ we get that $B_{m/2}\ss T(A)$. The Jacobian of $T$ for $x_0\in A$ can be estimated by using that if $e \in \operatorname{eig}(D^2u(x_0))$ then $0\leq e\leq \l^{-1}f(x_0)$. Therefore, $0\leq \det(DT)\leq Cf^n$ in $A$.

Details can be consulted in \cite[Chapter 3]{MR1351007}.

\noindent\textbf{2.} Let $d := \dist(x_0,\p\W)$ and $\theta\in\p B_1$ such that $d\theta \in \p B_{d}(x_0)\cap \p\W$. Consider the mapping as above using the set of slopes in the convex envelope of
\[
B_{|u(x_0)|/\diam(\W)} \cup \{\tfrac{|u(x_0)|}{d}\theta\}.
\]

Details can be consulted in \cite[Theorem 2.8]{MR3617963}.

\noindent\hypertarget{sol:envelope}{\hyperlink{ex:envelope}{\textbf{Inf/Sup-convolutions.}}}

\noindent\textbf{1.} Follows from $u-\varphi^{1/\e_2}_{y_0}\leq u-\varphi^{1/\e_1}_{y_0}$ and $\varphi_{y_0}^{1/\e}\leq 0$.

\noindent\textbf{2.} Let $M := \osc_{\overline{B_r(x_0)}}u <\8$, and $\w$ is a modulus of continuity for $u$ over $\overline{B_r(x_0)}$.

For $\r := \sqrt{2M\e} < r$ and $y_0 \in B_{r-\r}(x_0)$, the infimum that defines $u_\e$ is actually a minimum that is achieved over $\overline{B_\r(y_0)}$. Let
\[
x_0 \in \argmin\{(u-\varphi_{y_0}^{1/\e})(x)\ | \ x\in \overline{B_\r(y_0)}\}.
\]
Then
\[
0 \leq u(y_0)-u_\e(y_0)  = u(y_0)-u(x_0) - \frac{1}{2\e}|x_0-y_0|^2 \leq u(y_0) - u(x_0) \leq \w(\r) = \w(\sqrt{2M\e}).
\]

\noindent\textbf{3.} One can notice that $x \mapsto u_\e(x) + |x|^2/(2\e)$ is the infimum of affine functions. Alternatively one can show that every point in the graph of $u_\e$ has a supporting paraboloid of the form $\varphi_{x_0}^{-1/\e}+C$ from above, see Figure \ref{fig:dual_par}.

\begin{figure}
    \centering
    \includegraphics[width=0.8\textwidth]{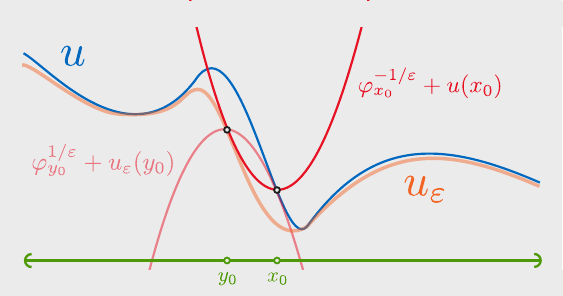}
    \caption{The paraboloid $\varphi_{x_0}^{-1/\e}+u(x_0)$ must be tangent to $u_\e$ from above. Otherwise we would be able to find at least one vertex of the form $(y_1,u_\e(y_1))$ strictly on top of the graph of $\varphi_{x_0}^{-1/\e}+u(x_0)$. However, by symmetry of the paraboloids, we observe that in such case the paraboloid $\varphi_{y_1}^{1/\e}+u_\e(y_1)$ must be larger than $u$ at $x_0$, which contradicts the definition of $u_\e(y_1)$.}
    \label{fig:dual_par}
\end{figure}

\noindent\textbf{4.} Follows from $\varphi_{y_0}^{\a_1}+\varphi_{y_0}^{\a_2} = \varphi_{y_0}^{\a_1+\a_2}$.

\noindent\textbf{5.} The largest $C\in\R$ for which $\varphi^{1/\e}_{y_0}+C\leq u$ is given by $u_\e(y_0)$.

\noindent\textbf{6.} $x\in \argmin(u-\varphi^{1/\e}_{y_0})$ if and only if there exists $C\in \R$ such that $\varphi^{1/\e}_{y_0}(x)+C = u(x)$, hence $\G_\e(x)=u(x)$.

\noindent\hypertarget{sol:abp_drift}{\hyperlink{ex:abp_drift}{\textbf{Measure estimates for operators with a drift term}}}

\noindent\textbf{1. and 2.} (Measure Estimate and Localization with Small Drift) Notice that in the proof of Lemma \ref{lem:abp} and Lemma \ref{lem:loc} we get that the contact set is contained in a set where $|Du|$ is universally bounded. Hence, the drift term can be absorbed once we assume that $\|b\|_{L^n(B_1)}$ is sufficiently small.

\noindent\textbf{3.} (ABP Maximum Principle) Let $m = \max_{\overline{B_1}}u_-$ and consider for $r\in(0,1/2]$, the contact sets
\[
A_r := \bigcup_{p \in B_{rm}\sm B_{rm/2}} \argmin(u-\varphi_p).
\]
Show then that for some $C>1$, independent of $r$,
\[
1 \leq C(\|b\|^n_{L^n(A_r)} + (rm)^{-n}\|f\|_{L^n(A_r)}^n).
\]
Add then the results for $r=1/2,1/4,\ldots,1/2^k$ such that $k = \lfloor2C\L^n\rfloor+1$, and use that $\|b\|_{L^n(B_1)} \leq \L$.

The proof in \cite[Theorem 2.21]{MR2777537} also relies on giving different weights to the contact sets $A_r$

\noindent\textbf{4.} (Localized Measure Estimate) The idea is to show that some $\overline{B_\e(x_0)}\ss B_{\r}$ such that $\|b\|_{L^n(B_\e(x_0))}$ is sufficiently small and one has that
    \[
    \inf_{B_{1/2}}u \leq 1 \qquad\Rightarrow\qquad \inf_{B_\e(x_0)} u \leq M.
    \]
After doing this, the results in Parts 1 and 2 of this problem conclude the argument.

Details can be consulted in \cite[Proposition 6.1]{MR3295593}.

\noindent\hypertarget{sol:par_eq1}{\hyperlink{ex:par_eq1}{\textbf{Measure estimates for parabolic equations}}}

\begin{figure}
    \centering
    \includegraphics[width=0.8\textwidth]{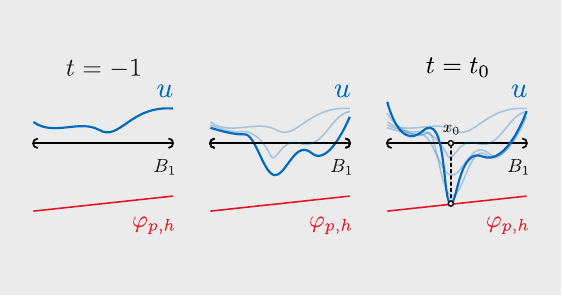}
    \caption{The test functions are planes of the form $x\mapsto p\cdot x-h$ that at time $t=-1$ are below $u$. We bring them form the past until they touch the graph of $u$ at some time $t_0$ and some point $x_0$.}
    \label{fig:tso}
\end{figure}

\noindent\textbf{1.} (ABP Maximum Principle) The test functions and contact set are illustrated in the Figure \ref{fig:tso}. Let $T\colon A\to \R^n\times \R$ such that $(p_0,h_0) = T(x_0,t_0)$ if and only if $u \geq \varphi_{p_0,h_0}$ in $B_1\times(-1,t_0]$, with equality at $(x_0,t_0)$. This means that $T(x,t) = (Du(x,t),Du(x,t)\cdot x-u(x,t))$.

For $m := \max_{\overline{B_1}\times[-1,0]}u_-$ we get that
\[
\{(p,h) \in \R^n\times \R \ | \ p \in B_{h/2}, h \in (0,m/2)\} \ss T(A).
\]
The Jacobian of $T$ is computed by
\[
\det(D_{x,t}T) = \det \begin{pmatrix}
    D^2u & D(\p_tu)\\
    x^TD^2u & D(\p_tu)\cdot x - \p_tu
\end{pmatrix} = -\p_t u\det(D^2u).
\]
For $(x_0,t_0) \in A$ and $e\in \operatorname{eig}(D^2u(x_0,t_0))$
\[
0 \leq \l e \leq \cM_{\l,\L}^-(D^2u(x_0,t_0)) \leq \p_t u(x_0,t_0)+f(x_0,t_0)\leq f(x_0,t_0).
\]
Therefore $0\leq \det(D_{x,t}T)\leq Cf^{n+1}$ over the contact set.

Details can be consulted in \cite{MR0790223}.

\noindent\textbf{2.} (Measure estimate) Let $\varphi_{y_0,s_0}(x,t) := (t-s_0) - 2|x-y_0|^2$ for $(y_0,s_0)\in \R^n\times \R$. Notice that $\varphi_{0,-1}(0,0)=1$ and $\varphi_{0,-1}<0$ in $\p_{\text{par}}(B_1\times(-1,0])$. For some $\r \in(0,1)$ sufficiently small and $(y_0,s_0)\in B:= B_\r\times [-1,-1+\r)$, we still have that $\varphi_{y_0,s_0}<0$ in $\p_{\text{par}}(B_1\times(-1,0])$ and $\theta := \min_{(y_0,s_0)\in B}\varphi_{y_0,s_0}(0,0)>0$.

If $u$ is non-negative and $u(0,0)\leq \theta$, then for every $(y_0,s_0)\in B$, the graph of $\varphi_{y_0,s_0}$ must eventually touch the graph of $u$ from below at some point in $B_1\times(-1,0]$. Consider in this way the contact set
\begin{align*}
A := \{(x_0,t_0) \in B_1\times(-1,0] \ | \ &\text{For some $(y_0,s_0) \in B$,}\\
&\text{$u > \varphi_{y_0,s_0}$ in $B_1\times(-1,t_0)$,}\\
&\text{and $u(x_0,t_0) = \varphi_{y_0,s_0}(x_0,t_0)$}\},
\end{align*}
and the surjective map $T\colon A\to B$ such that $(y_0,s_0) = T(x_0,t_0)$ if and only if $u> \varphi_{y_0,s_0}$ in $B_1\times(-1,t_0)$, with equality at $(x_0,t_0)$. This means that 
\[
T = \1x+\frac{1}{4}Du,t-u-\frac{1}{8}|Du|^2\2,
\]
and its Jacobian is
\[
\det(D_{x,t}T) = \11-\p_t u\2\det\1I+\frac{1}{4}D^2u\2.
\]
Using the uniform ellipticity we get that
\[
0 \leq \det(D_{x,t}T) \leq C(1+f^{n+1}).
\]

\begin{figure}
    \centering
    \includegraphics[width=0.8\textwidth]{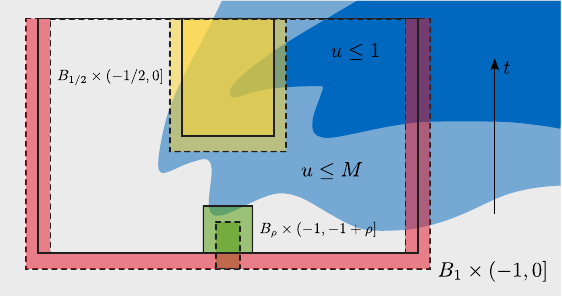}
    \caption{The function $\varphi$ is negative in (a parabolic neighborhood of) $\p_{\text{par}}(B_1\times (-1,0])$ (red region), positive in (a parabolic neighborhood of) $B_{1/2}\times(-1/2,0]$ (yellow region), and satisfies $\p_t \varphi - \mathcal M^-_{\l,\L}(D^2\varphi)< 0$ in (a parabolic neighborhood of) $B_1\times (-1,0] \sm B_{\r}\times(-1,-1+\r]$ (complement of the green region). We leave some room so that for every $(y_0,s_0) \in B_{\r/2}\times[0,\r/2)$, the translations $\varphi_{y_0,s_0}(x,t) := \varphi(x-y_0,t-s_0)$ keep similar properties.}
    \label{fig:par_loc}
\end{figure}

\noindent\textbf{3.} (Localization) The argument is an adaptation of the Lemma \ref{lem:loc}. It now requires to design a test function $\varphi=\varphi(x,t)\colon B_{1+\r/2}\times(-1-\r/2,0]\to \R$ such that
\[
\begin{cases}
    \varphi < 0 \text{ in } B_{1+\r/2}\times(-1-\r/2,0] \sm B_{1-\r/2}\times(-1,0],\\
    \varphi > 0 \text{ in } \overline{B_{1/2+\r/2}}\times[-1/2 - \r/2,0],\\   
    \p_t \varphi - \mathcal M^-_{\l,\L}(D^2\varphi) < 0 \text{ in } B_{1+\r/2}\times(-1-\r/2,0]\sm B_{\r/2}\times(-1-\r/2,-1+\r/2].
\end{cases}
\]
Notice that these characteristics resemble the backwards fundamental solution centered at $(0,-1)$ (Figure \ref{fig:par_loc}). This problem can be reduced to an ODE assuming that $\varphi$ is a truncation of a function of the form $(t+1)^{-\a}\psi(|x|^2/(t+1))$. See \cite[Lemma 4.16]{imbert:hal-00798300} for further details.

\noindent\textbf{4.} (Localized measure estimate) Follows by combining the previous parts.

\noindent\hypertarget{sol:w2p}{\hyperlink{ex:w2p}{\textbf{Measure estimates for the Hessian I}}}

\noindent\textbf{1.} We can prove it in two steps as in Lemma \ref{lem:abp} followed by Lemma \ref{lem:loc}.

Let $x_0 \in A^1_{\{0\}}\cap \overline{B_{1/2}}$, and consider for $y_0 \in B_{1/4}(x_0)$
\[
\varphi_0^1 + (\varphi_{y_0}^{1} + 1/32) = \varphi_{y_0/2}^{2} + c_{y_0}.
\]
For $y_0 \in B_{1/4}(x_0)$ we have that $\varphi_{y_0}^{1}(x_0) + 1/32>0$ and $\varphi_{y_0}^{1} + 1/32<0$ in $\R^n\sm B_{1/2}(x_0)$ (Figure \ref{fig:abp7}). Therefore
\[
\emptyset \neq \argmin(u-\varphi_{y_0/2}^{2}) \ss B_{1/2}(x_0).
\]
From this one can show that $|A^2_{\R^n}\cap B_1|\geq \eta$.

\begin{figure}
    \centering
    \includegraphics[width=0.8\textwidth]{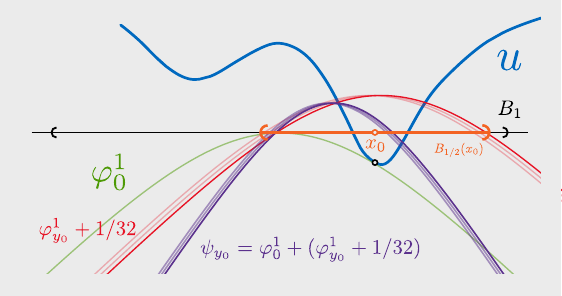}
    \caption{Assume without loss of generality that $\varphi_0^1$ touches $u$ from below at $x_0\in B_{1/2}$. For $y_0 \in B_{1/4}(x_0)$, the paraboloids $\varphi_{y_0}^{1} + 1/32$ are given such that $u$ must cross the sum $\psi_{y_0} := \varphi_0^1 + (\varphi_{y_0}^{1} + 1/32)$. Indeed $\psi_{y_0}(x_0) > \varphi_0^1(x_0) = u(x_0)$ and $\psi_{y_0} < \varphi_0^1 \leq u$ in $\R^n\sm B_{1/2}(x_0)$.}
    \label{fig:abp7}
\end{figure}

The localization lemma can be shown modifying the test function from Lemma \ref{lem:loc}. instead of truncating it with a constant we can cap it with a paraboloid that we fit in a $C^{1,1}$ fashion.  

See \cite[Lemma 7.6]{MR1351007} for an alternative proof using the ABP maximum principle.

\noindent\hypertarget{sol:par_eq2}{\hyperlink{ex:par_eq2}{\textbf{Covering lemmas for parabolic equations}}}

\noindent\textbf{1.} (Staking lemma) We can reduce the problem to the case of zero spatial dimension by using Fubini's Theorem. Consider then $\mathcal I$ to be a finite set of disjoint open intervals in $(-1,0)$. For every $I = (t-r,t)$ and $m\geq 1$, let $I^m := (t,t+mr)$. Then we have to show that
\[
    \left|\bigcup_{I \in \mathcal I} I^m\right| \geq \frac{m}{m+1}\left|\bigcup_{I\in \mathcal I} (I^m\cup I)\right|.
\]

Let $T$ be a mapping between intervals such that for $I=(a,b)$,
\[
T(I):=(a-(b-a)/m,b).
\]
In particular, $T(I^m)=I^m\cup I$. Extend $T$ to unions of disjoint open intervals $E = \bigcup I$ such that $T(E) = \bigcup T(I)$. We still have that the length of $T(E)$ is less than or equal to $(m+1)/m$ times the length of $E$.

On the other hand, any open set of $\R$ can be decomposed in a unique way as a numerable union of disjoint open intervals. If the open set $E = I_1\cup\ldots\cup I_N$ is given as a finite union of non-necessarily disjoint intervals, we get by an inductive argument that $T(E) \supseteq T(I_1)\cup\ldots\cup T(I_N)$.

Recover the desired estimate by applying the previous observations to $E = \bigcup_{I \in \mathcal I} I^m$
\[
\frac{m+1}{m}\left|\bigcup_{I \in \mathcal I} I^m\right| \geq \left|T\1\bigcup_{I \in \mathcal I} I^m\2\right| \geq \left|\bigcup_{I \in \mathcal I} T(I^m)\right| = \left|\bigcup_{I \in \mathcal I} (I^m\cup I)\right|.
\]

\noindent\textbf{2.} (Construction for the parabolic growing ink-spot lemma) The idea is illustrated in the Figure \ref{fig:par_ink}.

\begin{figure}
    \centering
    \includegraphics[width=0.8\textwidth]{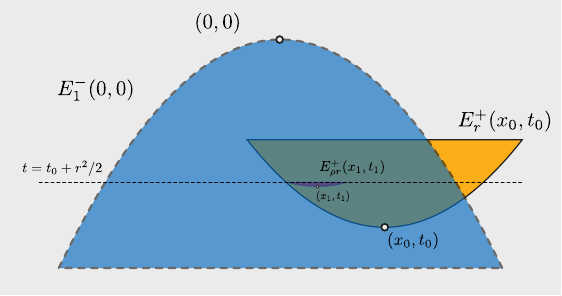}
    \caption{The slice of $E^+_r(x_0,t_0)$ at $t=t_0+r^2/2$ is the ball $B_{r_1}(x_0)$ with $r_1=r/\sqrt{2}$. Meanwhile by convexity, the segment between $(0,0)$ and $(x_0,t_0)$ meets $t=t_0+r^2/2$ in the smaller ball $B_{r_0}(x_0)$ with $r_0=r/2$. Let $\theta := -x_0/|x_0|$ if $x_0\neq 0$, otherwise let $\theta =0$. For $x_1 = \frac{r_0+r_1}{2}\theta$, $\r=\frac{1/\sqrt{2}+1/2}{2}$, and $t_1 = t_0+r^2/2-\r^2r^2$ it can be shown that $E^+_{\r r}(x_1,t_1) \ss E^+_r(x_0,t_0)\cap E^-_1(0,0)$.}
    \label{fig:par_ink}
\end{figure}

\noindent\hypertarget{sol:w2p2}{\hyperlink{ex:w2p2}{\textbf{Measure estimates for the Hessian II}}}

\noindent\textbf{1.} The set $\{m(f^n) > \d^n\}$ is considered because we need to have that the $L^n$-norm of the operator remains small after a rescaling of the form $u_r(x) = r^{-2}u(rx+x_0)$. In fact, if $x_1 \in \{m(f^n)\leq \d\}\cap Q_r(x_0)$, then $f_r(x) := (\cM^-_{\l,\L}(D^2u_r(x)))_+ = f(rx+x_0)$ satisfies
\[
\fint_{B_{3\sqrt n}} |f_r|^n =  \fint_{Q_{3r\sqrt n}(x_0)} |f|^n \leq C\fint_{Q_{7r\sqrt{n}/2}(x_1)} |f|^n \leq C\d^n.
\]

After this observation, the idea is to use the result from \hyperlink{ex:w2p}{Problem 12}.

\noindent\textbf{2.} Assume without loss of generality that $\|u\|_{L^\8(B_{3\sqrt n})}\leq 1$ and $\|f\|_{L^n(B_{3\sqrt n})}\leq \d$. The previous part can be used to show the following diminish of the distribution
\begin{align*}
|Q_1\sm A^{M^{k+1}}_{\R^n}| &\leq (1-\eta)|(Q_1\sm A^{M^{k+1}}_{\R^n})\cup \{m(f^n) > (\d M^k)^n\}|\\
&\leq (1-\eta)^{k+1} + \sum_{i=1}^{k} (1-\eta)^{}|\{m(f^n) > (\d M^{i})^n\}|
\end{align*}
Finally use the Hardy–Littlewood maximal inequality to bound
\[
|\{m(f^n) > (\d M^i)^n\}| \leq \frac{C}{(\d M^i)^n}\d^n = CM^{-in}.
\]

Details can be consulted in \cite[Lemma 7.7 and Lemma 7.8]{MR1351007}.

\noindent\hypertarget{sol:fl}{\hyperlink{ex:fl}{\textbf{Fractional Laplacian}}}

\noindent\textbf{1.} (Comparison principle) Follows from the facts that $u\leq v$ with $u(x_0)=v(x_0)$ implies
\[
u(y+x_0)+u(y-x_0)-2u(x_0) \leq v(y+x_0)+v(y-x_0)-2v(x_0).
\]

\noindent\textbf{2.} (Weak Harnack) Let $\b \in C^\8_c(B_{5/8})$ be a radial bump function taking values between $0$ and $1$, and such that $\b = 1$ in $B_{1/2}$. Assume by contradiction that for some $\theta\in(0,1)$, the graph of $\theta\b$ touches the graph of $u$ from below at $x_0 \in B_{5/8}$. Consider now the function $v = \theta\b + \mathbbm 1_{B_1\sm B_{3/4}} u$, see Figure \ref{fig:frac_lap}.

\begin{figure}
    \centering
    \includegraphics[width=0.8\textwidth]{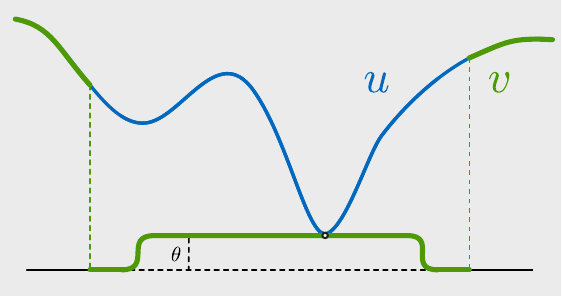}
    \caption{The function $v$ is defined piece-wise as a small bump that touches $u$ from below in $B_{5/8}$, and it is equal to $u$ outside $B_{3/4}$. Then the comparison principle between $u$ and $v$ leads to the desired measure estimate.}
    \label{fig:frac_lap}
\end{figure}

Then from the comparison principle we get
\[
1 \geq \D^{\s/2} u(x_0) \geq \D^{\s/2} v(x_0) \geq \theta\D^{\s/2} \b(x_0) + C\int_{B_1\sm B_{3/4}}u \geq -C + C\int_{B_1\sm B_{3/4}}u.
\]
This provides a contradiction once we assume that $\int_{B_1\sm B_{3/4}}u$ is sufficiently large.

\noindent\textbf{3.} (Hölder estimate) The challenge in the implementation of the diminish of oscillation is the non-local character of the equation. Even if we assume that $u$ takes values between $0$ and $1$ over $B_1$, the weak Harnack estimate we showed requires $u$ to be positive throughout $\R^n$.

First, we can truncate the tail of the solution by considering instead $v = \mathbbm 1_{B_1}u$, this introduces an additional forcing term to the right-hand side, bounded by a multiple of $\pm \int_{\R^n} |u|/(1+|y|^{n+\s})dy$.

We assume in an inductive way and without loss of generality that $u$ satisfies for some small $\a,\d\in (0,1)$
\[
\begin{cases}
    |u|\leq (1+\d)|x|^\a \text{ in } \R^n\sm B_1,\\
    |u| \leq 1 \text{ in } B_1,\\
    \|(-\D)^\s u\|_{L^\8(B_1)} \leq \d.
\end{cases}
\]
Then we consider whether $u$ is positive or negative at least half of the measure in $B_1\sm B_{3/4}$.

If we have $|\{u\geq 0\}\cap(B_1\sm B_{3/4})|\geq |B_1\sm B_{3/4}|/2$, then we translate and truncate the function as $v = (u+1)_+$. For $x\in B_{3/4}$, we have that $\D^{\s/2}v(x)$ is bounded by $\D^{\s/2}u(x)$ and a small term
\[
\D^{\s/2}v(x) \leq \D^{\s/2}u(x) + C\int_{1/4}^\8 \frac{(1+\d)\r^\a-1}{\r^{1+\s}}d\r.
\]
The second term can be made as small as needed if $\a$ and $\d$ are sufficiently small. Then, by the weak Harnack estimate, there is $\theta\in(0,1)$ such that $u+1 = v\geq \theta>0$ in $B_{1/2}$.

We need to focus on the relation between the constants $\a$, $\d$, and $\theta$, so that after the rescaling, the inductive step holds. Assume that we improved the lower bound as in the previous paragraph. Then we consider $v(x) = (u(x/2)-\theta/2)/(1-\theta/2)$ so that $|v|\leq 1$ in $B_1$. To have $|v|\leq (1+\d)|x|^\a$ in $B_1\sm B_2$, we need $(2-\theta-2^{1-\a})2^{\a+1}\geq \theta(2-\theta)$. This can be achieved by choosing $\theta$ sufficiently small in terms of $\a$.

Details can be consulted in \cite{MR2244602}.

\noindent\hypertarget{sol:int_grad_est}{\hyperlink{ex:int_grad_est}{\textbf{Interior gradient estimates}}}

\noindent\textbf{1.} Follows as in the proof of Corollary \ref{cor:high_ord_est}.

\noindent\hypertarget{sol:lmp2}{\hyperlink{ex:lmp2}{\textbf{Local maximum principle - uniformly elliptic}}}

\noindent\textbf{1.} Follows as in the \hyperlink{sol:lmp}{solution} of \hyperlink{ex:lmp}{Problem 6}.

\noindent\hypertarget{sol:harnack2}{\hyperlink{ex:harnack2}{\textbf{Harnack Inequality - uniformly elliptic}}}

\noindent\textbf{1.} Follows by combining the weak Harnack inequality and the local maximum principle. Details can be consulted in \cite[Section 4.2]{MR1351007}

\noindent\hypertarget{sol:hess_est_convx}{\hyperlink{ex:hess_est_convx}{\textbf{Interior Hessian estimates for convex operators}}}

\noindent\textbf{1.} $v := -v_2$ satisfies $\mathcal M^+_{\l,\L}(D^2v) \geq 0$, then by the local maximum principle we just need to provide a bound for $\|v\|_{L^\e(B_1)}$. This follows by the measure estimate for the Hessian (\hyperlink{ex:w2p2}{Problem 15}) with a slightly larger exponent.

\noindent\textbf{2.} Notice that $v_A$ can be computed as the following limit
\[
v_A(x) = \lim_{\e\to0} \frac{C}{\e^2}\fint_{B_\e}(u(x+Ay)-u(x))dy.
\]
Given that $u$ satisfies a convex equation,we can obtain an equation for the average $\fint_{B_\e}u(x+Ay)dy$.

\noindent\textbf{3.} Assume by contradiction that for some projection $P_0$, we have $\inf_{B_{1/2}} v_{P_0}\leq -(1-\theta)$ for some $\theta\in(0,1)$ to be fixed sufficiently small.

Given that $v_{P_0}\geq -1$ satisfies $\mathcal M_{\l,\L}^-(D^2v_{P_0})\leq 0$, we obtain by the weak Harnack inequality that $|\{v_{P_0}>-1+\theta^{1/2}\}\cap B_{1/4}|\leq C\theta^{\e/2}$.

In the complementary set $E:= \{v_{P_0}\leq -1+\theta^{1/2}\}\cap B_{1/4}$ we have
\[
0 \leq \underbrace{\max_{\substack{P \in \R^{n\times n}_{\text{sym}}\\P^2=P}} v_P}_{v_+} - v_{I-P_0} = v_{P_0}-\underbrace{\min_{\substack{P \in \R^{n\times n}_{\text{sym}}\\P^2=P}} v_P}_{-v_-} \leq \theta^{1/2}.
\]
By uniform ellipticity and $F(0)=0$,
\[
0\leq \mathcal M^+_{\l,\L}(D^2u) = \L v_+ - \l v_-.
\]
Then, for $\theta$ sufficiently small, we get that in $E$
\[
v_{I-P_0}\geq v_+ - \theta^{1/2} \geq \frac{\l}{\L}v_- - \theta^{1/2} \geq \frac{\l}{\L}(1-\theta^{1/2}) - \theta^{1/2} \geq \frac{\l}{2\L}.
\]

However, $w := \l/(2\L) - v_{I-P_0}$ satisfies $\mathcal M^+_{\l,\L}(D^2w)\geq0$, and then by the local maximum principle we obtain the following contradiction (for $\theta$ chosen even smaller)
\[
\frac{\l}{2\L} = w(0) \leq C\|w_+\|_{L^1(B_{1/4})} \leq C \theta^{\e/2}.
\]

Details can be consulted in \cite{MR2550191}.

\noindent\hypertarget{sol:imbsil}{\hyperlink{ex:imbsil}{\textbf{Imbert-Silvestre's approach}}}

\noindent\textbf{1.} Details can be consulted in \cite[Proposition 3.3]{MR3500837}.

\bibliographystyle{alpha}
\bibliography{mybibliography}

\end{document}